 \numberwithin{equation}{section}
\newtheorem{theorem}{Theorem}[section]
\newtheorem{corollary}[theorem]{Corollary}
\newtheorem{lemma}[theorem]{Lemma}
\theoremstyle{definition}
\newtheorem{definition}[theorem]{Definition}
\theoremstyle{remark}
\newtheorem{remark}[theorem]{Remark}
\newtheorem{hypothesis}[theorem]{Hypothesis}
\chardef\bslash=`\\ 
\newcommand{\wh}{\widehat}
\newcommand{\whA}{T}
\newcommand{\whB}{T_{\cB}}
\newcommand{\whBo}{T_{\cB_0}}
\newcommand{\dA}{{\dot A}}
\newcommand{\bbR}{{\mathbb{R}}}
\newcommand{\bbC}{{\mathbb{C}}}
\newcommand{\ran}{\text{\rm{Ran}}}
\newcommand{\dom}{\text{\rm{Dom}}}
\newcommand{\calH}{{\mathcal H}}
\newcommand{\calK}{{\mathcal K}}
\newcommand{\calM}{{\mathcal M}}
\newcommand{\calR}{{\mathcal R}}
\newcommand{\linspan}{\mathrm{lin\ span}}
\newcommand{\mM}{\mathfrak M}
\renewcommand{\Im}{\text{\rm Im}}
\def\sD{{\mathfrak D}}
\def\sM{{\mathfrak M}}   \def\sN{{\mathfrak N}}
\def\bA{{\mathbb A}}   \def\dB{{\mathbb B}}   \def\dC{{\mathbb C}}
      \def\dR{{\mathbb R}}
   \def\cB{{\mathcal B}}   
   \def\cH{{\mathcal H}}   
      \def\cL{{\mathcal L}}
\def\RE{{\rm Re\,}}
\def\Ker{{\rm Ker\,}}
\def\wh{\hat}
\def\uphar{{\upharpoonright\,}}
\DeclareMathOperator{\IM}{Im}
\newcommand{\eval}[2][\right]{\relax
  \ifx#1\right\relax \left.\fi#2#1\rvert}
\begin{document}

\title[A system coupling and the Donoghue classes]{A system coupling and Donoghue classes of Herglotz-Nevanlinna functions}

\author[Belyi]{S. Belyi}
\address{Department of Mathematics\\ Troy State University\\
Troy, AL 36082, USA
}
\curraddr{}
\email{sbelyi@troy.edu}

\author[Makarov]{K. A. Makarov}
\address{Department of Mathematics\\
 University of Missouri\\
  Columbia, MO 63211, USA}
\email{makarovk@missouri.edu}

\author[Tsekanovskii]{E. Tsekanovski\u i}
\address{Department of Mathematics,\\ Niagara University, NY
14109, USA}
\email{tsekanov@niagara.edu}

\subjclass{Primary: 81Q10, Secondary: 35P20, 47N50}


\keywords{L-system, transfer function, impedance function,  Herglotz-Nevanlinna function, Weyl-Titchmarsh function, Liv\v{s}ic function, characteristic function,
Donoghue class, symmetric operator, dissipative extension, von Neumann parameter.}

\begin{abstract}  We study  the impedance functions of conservative L-systems with the unbounded main operators. In addition to the  generalized Donoghue class $\sM_\kappa$ of Herglotz-Nevanlinna functions considered   by the authors earlier, we introduce  ``inverse" generalized Donoghue classes $\sM_\kappa^{-1}$ of functions satisfying  a different normalization condition on the generating measure, with  a criterion for the impedance function $V_\Theta(z)$ of an L-system $\Theta$ to belong to the class $\sM_\kappa^{-1}$  presented.
 In addition, we establish a connection between ``geometrical" properties of two L-systems whose impedance functions belong to the classes $\sM_\kappa$ and $\sM_\kappa^{-1}$, respectively.
In the second part of the paper we introduce  a coupling of two L-system  and show that if the impedance functions of two L-systems belong to the generalized Donoghue classes $\sM_{\kappa_1}$($\sM_{\kappa_1}^{-1}$) and $\sM_{\kappa_2}$($\sM_{\kappa_2}^{-1}$), then the impedance function of the  coupling falls into the class $\sM_{\kappa_1\kappa_2}$.
Consequently, we obtain that if an L-system whose impedance function belongs to the  standard  Donoghue class $\sM=\sM_0$ is coupled with any other L-system, the impedance function of the coupling belongs to $\sM$ (the absorbtion property).
Observing the result of coupling of $n$ L-systems as $n$ goes to infinity,
 we put forward the concept of a limit coupling which leads to the notion
of the system attractor, two models of which (in the position and momentum representations) are presented.
All major results are illustrated by  various examples.
\end{abstract}

\maketitle

\section{Introduction}\label{s1}

This article is a part of an ongoing project studying the connections between various subclasses of Herglotz-Nevanlinna functions and conservative realizations of L-systems (see \cite{AlTs2}, \cite{ABT}, \cite{BMkT}).

Let $T$ be a densely defined closed operator in a Hilbert space $\cH$ such that its resolvent set $\rho(T)$ is not empty. We also assume that
$\dom(T)\cap \dom(T^*)$ is dense and that the restriction $\dA=T|_{\dom(T)\cap \dom(T^*)}$ is a closed symmetric operator with finite and equal deficiency indices.
Let $\calH_+\subset\calH\subset\calH_-$ be the rigged Hilbert space associated with $\dot A$ (see Section \ref{s2}).

One of the main objectives  of the current paper is the study of the conservative \textit{L-system}
\begin{equation}
\label{col0}
 \Theta =
\left(%
\begin{array}{ccc}
  \bA    & K & J \\
   \calH_+\subset\calH\subset\calH_- &  & E \\
\end{array}%
\right),
\end{equation}
where the \textit{state-space operator} $\bA$ is a bounded linear operator from
$\calH_+$ into $\calH_-$ such that  $\dA \subset T\subset \bA$, $\dA \subset T^* \subset \bA^*$, $E$ is a finite-dimensional Hilbert space,
$K$ is a bounded linear operator from the space $E$ into $\calH_-$,   {and} $J=J^*=J^{-1}$ is a self-adjoint isometry  on $E$ such that the imaginary part of $\bA$ has a representation $\IM\bA=KJK^*$. Due to the facts that $\calH_\pm$ is dual to $\calH_\mp$ and  {that} $\bA^*$ is a bounded linear operator from $\calH_+$ into $\calH_-$, $\IM\bA=(\bA-\bA^*)/2i$ is a well defined bounded operator from $\calH_+$  into $\calH_-$.
Note that the main operator $T$ associated with the system $\Theta$ is uniquely determined by the state-space operator $\bA$ as its restriction on the domain $\dom(T)=\{f\in\calH_+\mid \bA f\in\calH\}$.

Recall that the operator-valued function  given by
\begin{equation*}\label{W1}
 W_\Theta(z)=I-2iK^*(\bA-zI)^{-1}KJ,\quad z\in \rho(T),
\end{equation*}
 is called the \textit{transfer function}  of the L-system $\Theta$ and
\begin{equation*}\label{real2}
 V_\Theta(z)=i[W_\Theta(z)+I]^{-1}[W_\Theta(z)-I] =K^*(\RE\bA-zI)^{-1}K,\quad z\in\rho(T)\cap(\dC\setminus \dR),
\end{equation*}
is called the \textit{impedance function } of $\Theta$.

The main  goal of the paper is to study L-systems  the impedance functions of which belong to the generalized Donoghue class
$\sM_\kappa^{-1}$
consisting  of all analytic mappings $M$ from $\bbC_+$ into itself  that admits the representation
$$
M(z)=\int_\bbR \left
(\frac{1}{\lambda-z}-\frac{\lambda}{1+\lambda^2}\right )
d\mu(\lambda),
$$
where $\mu$ is an  infinite Borel measure  such that
$$
\int_\bbR\frac{d\mu(\lambda)}{1+\lambda^2}=\frac{1+\kappa}{1-\kappa} \ge 1\quad  (0\le\kappa<1).
$$
Note that  L-systems with the impedance functions from the   generalized Donoghue class $\sM_\kappa$ have been studied earlier in   \cite{BMkT} (see eq.  \eqref{e-38-kap} below for the definition of the classes $\sM_\kappa$). A new twist in our exposition is introducing the concept  of a coupling of two L-systems associated with various generalized Donoghue classes followed by the study of analytic properties of the impedance functions of the coupling.

The paper is organized as follows.

In Section \ref{s2}  we recall the definition of an $L$-system and provide necessary background.

Section \ref{s4} is of auxiliary nature and it contains some basic facts about the Liv\v{s}ic function
associated with a pair $(\dot A, A)$ of a symmetric operator with deficiency indices $(1,1)$ and its self-adjoint extension $A$  as presented in \cite{Lv1}, \cite{Lv2}, \cite{MT-S}.

In Section \ref{s3} we present an explicit construction of two $L$-systems whose impedance functions belong to generalized Donoghue classes considered in Section \ref{s5}.

In Section \ref{s5}, following our development in \cite{BMkT}, we introduce yet another generalized Donoghue class of functions $\sM_\kappa^{-1}$ and establish a connection between ``geometrical" properties of two $L$-systems whose impedance functions belong to the classes $\sM_\kappa$ and $\sM_\kappa^{-1}$, respectively. Then (see Theorem \ref{t-22}) we present a criterion for the impedance function $V_\Theta(z)$ of an L-system $\Theta$ to be a member of the class $\sM_\kappa^{-1}$.

In Section \ref{s6} we introduce a concept  of a  coupling of two L-system (see Definition \ref{d6-7-2}) and  show (Theorem \ref{t6-7-3}) that the procedure of coupling serves as a serial connection  (the transfer function of the coupling is a simple product of the transfer functions of L-systems coupled). Moreover, it is proved  (Theorem \ref{t28}) that if the impedance functions of two L-systems belong to   the generalized Donoghue classes $\sM_{\kappa_1}$ and $\sM_{\kappa_2}$, then the impedance function of the constructed coupling falls into the class $\sM_{\kappa_1\kappa_2}$. An immediate consequence of this is the absorbtion property of the regular Donoghue class $\sM=\sM_0$: if an $L$-system whose impedance function belongs to the class $\sM$ is coupled with any other $ L$-system, the impedance function of the coupling belongs to $\sM$ (see Corollary \ref{c-29}). We also show that if the impedance functions of two L-systems belong to   the generalized Donoghue classes $\sM_{\kappa_1}^{-1}$ and $\sM_{\kappa_2}^{-1}$, then the impedance function of the coupling still falls into the class $\sM_{\kappa_1\kappa_2}$ (see Corollary \ref{c-19}). A similar result takes place if the impedance function of one L-system belongs to   the generalized Donoghue class $\sM_{\kappa_1}$ while the impedance function of the second L-system is from the class $\sM_{\kappa_2}^{-1}$. Then the coupling belongs to the class $\sM_{\kappa_1\kappa_2}^{-1}$ (see Corollary \ref{c-23}). The classes $\sM$, $\sM_k$ and $\sM_{k}^{-1}$ can be considered symbolically as classes of \textit{mass preserving}, \textit{mass decreasing} and \textit{mass increasing} classes respectively (see \eqref{hernev}, \eqref{e-38-kap}, \eqref{e-39-kap}, \eqref{e-73-remark}).

In Section \ref{s7} we put forward the concept of a limit coupling (see Definition \ref{d17}) and define  the system attractor,  two models of which (in the position and momentum representations) are discussed in  the end of the section.

We conclude the paper by providing several examples that illustrate all the main results and concepts.

For the sake of completeness, a functional model for a prime dissipative triple used in our considerations is presented in Appendix A.

\section{Preliminaries}\label{s2}

For a pair of Hilbert spaces $\calH_1$, $\calH_2$ we denote by $[\calH_1,\calH_2]$ the set of all bounded linear operators from $\calH_1$ to $\calH_2$. Let $\dA$ be a closed, densely defined, symmetric operator with finite equal deficiency indices acting on a Hilbert space $\calH$ with inner product $(f,g),f,g\in\calH$. Any operator $T$ in $\cH$ such that
\[
\dA\subset T\subset\dA^*
\]
is called a \textit{quasi-self-adjoint extension} of $\dA$.

 Consider the rigged Hilbert space (see \cite{Ber}, \cite{BT3})
$\calH_+\subset\calH\subset\calH_- ,$ where $\calH_+ =\dom(\dA^*)$ and
\begin{equation}\label{108}
(f,g)_+ =(f,g)+(\dA^* f, \dA^*g),\;\;f,g \in \dom(A^*).
\end{equation}
Let $\calR$ be the \textit{\textrm{Riesz-Berezansky   operator}} $\calR$ (see \cite{Ber}, \cite{BT3}) which maps $\mathcal H_-$ onto $\mathcal H_+$ such
 that   $(f,g)=(f,\calR g)_+$ ($\forall f\in\calH_+$, $g\in\calH_-$) and
 $\|\calR g\|_+=\| g\|_-$.
 Note that
identifying the space conjugate to $\calH_\pm$ with $\calH_\mp$, we
get that if $\bA\in[\calH_+,\calH_-]$, then
$\bA^*\in[\calH_+,\calH_-].$
An operator $\bA\in[\calH_+,\calH_-]$ is called a \textit{self-adjoint
bi-extension} of a symmetric operator $\dA$ if $\bA=\bA^*$ and $\bA
\supset \dA$.
Let $\bA$ be a self-adjoint bi-extension of $\dA$ and let the operator $\hat A$ in $\cH$ be defined as follows:
\[
\dom(\hat A)=\{f\in\cH_+:\hat A f\in\cH\}, \quad \hat A=\bA\uphar\dom(\hat A).
\]
The operator $\hat A$ is called a \textit{quasi-kernel} of a self-adjoint bi-extension $\bA$ (see \cite{TSh1}, \cite[Section 2.1]{ABT}).
According to the von Neumann Theorem (see \cite[Theorem 1.3.1]{ABT}) if $\wh A$ is a self-adjoint extension of $\dA$, then
\begin{equation}\label{DOMHAT}
\dom(\hat A)=\dom(\dA)\oplus(I-U)\sN_{i},
\end{equation}
where $U$ is a $(\cdot)$ (and $(+)$)-isometric operator from $\sN_i$ into $\sN_{-i}$
 and $$\sN_{\pm i}=\Ker (\dA^*\mp i I)$$ are the deficiency subspaces of $\dA$.
 A self-adjoint bi-extension $\bA$ of a symmetric operator $\dA$ is called \textit{t-self-adjoint} (see \cite[Definition 3.3.5]{ABT}) if its quasi-kernel $\hat A$ is
self-adjoint operator in $\calH$.
An operator $\bA\in[\calH_+,\calH_-]$  is called a \textit{quasi-self-adjoint bi-extension} of an operator $T$ if
$\bA\supset T\supset \dA$ and $\bA^*\supset T^*\supset\dA.$ In what follows we will be mostly interested in the following type of quasi-self-adjoint bi-extensions.
\begin{definition}[\cite{ABT}]\label{star_ext}
Let $T$ be a quasi-self-adjoint extension of $\dA$ with non\-empty
resolvent set $\rho(T)$. A quasi-self-adjoint bi-extension $\bA$ of an operator $T$ is called a \textit{($*$)-extension }of $T$ if $\RE\bA$ is a
t-self-adjoint bi-extension of $\dA$.
\end{definition}
Under the  assumption that $\dA$ has equal finite deficiency indices we say that a quasi-self-adjoint extension $T$ of $\dA$ belongs to the
\textit{class $\Lambda(\dA)$} if $\rho(T)\ne\emptyset$, $\dom(\dA)=\dom(T)\cap\dom(T^*)$, and hence  $T$ admits $(*)$-extensions. The description of
all $(*)$-extensions via Riesz-Berezansky   operator $\calR$ can be found in \cite[Section 4.3]{ABT}.

\begin{definition}
A system of equations
\[
\left\{   \begin{array}{l}
          (\bA-zI)x=KJ\varphi_-  \\
          \varphi_+=\varphi_- -2iK^* x
          \end{array}
\right.,
\]
 or an
array
\begin{equation}\label{e6-3-2}
\Theta= \begin{pmatrix} \bA&K&\ J\cr \calH_+ \subset \calH \subset
\calH_-& &E\cr \end{pmatrix}
\end{equation}
 is called an \textbf{{L-system}}   if:
\begin{enumerate}
\item[(1)] {$\mathbb  A$ is a   ($\ast $)-extension of an
operator $T$ of the class $\Lambda(\dA)$};
\item[(2)] {$J=J^\ast =J^{-1}\in [E,E],\quad \dim E < \infty $};
\item[(3)] $\IM\bA= KJK^*$, where $K\in [E,\calH_-]$, $K^*\in [\calH_+,E]$, and
$\ran(K)=\ran (\IM\bA).$
\end{enumerate}
\end{definition}
In the definition above   $\varphi_- \in E$ stands for an input vector, $\varphi_+ \in E$ is an output vector, and $x$ is a state space vector in
$\calH$.
The operator $\bA$  is called the \textit{state-space operator} of the system $\Theta$, $T$ is the \textit{main operator},  $J$ is the \textit{direction operator}, and $K$ is the  \textit{channel operator}. A system $\Theta$ in \eqref{e6-3-2} is called \textit{minimal} if the operator $\dA$ is a prime operator in $\calH$, i.e., there exists no non-trivial reducing
invariant subspace of $\calH$ on which it induces a self-adjoint operator.
An L-system $\Theta$ defined above is \textit{conservative} in the sense explained in \cite[Section 6.3]{ABT}. Such systems (open systems) with bounded operators were introduced by Liv\v{s}ic \cite{Lv2} and studied in \cite{Bro}, \cite{BrLv}, \cite{Lv1}, \cite{LiYa}.

We  associate with an L-system $\Theta$ the operator-valued function
\begin{equation}\label{e6-3-3}
W_\Theta (z)=I-2iK^\ast (\mathbb  A-zI)^{-1}KJ,\quad z\in \rho (T),
\end{equation}
which is called the \textbf{transfer  function} of the L-system $\Theta$. We also consider the operator-valued function
\begin{equation}\label{e6-3-5}
V_\Theta (z) = K^\ast (\RE\bA - zI)^{-1} K, \quad z\in\rho(\hat A).
\end{equation}
It was shown in \cite{BT3}, \cite[Section 6.3]{ABT} that both \eqref{e6-3-3} and \eqref{e6-3-5} are well defined. The transfer operator-function $W_\Theta (z)$ of the system
$ \Theta $ and an operator-function $V_\Theta (z)$ of the form (\ref{e6-3-5}) are connected by the following relations valid for $\IM z\ne0$, $z\in\rho(T)$,
\begin{equation}\label{e6-3-6}
\begin{aligned}
V_\Theta (z) &= i [W_\Theta (z) + I]^{-1} [W_\Theta (z) - I] J,\\
W_\Theta(z)&=(I+iV_\Theta(z)J)^{-1}(I-iV_\Theta(z)J).
\end{aligned}
\end{equation}
The function $V_\Theta(z)$ defined by \eqref{e6-3-5} is called the
\textbf{impedance function} of an L-system $ \Theta $ of the form
(\ref{e6-3-2}). The class of all Herglotz-Nevanlinna functions in a finite-dimensional Hilbert
space $E$, that can be realized as impedance functions of an L-system, was described in \cite{BT3}, \cite[Definition 6.4.1]{ABT}.

Two minimal L-systems
$$
\Theta_j= \begin{pmatrix} \bA_j&K_j&J\cr \calH_{+j} \subset\calH_j \subset \calH_{-j}& &E\cr
\end{pmatrix},\quad j=1,2,
$$
are called \textbf{bi-unitarily equivalent} \cite[Section 6.6]{ABT} if there exists a triplet of operators $(U_+, U, U_-)$ that isometrically maps the triplet $\calH_{+1}\subset\calH_1\subset\calH_{-1}$ onto the triplet $\calH_{+2}\subset\calH_2\subset\calH_{-2}$
such that $U_+=U|_{\calH_{+1}}$ is an isometry from $\calH_{+1}$ onto $\calH_{+2}$, $U_-=(U_+^*)^{-1}$ is an isometry from $\calH_{-1}$ onto $\calH_{-2}$, and
\begin{equation}\label{167}
UT_1=T_2U,  \quad U_-\bA_1=\bA_2 U_+,\quad U_-K_1=K_2.
\end{equation}
It is shown in \cite[Theorem 6.6.10]{ABT} that if the transfer functions $W_{\Theta_1}(z)$ and $W_{\Theta_2}(z)$ of two minimal L-systems $\Theta_1$ and $\Theta_2$ are the same on
$ z\in\rho(T_1)\cap\rho(T_2)\cap \dC_{\pm}\ne\emptyset$, then $\Theta_1$ and $\Theta_2$ are bi-unitarily equivalent.

\section{The Liv\v{s}ic function}\label{s4}

Another important object of interest and a tool that we use in this paper is the characteristic function of a symmetric
operator introduced by Liv\v{s}ic in \cite{L}.
 In \cite{MT-S}  two of the authors  (K.A.M. and E.T.) suggested to define a characteristic function  of a symmetric operator as well of its dissipative extension  as the one associated with  the pairs $(\dot A, A)$  and  $(T, A)$, rather than with the single operators $\dot A$ and $T$, respectively.
Following \cite{MT-S}, \cite{MT10} we   call the characteristic function associated with the pair $(\dot A, A)$ the \textit{Liv\v{s}ic function}.
 For a detailed treatment  of the   Liv\v{s}ic function and  characteristic function   we refer to \cite{MT-S} and references therein.
Suppose that  $A$ is a self-adjoint extension of a symmetric operator
$\dot A$ with deficiency indices $(1,1)$. Let
$g_\pm$  be deficiency elements $g_\pm\in \Ker (\dot A^*\mp iI)$,
$\|g_\pm\|=1$. Assume, in addition, that
\begin{equation}\label{star}
g_+-g_-\in \dom( A).
\end{equation}
Following \cite{MT-S},   the \textit{Liv\v{s}ic function} $s(\dot A, A)$  \emph{associated with the pair} $(\dot A, A)$ is
\begin{equation}\label{charsum}
s(z)=\frac{z-i}{z+i}\cdot \frac{(g_z, g_-)}{(g_z, g_+)}, \quad
z\in \bbC_+,
\end{equation}
 Here $g_\pm\in \Ker( \dA^*\mp iI)$ are normalized appropriately chosen deficiency elements and
$ g_z\ne 0$ are arbitrary  deficiency elements of the symmetric operators $\dot A$.
The Liv\v{s}ic function $s(z)$ is
a complete unitary invariant  of a pair $(\dot A, A)$ with a prime\footnote{A symmetric operator $\dot A$ is prime
if there does not exist a subspace invariant under $\dot A$ such that the restriction of $\dot A$ to this subspace is self-adjoint.},
 densely defined symmetric operator with deficiency indices $(1,1)$ symmetric operator $\dot A$ and its self-adjoint extension $A$.

Suppose also that $T \ne (T )^*$ is a  maximal dissipative extension of $\dot A$,
$$
\Im(T f,f)\ge 0, \quad f\in \dom(T ).
$$
Since $\dot A$ is symmetric, its dissipative extension $T$ is automatically quasi-self-adjoint \cite{ABT}, 
that  is,
$$
\dot A \subset T \subset \dA^*,
$$
and hence, according to \eqref{e4-60} with $\calK=\kappa$,
\begin{equation}\label{parpar}
g_+-\kappa g_-\in \dom
 (T )\quad \text{for some }
|\kappa|<1.
\end{equation}
Based on the  parametrization \eqref{parpar} of the domain of $T$ and following \cite{L}, \cite{MT-S}, the M\"obius transformation
\begin{equation}\label{ch12}
S(z)=\frac{s(z)-\kappa} {\overline{ \kappa }\,s(z)-1}, \quad z\in \bbC_+,
\end{equation}
where $s$ is given  by \eqref{charsum}, is called the \textbf{characteristic function} of the dissipative extension $T$ \cite{L}.
The characteristic function $S(z)$ of a dissipative (maximal) extension $T$ of a densely defined prime symmetric operator $\dot A$ is a complete unitary invariant of the triple $(\dot A, A, T )$ (see  \cite{MT-S}).

Recall that  Donoghue \cite{D}  introduced a concept of the Weyl-Titchmarsh function $M(\dot A, A)$ associated with a pair $(\dot A, A)$ by
$$M(\dot A, A)(z)=
((Az+I)(A-zI)^{-1}g_+,g_+), \quad z\in \bbC_+,
$$
$$g_+\in \Ker( \dA^*-iI),\quad \|g_+\|=1,
$$where $\dot A $ is
 a symmetric operator with deficiency indices $(1,1)$,
 and $A$ is
its self-adjoint extension.

Denote by $\mM$ the \textbf{Donoghue class} of all analytic mappings $M$ from $\bbC_+$ into itself  that admits the representation
 \begin{equation}\label{hernev}
M(z)=\int_\bbR \left
(\frac{1}{\lambda-z}-\frac{\lambda}{1+\lambda^2}\right )
d\mu,
\end{equation}
where $\mu$ is an  infinite Borel measure   and
$$
\int_\bbR\frac{d\mu(\lambda)}{1+\lambda^2}=1\,,\quad\text{equivalently,}\quad M(i)=i.
$$
It is known  \cite{D}, \cite{GMT}, \cite{GT}, \cite {MT-S} that $M\in \mM$ if and only if $M$
can be realized  as the Weyl-Titchmarsh function $M(\dot A, A)$ associated with a pair $(\dot A, A)$.

A standard relationship between the  Weyl-Titch\-marsh and the    Liv\v{s}ic functions associated with the  pair
$(\dA, A) $ was described in \cite {MT-S}. In particular, if we denote by $M=M(\dot A, A)$ and by $s=s(\dot A, A)$
  the Weyl-Titchmarsh function and the    Liv\v{s}ic function  associated with the pair $(\dot A, A)$, respectively,
then
\begin{equation}\label{blog}
s(z)=\frac{M(z)-i}{M(z)+i},\quad z\in \bbC_+.
\end{equation}

\begin{hypothesis}\label{setup} Suppose
that $\whA \ne\whA^*$  is  a maximal
dissipative extension of  a symmetric operator $\dot A$
 with deficiency indices $(1,1)$. Assume, in addition, that
$A$ is a  self-adjoint extension of $\dot A$. Suppose,  that the
deficiency elements $g_\pm\in \Ker (\dA^*\mp iI)$ are
normalized, $\|g_\pm\|=1$, and chosen in such a way that
\begin{equation}\label{ddoomm14}
g_+- g_-\in \dom ( A)\,\,\,\text{and}\,\,\,
g_+-\kappa g_-\in \dom (\whA )\,\,\,\text{for some }
\,\,\,|\kappa|<1.
\end{equation}
\end{hypothesis}

Under Hypothesis \ref{setup}, consider the characteristic
function $S=S( \dot A, \whA , A)$
associated with the triple of operators $( \dot A, \whA , A)$

\begin{equation}\label{ch1}
S(z)=\frac{s(z)-\kappa} {\overline{ \kappa }\,s(z)-1}, \quad z\in \bbC_+,
\end{equation}
 where $s=s(\dot A, A)$ is the Liv\v{s}ic function associated with the pair
$(\dot A, A)$.

We remark that given  a triple  $( \dot A, \whA , A)$, one can always find a
basis $g_\pm$ in the deficiency subspace
 $\Ker (\dA^*-iI)\dot +\Ker (\dA^*+iI)$,
$$\|g_\pm\|=1, \quad g_\pm\in (\dA^*\mp iI),
$$ such that
$$
g_+-g_-\in \dom (A)
\quad \text{
and}\quad
g_+-\kappa g_-\in \dom (\whA ),
$$
and then, in this case,
\begin{equation}\label{assa}
\kappa =S( \dot A, \whA , A)(i).
\end{equation}

It was pointed out in \cite{MT-S}, that  if $\kappa=0$, the quasi-self-adjoint extension $\whA $
coincides with the restriction of the adjoint operator $(\dot A)^*$ on
$$
\dom(\whA )=\dom(\dot A)\dot + \Ker (\dA^*-iI).
$$
and that the  prime triples $(\dot A, \whA , A)$ with
$\kappa=0$
 are  in one-to-one correspondence with the set of prime symmetric operators.
In this case, the characteristic function $S$
and the Liv\v{s}ic function $s$ coincide (up to a sign),
$$
S(z)=-s(z), \quad z\in \bbC_+.
$$

Similar constructions can be obtained if Hypothesis \ref{setup} is replaced with an ``anti-Hypothesis" as follows.
\begin{hypothesis}\label{setup-1} Suppose
that $\whA \ne\whA^*$  is  a maximal
dissipative extension of  a symmetric operator $\dot A$
 with deficiency indices $(1,1)$. Assume, in addition, that
$A$ is a  self-adjoint extension of $\dot A$. Suppose,  that the
deficiency elements $g_\pm\in \Ker (\dA^*\mp iI)$ are
normalized, $\|g_\pm\|=1$, and chosen in such a way that
\begin{equation}\label{ddoomm14-1}g_++ g_-\in \dom ( A)\,\,\,\text{and}\,\,\,
g_+-\kappa g_-\in \dom (\whA )\,\,\,\text{for some }
\,\,\,|\kappa|<1.
\end{equation}
\end{hypothesis}
\noindent

\begin{remark}\label{r-12}
Without loss of generality, in what follows we assume that $\kappa$ is real and $0\le\kappa<1$: if $\kappa=|\kappa|e^{i\theta}$,
change (the basis) $g_-$ to $e^{i\theta}g_-$ in the deficiency subspace  $\Ker (\dA^*+ i I)$. Thus, for the remainder of this paper we  assume that $\kappa$ is real and $0\le\kappa<1$.
\end{remark}
Both functions $s(z)$ and $S(z)$ can also be defined via \eqref{charsum} and \eqref{ch1} under the condition of Hypothesis \ref{setup-1}. It follows directly from the definition of the function $s(z)$ and \eqref{charsum}  that
$$
s(\dA,A_+)=-s(\dA,A_-),
$$
where $A_+$ and $A_-$ are two self-adjoint extensions of $\dA$ satisfying the first part of Hypotheses \ref{setup} and \ref{setup-1}, respectively. Similar relation takes place between the characteristic functions of the triplets $(\dot A,\whA ,A_+)$ and $(\dot A,\whA ,A_-)$, namely
\begin{equation}\label{e-49-1}
    S(\dA,T ,A_+)=-S(\dA,T ,A_-).
\end{equation}
Here the triplets $(\dot A,\whA ,A_+)$ and $(\dot A,\whA ,A_-)$ satisfy Hypotheses \ref{setup} and \ref{setup-1}, respectively. To prove \eqref{e-49-1} one substitutes $-s(z)$ and $-\kappa$ for $s(z)$ and $\kappa$ in the formula \eqref{ch1} under the assumption that $\kappa$ is real and $0<\kappa<1$.

\section{$(*)$-extensions as state-space operators of L-systems}\label{s3}

In this section we provide an explicit construction of two L-systems based upon given $(*)$-extensions that become state-space operators of obtained systems. We will show that the corresponding operators of these L-systems  satisfy the conditions of Hypotheses \ref{setup} and \ref{setup-1}, respectively.


Let $\dA$ be a densely defined closed symmetric operator with finite deficiency indices $(n,n)$.
In this case all  operators $T$ from the class $\Lambda(\dA)$ with $-i\in\rho(T)$
are of the form (see \cite[Theorem 4.1.9]{ABT}, \cite{TSh1})\footnote{For the convenience of the reader and to ease some further calculations we choose to have parameterizing operators $U$ and $\calK$ to carry the opposite signs in von Neumann's formulas \eqref{DOMHAT} and \eqref{e4-60}. As a result, formulas \eqref{e4-62}, \eqref{e3-39-new},  and \eqref{e-H1} slightly deviate from their versions that appear in \cite{BMkT}.}
\begin{equation}\label{e4-60}
    \begin{aligned}
\dom(T)&=\dom(\dA)\oplus(I-\calK )\sN_{i},\quad T=\dA^*\uphar\dom(T),\\
\dom(T^*)&=\dom(\dA)\oplus(I-\calK^*)\sN_{-i},\quad
T^*=\dA^*\uphar\dom(T^*),
    \end{aligned}
\end{equation}
where $\calK\in[\sN_i,\sN_{-i}]$. Let $\calM=\sN_i\oplus\sN_{-i}$ and $P^+$ be a $(+)$-orthogonal projection operator onto a corresponding subspace shown in its subscript.
 Then (see \cite{TSh1}, \cite{BMkT}) all quasi-self-adjoint bi-extensions of $T$ can be parameterized via  an  operator $H\in[\sN_{-i},\sN_i]$ as follows
\begin{equation}\label{e4-61}
  \bA=\dA^*+\calR^{-1} S_\bA P_\calM^+,\quad
\bA^*=\dA^*+\calR^{-1} S_{\bA^*} P_\calM^+,
  \end{equation}
where the block-operator matrices $S_{\bA}$ and $S_{\bA^*}$ are  of the form
\begin{equation}\label{e4-62}
\begin{aligned}
    S_\bA&=\left(
            \begin{array}{cc}
              H\calK & H \\
              \calK(H\calK+iI) & iI+\calK H \\
            \end{array}
          \right),\\
S_{\bA^*}&=\left(
            \begin{array}{cc}
              \calK^*H^*-iI & (\calK^*H^*-iI)\calK^* \\
              H^* & H^*\calK^* \\
            \end{array}
          \right).
\end{aligned}
\end{equation}
Let $T\in\Lambda(\dA)$, $-i\in\rho(T)$ and $A$ be a self-adjoint extension of $\dA$ such that $U$ defines $\dom(A)$ via \eqref{DOMHAT} and $\calK$ defines $T$ via \eqref{e4-60}. It was shown in \cite[Proposition 3]{BMkT} that  $\bA$ is a $(*)$-extension of $T$ whose real part $\RE\bA$ has the quasi-kernel $A$ if and only if $U\calK^*-I$ is invertible and the operator $H$ in \eqref{e4-62} takes the form
\begin{equation}\label{e3-39-new}
H=-i(I-\calK^*\calK)^{-1}[(I-\calK^*U)(I-U^*\calK)^{-1}-\calK^*U]U^*.
\end{equation}

For the remainder of this paper we will be interested in the case when the deficiency indices of $\dA$ are $(1,1)$. Then $S_{\bA}$ and $S_{\bA^*}$  in \eqref{e4-62} become $(2\times2)$ matrices with scalar entries and
As it was announced in \cite{T69} any quasi-self-adjoint bi-extension $\bA$ of $T$ takes a form
\begin{equation}\label{e3-40}
    \bA=\dA^*+\left[p(\cdot,\varphi)+q(\cdot,\psi) \right]\varphi     +\left[v(\cdot,\varphi)+w(\cdot,\psi) \right]\psi,
\end{equation}
where $S_{\bA}=\left(\begin{array}{cc}
                                        p & q \\
                                        v & w \\
                                      \end{array}
                                    \right)$
is a $(2\times2)$ matrix with scalar entries such that $p=H\calK$, $q=H$, $v=\calK(H\calK+i)$, and $w=i+\calK H$. Also, $\varphi$ and $\psi$ are $(-)$-normalized elements in $\calR^{-1}(\sN_i)$ and  $\calR^{-1}(\sN_{-i})$, respectively. Both parameters $H$ and $\calK$ are complex numbers in this case and $|\calK|<1$. Similarly we write
\begin{equation}\label{e-21-star}
    \bA^*=\dA^*+\left[p^\times(\cdot,\varphi)+q^\times(\cdot,\psi) \right]\varphi
    +\left[v^\times(\cdot,\varphi)+w^\times(\cdot,\psi) \right]\psi,
\end{equation}
where $S_{\bA^*}=\left(\begin{array}{cc}
                                        p^\times & q^\times \\
                                        v^\times & w^\times \\
                                      \end{array}
                                    \right)$
is  such that $p^\times=\bar\calK\bar H-i$, $q^\times=(\bar\calK\bar H-i)\bar\calK$, $v^\times=\bar H$, and $w^\times=\bar H\bar\calK$.

Following \cite{BMkT} let us assume\footnote{Throughout this paper  $\kappa$ will be
called the von Neumann extension parameter.} first that $\calK=\calK^*=\bar \calK=\kappa$ and $U=1$. We are going to use \eqref{e3-40} and \eqref{e-21-star} to describe a $(*)$-extension $\bA_1$ parameterized by these values of $\calK$ and $U$. The formula \eqref{e3-39-new} then becomes (see \cite{BMkT})
\begin{equation}\label{e-H1}
H=\frac{-i}{1-\kappa^2}[(1-\kappa)(1-\kappa)^{-1}-\kappa]=\frac{-i}{1+\kappa}
\end{equation}
and
\begin{equation}\label{e-15}
    S_{\bA_1}=\left(
            \begin{array}{cc}
              -\frac{i\kappa}{1+\kappa} & -\frac{i}{1+\kappa} \\
              -\frac{i\kappa^2}{1+\kappa}+i\kappa & i-\frac{i\kappa}{1+\kappa} \\
            \end{array}
          \right),\quad
S_{\bA^*_1}=\left(
            \begin{array}{cc}
              \frac{i\kappa}{1+\kappa}-i & \frac{i\kappa^2}{1+\kappa}-i\kappa  \\
              \frac{i}{1+\kappa} & \frac{i\kappa}{1+\kappa} \\
            \end{array}
          \right).
\end{equation}
Performing direct calculations gives
\begin{align}
    \frac{1}{2i}(S_{\bA_1}-S_{\bA^*_1})&=\frac{1-\kappa}{2+2\kappa}\left(
            \begin{array}{cc}
              1 & -1 \\
              -1 & 1 \\
            \end{array}
            \right ),
            \label{e-16}
        \\
 \frac{1}{2}(S_{\bA_1}+S_{\bA^*_1})&=\frac{i}{2}\left(
            \begin{array}{cc}
              -1 & -1 \\
              1 & 1 \\
            \end{array}
          \right). \nonumber
\end{align}
Using \eqref{e-16} with \eqref{e3-40} one obtains (see \cite{BMkT})
\begin{equation}\label{e-17}
    \begin{aligned}
    \IM\bA_1&=\frac{1-\kappa}{2+2\kappa}\Big((\cdot,\varphi-\psi)(\varphi- \psi)\Big)=(\cdot,\chi_1)\chi_1,
    \end{aligned}
\end{equation}
where
\begin{equation}\label{e-18}
    \chi_1=\sqrt{\frac{1-\kappa}{2+2\kappa}}\,(\varphi- \psi)=\sqrt{\frac{1-\kappa}{1+\kappa}}\left(\frac{1}{\sqrt2}\,\varphi- \frac{1}{\sqrt2}\,\psi\right).
\end{equation}
Also,
\begin{equation}\label{e-17-real}
    \begin{aligned}
    \RE\bA_1&=\dA^*+\frac{i}{2}(\cdot,\varphi+\psi)(\varphi- \psi).
    \end{aligned}
\end{equation}
As one can see from \eqref{e-17-real}, the domain $\dom(\hat A_1)$ of the quasi-kernel $\hat A_1$ of $\RE\bA_1$ consists of such vectors $f\in\calH_+$ that are orthogonal to $(\varphi+ \psi)$.

Consider a special case when $\kappa=0$. Then the corresponding ($*$)-extension $\bA_{1,0}$ is such that
\begin{equation}\label{e-19}
    \IM\bA_{1,0}=\frac{1}{2}(\cdot,\varphi-\psi)(\varphi- \psi)=(\cdot,\chi_{1,0})\chi_{1,0},
    \end{equation}
where
\begin{equation}\label{e-20}
    \chi_{1,0}=\frac{1}{\sqrt2}\,\left(\varphi-\psi\right).
\end{equation}
The $(*)$-extension $\bA_1$ (or $\bA_{1,0}$) that we have just described for the case of $\calK=\kappa$ and $U=1$ can be included in an L-system
\begin{equation}\label{e-62-1-1}
\Theta_1= \begin{pmatrix} \bA_1&K_1&\ 1\cr \calH_+ \subset \calH \subset
\calH_-& &\dC\cr \end{pmatrix}
\end{equation}
with $K_1c=c\cdot\chi_1$, $(c\in\dC)$.

Now let us assume that $\calK=\calK^*=\bar \calK=\kappa$ but $U=-1$ and describe a $(*)$-extension $\bA_2$ parameterized by these values of $\calK$ and $U$.
Then formula \eqref{e3-39-new} yields
\begin{equation}\label{e-H2}
H=\frac{i}{1-\kappa^2}[(1+\kappa)(1+\kappa)^{-1}+\kappa](-1)=\frac{i}{1-\kappa}.
\end{equation}
Similarly to the above, we substitute this value of $H$ into \eqref{e4-62} and obtain
\begin{align}
    S_{\bA_2}&=\left(
            \begin{array}{cc}
              \frac{i\kappa}{1-\kappa} & \frac{i}{1-\kappa} \\
              \frac{i\kappa^2}{1-\kappa}+i\kappa & i+\frac{i\kappa}{1-\kappa} \\
            \end{array}
          \right), \label{e-15-1}
          \\
S_{\bA^*_2}&=\left(
            \begin{array}{cc}
              -\frac{i\kappa}{1-\kappa}-i & -\frac{i\kappa^2}{1-\kappa}-i\kappa  \\
              -\frac{i}{1-\kappa} & -\frac{i\kappa}{1-\kappa} \\
            \end{array}
          \right).\nonumber
\end{align}
Performing direct calculations gives
\begin{align}
    \frac{1}{2i}(S_{\bA_2}-S_{\bA^*_2})&=\frac{1}{2}\cdot\frac{1+\kappa}{1-\kappa}\left(
            \begin{array}{cc}
              1 & 1 \\
              1& 1 \\
            \end{array}
          \right), \label{e-16-1}
          \\
 \frac{1}{2}(S_{\bA_2}+S_{\bA^*_2})&=\frac{i}{2}\left(
            \begin{array}{cc}
              -1 & 1 \\
              -1 & 1 \\
            \end{array}
          \right). \nonumber
\end{align}
Furthermore,
\begin{equation}\label{e-17-1}
    \begin{aligned}
    \IM\bA_2&=\frac{1+\kappa}{2-2\kappa}[(\cdot,\varphi)+(\cdot,\psi)]\varphi+ [(\cdot,\varphi)+(\cdot,\psi)]\psi\\
    &=\frac{1+\kappa}{2-2\kappa}(\cdot,\varphi+\psi)(\varphi+ \psi)\\
    &=(\cdot,\chi_2)\chi_2,
    \end{aligned}
\end{equation}
where
\begin{equation}\label{e-18-1}
    \chi_2=\sqrt{\frac{1+\kappa}{2-2\kappa}}\,(\varphi+ \psi)=\sqrt{\frac{1+\kappa}{1-\kappa}}\left(\frac{1}{\sqrt2}\,\varphi+ \frac{1}{\sqrt2}\,\psi\right).
\end{equation}
Also,
\begin{equation}\label{e-32-real}
    \begin{aligned}
    \RE\bA_2&=\dA^*-\frac{i}{2}(\cdot,\varphi-\psi)(\varphi+\psi).
    \end{aligned}
\end{equation}
As one can see from \eqref{e-32-real}, the domain $\dom(\hat A_2)$ of the quasi-kernel $\hat A_2$ of $\RE\bA_2$ consists of such vectors $f\in\calH_+$ that are orthogonal to $(\varphi- \psi)$.
Similarly to the above when $\kappa=0$, then the corresponding ($*$)-extension $\bA_{2,0}$ is such that
\begin{equation}\label{e-19-1}
    \IM\bA_{2,0}=\frac{1}{2}(\cdot,\varphi+\psi)(\varphi+ \psi)=(\cdot,\chi_{2,0})\chi_{2,0},
    \end{equation}
where
\begin{equation}\label{e-20-1}
    \chi_{2,0}=\frac{1}{\sqrt2}\,\left(\varphi+\psi\right).
\end{equation}
Again we include $\bA_2$ (or $\bA_{2,0}$) into an L-system
\begin{equation}\label{e-62-1-2}
\Theta_2= \begin{pmatrix} \bA_2&K_2&\ 1\cr \calH_+ \subset \calH \subset
\calH_-& &\dC\cr \end{pmatrix}
\end{equation}
with $K_2c=c\cdot\chi_2$, $(c\in\dC)$.

Note that two L-systems $\Theta_1$ and $\Theta_2$ in \eqref{e-62-1-1} and \eqref{e-62-1-2} are constructed in a way that the quasi-kernels $\hat A_1$ of $\RE\bA_1$ and $\hat A_2$ of $\RE\bA_2$ satisfy the conditions of Hypotheses \ref{setup} and \ref{setup-1}, respectively, as it follows from \eqref{e-17-real} and \eqref{e-32-real}.

\section{Impedance functions and Generalized Donoghue classes}\label{s5}

We say (see \cite{BMkT}) that an analytic function $M$ from $\bbC_+$ into itself belongs to the \textbf{generalized Donoghue class} $\sM_\kappa$, ($0\le\kappa<1$) if it admits the representation \eqref{hernev}  where $\mu$ is an infinite Borel measure such that
\begin{equation}\label{e-38-kap}
\int_\bbR\frac{d\mu(\lambda)}{1+\lambda^2}=\frac{1-\kappa}{1+\kappa}\,,\quad\text{equivalently,}\quad M(i)=i\,\frac{1-\kappa}{1+\kappa},
\end{equation}
and to the \textbf{generalized Donoghue class}\footnote{Introducing classes $\sM_\kappa$ and $\sM_\kappa^{-1}$ we followed the notation  of \cite{KK74}.} $\sM_\kappa^{-1}$, ($0\le\kappa<1$)  if it admits the representation \eqref{hernev}
and
\begin{equation}\label{e-39-kap}
\int_\bbR\frac{d\mu(\lambda)}{1+\lambda^2}=\frac{1+\kappa}{1-\kappa}\,,\quad\text{equivalently,}\quad M(i)=i\,\frac{1+\kappa}{1-\kappa}.
\end{equation}
Clearly, $\sM_0=\sM_0^{-1}=\sM$, the (standard) Donoghue class introduced above.

Let
\begin{equation}\label{e-62}
\Theta= \begin{pmatrix} \bA&K&\ 1\cr \calH_+ \subset \calH \subset
\calH_-& &\dC\cr \end{pmatrix}
\end{equation}
be a minimal L-system with one-dimensional input-output space $\dC$ whose main operator $T$ and the quasi-kernel $\hat A$ of $\RE\bA$ satisfy the conditions of Hypothesis \ref{setup}. It is shown in \cite{BMkT} that  the transfer function of $W_\Theta(z)$ and the characteristic function $S(z)$ of the triplet $(\dA, T,\hat A)$ are reciprocals of each other, i.e.,
\begin{equation}\label{e-60}
    W_{\Theta}(z)=\frac{1}{S(z)},\quad z\in\dC_+\cap\rho(T)\cap\rho(\hat A).
\end{equation}
It is also shown in \cite{BMkT} that  the impedance function $V_\Theta(z)$ of $\Theta$ can be represented as
\begin{equation}\label{e-imp-m}
    V_{\Theta}(z)=\left(\frac{1-\kappa}{1+\kappa}\right)V_{\Theta_0}(z),
\end{equation}
where $V_{\Theta_0}(z)$ is  the impedance function of an L-system $\Theta_0$ with the same set of conditions but with $\kappa_0=0$, where $\kappa_0$ is the von Neumann parameter of the main operator $T_0$ of $\Theta_0$.

 Moreover, if $\Theta$ is an arbitrary minimal L-system of the form \eqref{e-62}, then the transfer function of $W_\Theta(z)$ and the characteristic function $S(z)$ of the triplet $(\dA, T,\hat A)$ are related as follows
\begin{equation}\label{e-56-arb}
    W_{\Theta}(z)=\frac{\eta}{S(z)},\quad z\in\dC_+\cap\rho(T)\cap\rho(\hat A),
\end{equation}
where $\eta\in\dC$ and $|\eta|=1$ (see \cite[Corollary 11]{BMkT}).

\begin{lemma}\label{l-7}
Let $\Theta_1$ and $\Theta_2$ be two minimal L-system of the form \eqref{e-62}  whose components satisfy the conditions of Hypothesis \ref{setup} and Hypothesis \ref{setup-1}, respectively.
Then the impedance functions $V_{\Theta_1}(z)$ and $V_{\Theta_2}(z)$ admit the integral representation
\begin{equation}\label{e-60-nu-1}
V_{\Theta_{k}}(z)=\int_\bbR \left(\frac{1}{t-z}-\frac{t}{1+t^2}\right )d\mu_{k}(t),\quad k=1,2.
\end{equation}
\end{lemma}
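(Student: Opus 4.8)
The plan is to show that each impedance function $V_{\Theta_k}$ is a \emph{positive scalar multiple of a genuine Weyl-Titchmarsh function} lying in the standard Donoghue class $\mM$; since every $M\in\mM$ admits the representation \eqref{hernev}, scaling its generating measure by that constant produces exactly \eqref{e-60-nu-1}. To see that this is the right target, recall that a general Herglotz-Nevanlinna function carries a representation $a+bz+\int_\bbR(\frac{1}{t-z}-\frac{t}{1+t^2})\,d\mu$, and since $\frac{1}{t-i}-\frac{t}{1+t^2}=\frac{i}{1+t^2}$ one has $\Ree$ of the integral part vanishing at $z=i$; thus the ``pure'' form \eqref{e-60-nu-1} (with $a=0$, $b=0$) holds precisely when $\Ree V_{\Theta_k}(i)=0$ and $\lim_{y\to+\infty}V_{\Theta_k}(iy)/(iy)=0$. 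Both conditions are automatic for a positive multiple of an $\mM$-function, because for $M\in\mM$ we have $M(i)=i$ and no linear growth at infinity.

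First I would treat $\Theta_1$ (Hypothesis \ref{setup}). Combining \eqref{e-60}, i.e.\ $W_{\Theta_1}=1/S$, with the first relation in \eqref{e6-3-6} (where $J=1$) gives $V_{\Theta_1}=i\,\frac{1-S}{1+S}$. Substituting the characteristic function \eqref{ch1} for real $\kappa$ and then the Liv\v{s}ic–Weyl-Titchmarsh link \eqref{blog}, $s=\frac{M-i}{M+i}$, a short computation collapses the composite M\"obius transformation to
\begin{equation*}
V_{\Theta_1}(z)=\frac{1-\kappa}{1+\kappa}\,M(z),
\end{equation*}
where $M=M(\dA,A)$ is the Weyl-Titchmarsh function of the pair $(\dA,A)$; this recovers \eqref{e-imp-m} with $V_{\Theta_{1,0}}=M$. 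Since $M\in\mM$ by Donoghue's realization theorem, $M(z)=\int_\bbR(\frac{1}{t-z}-\frac{t}{1+t^2})\,d\mu(t)$, and setting $d\mu_1=\frac{1-\kappa}{1+\kappa}\,d\mu$ settles the case $k=1$.

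For $\Theta_2$ (Hypothesis \ref{setup-1}) I would run the identical chain, feeding in the sign relations recorded in Section \ref{s4}: the Liv\v{s}ic function of the pair $(\dA,A_-)$ satisfies $s(\dA,A_-)=-s(\dA,A_+)$, and correspondingly $S(\dA,T,A_-)=-S(\dA,T,A_+)$ by \eqref{e-49-1}. Tracking these signs through $V_{\Theta_2}=i\,\frac{\eta-S}{\eta+S}$, with the unimodular factor $\eta$ of \eqref{e-56-arb} appropriate to the $U=-1$ construction of Section \ref{s3}, yields
\begin{equation*}
V_{\Theta_2}(z)=\frac{1+\kappa}{1-\kappa}\,\wt M(z),\qquad \wt M=M(\dA,A_-)=-\tfrac{1}{M}\in\mM ,
\end{equation*}
so again $V_{\Theta_2}$ is a positive multiple of an $\mM$-function and \eqref{e-60-nu-1} follows with $d\mu_2=\frac{1+\kappa}{1-\kappa}\,d\wt\mu$.

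The routine steps are the two M\"obius simplifications. The one place demanding care is the $\Theta_2$ computation: pinning down the normalizing constant $\frac{1+\kappa}{1-\kappa}$ (rather than $\frac{1-\kappa}{1+\kappa}$) hinges on using the \emph{paired} substitution $s\mapsto-s$, $\kappa\mapsto-\kappa$ from the proof of \eqref{e-49-1}, matched to the value of $\eta$ dictated by the anti-Hypothesis domain condition $g_++g_-\in\dom(A_-)$; this is precisely the asymmetry already visible in the channel vectors $\chi_1,\chi_2$ of \eqref{e-18} and \eqref{e-18-1}. Once the constant is correct the integral representation is immediate, and as a bonus the normalizations $\int_\bbR\frac{d\mu_1}{1+t^2}=\frac{1-\kappa}{1+\kappa}$ and $\int_\bbR\frac{d\mu_2}{1+t^2}=\frac{1+\kappa}{1-\kappa}$ drop out, anticipating the membership $V_{\Theta_1}\in\sM_\kappa$ and $V_{\Theta_2}\in\sM_\kappa^{-1}$.
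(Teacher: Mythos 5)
Your proposal is correct, but it follows a genuinely different route from the paper's. The paper starts from the a priori representation $V_{\Theta_k}(z)=Q_k+\int_\bbR\bigl(\tfrac{1}{t-z}-\tfrac{t}{1+t^2}\bigr)d\mu_k(t)$ and only has to kill the constants: $Q_1=0$ is immediate from the cited $\sM_\kappa$-membership criterion \cite[Theorem 12]{BMkT}, and $Q_2=0$ is obtained ``geometrically'' from the resolvent formula $V_{\Theta_2}(z)=((\hat A_2-zI)^{-1}\chi,\chi)$ of \eqref{e-52-1}: the quasi-kernel $\hat A_2$ does not depend on $\kappa$, the channel vector $\chi$ of \eqref{e-18-1} carries the entire $\kappa$-dependence as the scalar $\sqrt{(1+\kappa)/(1-\kappa)}$, and the case $\kappa=0$ lands in $\sM$ by \cite[Theorem 11]{BMkT}. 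You instead push everything through the chain of M\"obius transformations $W_\Theta\to S\to s\to M$, arriving at $V_{\Theta_1}=\tfrac{1-\kappa}{1+\kappa}M$ and $V_{\Theta_2}=\tfrac{1+\kappa}{1-\kappa}\bigl(-\tfrac1M\bigr)$ and then invoking the Donoghue realization theorem for $\mM$; your computations are right (they reproduce \eqref{e-imp-m} and anticipate \eqref{e-73-new}), and your approach has the bonus of delivering the normalizations \eqref{e-38-kap}--\eqref{e-39-kap} at the same time. The one soft spot is the determination of the unimodular factor $\eta$ in \eqref{e-56-arb} for the Hypothesis \ref{setup-1} system: you assert it is ``dictated by'' the anti-Hypothesis domain condition, but in the paper the relation $W_{\Theta_2}=-W_{\Theta_1}$ is only established in Theorem \ref{t-7ha}, whose proof uses this very lemma, so you would need to compute $\eta$ directly from the $U=-1$ construction of Section \ref{s3} (it can be done, but it is exactly the step the paper's resolvent/channel-vector argument is designed to avoid). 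With that computation supplied, your proof is complete.
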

\begin{proof}
We know (see \cite{ABT}) that both $V_{\Theta_1}(z)$ and $V_{\Theta_2}(z)$ have the integral representation
\begin{equation}\label{e-60-nu-2}
V_{\Theta_{k}}(z)=Q_{k}+\int_\bbR \left(\frac{1}{t-z}-\frac{t}{1+t^2}\right )d\mu_{k}(t),\quad k=1,2,
\end{equation}
and hence our goal is to show that $Q_1=Q_2=0$.
It was shown in \cite[Theorem 12]{BMkT} that under the conditions of Hypothesis \ref{setup} the function $V_{\Theta_1}(z)$ belongs to the class $\sM_\kappa$ and thus $Q_1=0$. We are going to show that the same property takes place for the function $V_{\Theta_2}(z)$. Consider the system $\Theta_2$ satisfying  Hypothesis \ref{setup-1}. In Section \ref{s3} we showed that if an L-system satisfies Hypothesis \ref{setup-1} and hence the parameter $U=-1$ in von Neumann's representation \eqref{DOMHAT} of the quasi-kernel $\hat A_2$ of $\RE\bA_2$, then  $\IM\bA_2=(\cdot,\chi)\chi$, where the vector $\chi$ is given by \eqref{e-18-1}. We know (see \cite{ABT}, \cite{BMkT}) that
\begin{equation}\label{e-52-1}
     V_{\Theta_2}(z)=((\RE\bA_2-z I)^{-1}\chi,\chi)=((\hat A_2-z I)^{-1}\chi,\chi),
\end{equation}
$$
\chi=\frac{1}{\sqrt2}\sqrt{\frac{1+\kappa}{1-\kappa}}\left(\,\varphi+ \psi\right),
$$
where $\hat A_2$ is the quasi-kernel of $\RE\bA_2$ of $\Theta_2$ and $\varphi$, $\psi$ are vectors in $\calH_-$ described in Section \ref{s3}. According to \cite[Theorem 11]{BMkT}, the impedance function of $\Theta_2$ belongs to the Donoghue class $\sM$ if and only if $\kappa=0$. Thus, if we set $\kappa=0$ in \eqref{e-52-1}, then $V_{\Theta_2}(z)\in\sM$ and has $Q_2=0$ in \eqref{e-60-nu-1}. But since the quasi-kernel $\hat A_2$ does not depend on $\kappa$, the expression for $V_{\Theta_2}(z)$ when $\kappa\ne0$ is only different from the case $\kappa=0$ by the constant factor $\frac{1+\kappa}{1-\kappa}$. Therefore $Q_2=0$ in \eqref{e-60-nu-1} no matter what value of $\kappa$ is used.
\end{proof}

Now let us consider a minimal L-system $\Theta$ of the form \eqref{e-62} that satisfies Hypothesis \ref{setup}. Let also
\begin{equation}\label{e-62-alpha}
\Theta_\alpha= \begin{pmatrix} \bA_\alpha&K_\alpha&\ 1\cr \calH_+ \subset \calH \subset
\calH_-& &\dC\cr \end{pmatrix},\quad \alpha\in[0,\pi),
\end{equation}
 be a one parametric family of L-systems such that
 \begin{equation}\label{e-63-alpha}
    W_{\Theta_\alpha}(z)=W_\Theta(z)\cdot (-e^{2i\alpha}),\quad \alpha\in[0,\pi).
 \end{equation}
The existence and structure of $\Theta_\alpha$ were described in details in \cite[Section 8.3]{ABT}. In particular, it was shown that $\Theta$ and $\Theta_\alpha$ share the same main operator $T$ and that
\begin{equation}\label{e-64-alpha}
    V_{\Theta_\alpha}(z)=\frac{\cos\alpha+(\sin\alpha) V_\Theta(z)}{\sin\alpha-(\cos\alpha) V_\Theta(z)}.
\end{equation}
The following theorem takes place.
\begin{theorem}\label{t-6ha}
Let $\Theta$ be a minimal L-system  of the form \eqref{e-62} that satisfies Hypothesis \ref{setup}.
Let also $\Theta_{\alpha}$ be a one parametric family of L-systems given by \eqref{e-62-alpha}-\eqref{e-63-alpha}. Then the impedance function $V_{\Theta_{\alpha}}(z)$ has an integral representation
$$
V_{\Theta_{\alpha}}(z)=\int_\bbR \left(\frac{1}{t-z}-\frac{t}{1+t^2}\right )d\mu_{\alpha}(t)
$$
if and only if $\alpha=0$ or $\alpha=\pi/2$.
\end{theorem}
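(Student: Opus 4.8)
The plan is to reduce the existence of the stated integral representation to the vanishing of a single scalar --- the free term in the Herglotz--Nevanlinna representation of $V_{\Theta_\alpha}$ --- and then to compute that scalar explicitly from \eqref{e-64-alpha}. Since $\Theta_\alpha$ is again an L-system (see \cite[Section 8.3]{ABT}), its impedance function is a Herglotz--Nevanlinna function that carries no linear term, so exactly as in \eqref{e-60-nu-2} it admits a representation
\[
V_{\Theta_\alpha}(z)=Q_\alpha+\int_\bbR\left(\frac{1}{t-z}-\frac{t}{1+t^2}\right)d\mu_\alpha(t),
\]
and the representation asserted in the theorem holds precisely when $Q_\alpha=0$. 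Evaluating at $z=i$ and using the identity $\frac{1}{t-i}-\frac{t}{1+t^2}=\frac{i}{1+t^2}$, one gets $Q_\alpha=\RE V_{\Theta_\alpha}(i)$, so the entire statement collapses to the scalar condition $\RE V_{\Theta_\alpha}(i)=0$.

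To test this condition I would use that $\Theta$ satisfies Hypothesis~\ref{setup}, so $V_\Theta\in\sM_\kappa$ and, by \eqref{e-38-kap}, $V_\Theta(i)=i\,\frac{1-\kappa}{1+\kappa}=:ia$ with $0<a\le 1$. Substituting $V_\Theta(i)=ia$ into the M\"obius relation \eqref{e-64-alpha} gives
\[
V_{\Theta_\alpha}(i)=\frac{\cos\alpha+ia\sin\alpha}{\sin\alpha-ia\cos\alpha},
\]
and multiplying numerator and denominator by $\sin\alpha+ia\cos\alpha$ leads, after a short calculation, to
\[
\RE V_{\Theta_\alpha}(i)=\frac{(1-a^2)\sin\alpha\cos\alpha}{\sin^2\alpha+a^2\cos^2\alpha}.
\]
The denominator is strictly positive, and in the genuinely generalized case $0<\kappa<1$ one has $a<1$, so that $1-a^2>0$. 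Hence $\RE V_{\Theta_\alpha}(i)$ vanishes if and only if $\sin\alpha\cos\alpha=0$, that is, if and only if $\alpha=0$ or $\alpha=\pi/2$ on $[0,\pi)$. This settles both implications at once.

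It remains to make the two admissible cases explicit and, more importantly, to secure the one delicate point in the reduction above. For $\alpha=\pi/2$ formula \eqref{e-64-alpha} gives $V_{\Theta_{\pi/2}}=V_\Theta\in\sM_\kappa$, which already has the representation \eqref{hernev}. For $\alpha=0$ it gives $V_{\Theta_0}=-1/V_\Theta$, with $V_{\Theta_0}(i)=i\,\frac{1+\kappa}{1-\kappa}$, placing it in $\sM_\kappa^{-1}$ by \eqref{e-39-kap}. The step I expect to be the main obstacle is justifying that $V_{\Theta_0}=-1/V_\Theta$ truly carries no linear term, since the reciprocal of a Herglotz--Nevanlinna function may in principle acquire a term $L_0 z$ with $L_0>0$. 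This is ruled out by the \emph{infiniteness} of the generating measure $\mu$ of $V_\Theta$: by monotone convergence
\[
\bigl|\,iy\,V_\Theta(iy)\,\bigr|\ge y\,\IM V_\Theta(iy)=\int_\bbR\frac{y^2}{t^2+y^2}\,d\mu(t)\longrightarrow\mu(\bbR)=+\infty,
\]
so that $L_0=\lim_{y\to\infty}\frac{V_{\Theta_0}(iy)}{iy}=-\lim_{y\to\infty}\frac{1}{iy\,V_\Theta(iy)}=0$. The same infiniteness is what underlies the absence of a linear term in the representation of $V_{\Theta_\alpha}$ used at the outset, and thus legitimizes reducing the whole theorem to the single condition $Q_\alpha=0$.
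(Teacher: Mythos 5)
Your proposal is correct and follows essentially the same route as the paper: both reduce the claim to the vanishing of the free term $Q_\alpha$ in the Herglotz--Nevanlinna representation, evaluate $V_{\Theta_\alpha}(i)$ via the M\"obius relation \eqref{e-64-alpha} with $V_\Theta(i)=i\,\frac{1-\kappa}{1+\kappa}$, and obtain the identical expression $Q_\alpha=\frac{(1-\Delta^2)\sin\alpha\cos\alpha}{\sin^2\alpha+\Delta^2\cos^2\alpha}$, which vanishes (for $0<\kappa<1$) exactly at $\alpha=0,\pi/2$. Your additional verification that no linear term $L_0z$ can appear (via the infiniteness of the measure) is a sound way to justify a step the paper instead delegates to Lemma~\ref{l-7} and the realization results of \cite{ABT}.
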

\begin{proof}
It was proved in \cite[Theorem 12]{BMkT} that $V_\Theta(z)$ belongs to the generalized Donoghue class $\sM_\kappa$  holds if and only if the quasi-kernel $\hat A$ of $\RE\bA$ satisfies the conditions of Hypothesis \ref{setup} which is true in our case. Thus,
 $$
V_\Theta(i)=i\,\frac{1-\kappa}{1+\kappa}.
 $$
Let us set
$$
\Delta=\frac{1-\kappa}{1+\kappa}.
$$
Clearly, $\Delta>0$ since we operate under assumption that $0<\kappa<1$. It follows from \eqref{e-64-alpha} that
$$
\begin{aligned}
V_{\Theta_\alpha}(i)&=\frac{\cos\alpha+(\sin\alpha) V_\Theta(i)}{\sin\alpha-(\cos\alpha) V_\Theta(i)}=\frac{\cos\alpha+i(\sin\alpha) \Delta}{\sin\alpha-i(\cos\alpha) \Delta}\\
&=\frac{(\cos\alpha+i(\sin\alpha) \Delta)(\sin\alpha+i(\cos\alpha) \Delta)}{(\sin\alpha-i(\cos\alpha) \Delta)(\sin\alpha+i(\cos\alpha) \Delta)}=\frac{\cos\alpha\sin\alpha(1-\Delta^2)+i\Delta}{\sin^2\alpha+\cos^2\alpha\cdot\Delta^2}\\
&=\frac{\cos\alpha\sin\alpha(1-\Delta^2)}{\sin^2\alpha+\cos^2\alpha\cdot\Delta^2}+i\frac{\Delta}{\sin^2\alpha+\cos^2\alpha\cdot\Delta^2}.
\end{aligned}
$$
On the other hand, $V_{\Theta_\alpha}(z)$ admits the integral representation of the form \eqref{e-60-nu-1} and hence
$$
\begin{aligned}
V_{\Theta_{\alpha}}(i)&=Q_{\alpha}+\int_\bbR \left(\frac{1}{t-i}-\frac{t}{1+t^2}\right )d\mu_{\alpha}(t)\\
&=Q_{\alpha}+\int_\bbR \left(\frac{1}{t-i}-\frac{t}{(t-i)(t+i)}\right )d\mu_{\alpha}(t)\\
&=Q_{\alpha}+\int_\bbR \frac{1}{t-i}\left(1-\frac{t}{t+i}\right )d\mu_{\alpha}(t)=Q_{\alpha}+i\int_\bbR \frac{1}{1+t^2}\,d\mu_{\alpha}(t).
\end{aligned}
$$
Comparing the two representations of $V_{\Theta_{\alpha}}(i)$ we realize that
\begin{equation}\label{e-65-alpha}
Q_{\alpha}=\frac{\cos\alpha\sin\alpha(1-\Delta^2)}{\sin^2\alpha+\cos^2\alpha\cdot\Delta^2}.
\end{equation}
Analyzing \eqref{e-65-alpha} yields that $Q_{\alpha}=0$ for $\kappa\ne0$ only if either $\alpha=0$ or $\alpha=\pi/2$. Consequently, the only two options for $Q_\alpha$ to be zero are either
$ W_{\Theta_{\pi/2}}(z)=W_\Theta(z)$ or $ W_{\Theta_{0}}(z)=-W_\Theta(z)$. In the former case $\Theta_{\pi/2}=\Theta$ and hence $\Theta_{\pi/2}$ satisfies Hypothesis \ref{setup}. Taking into account Lemma \ref{l-7} we conclude that $\Theta_{0}$ must comply with the conditions of Hypothesis \ref{setup-1} as the only available option.
\end{proof}

The next result describes the relationship between two L-systems of the form \eqref{e-62} that comply with  Hypotheses  \ref{setup}  and  \ref{setup-1}.
\begin{theorem}\label{t-7ha}
Let
\begin{equation}\label{e-62-1}
\Theta_1= \begin{pmatrix} \bA_1&K_1&\ 1\cr \calH_+ \subset \calH \subset
\calH_-& &\dC\cr \end{pmatrix}
\end{equation}
be a minimal L-system whose main operator $T$ and the quasi-kernel $\hat A_1$ of $\RE\bA_1$ satisfy the conditions of Hypothesis \ref{setup} and let
\begin{equation}\label{e-62-2}
\Theta_2= \begin{pmatrix} \bA_2&K_2&\ 1\cr \calH_+ \subset \calH \subset
\calH_-& &\dC\cr \end{pmatrix}
\end{equation}
be another minimal L-system with the same  operators $\dA$ and $T$ as $\Theta_1$ but with the quasi-kernel $\hat A_2$ of $\RE\bA_2$ that satisfies the conditions of Hypothesis \ref{setup-1}. Then
\begin{equation}\label{e-55-1}
    W_{\Theta_1}(z)=-W_{\Theta_2}(z),\quad z\in\dC_+\cap\rho(T),
\end{equation}
and
\begin{equation}\label{e-56-1}
    V_{\Theta_1}(z)=-\frac{1}{V_{\Theta_2}(z)},\quad z\in\dC_+\cap\rho(T).
\end{equation}
\end{theorem}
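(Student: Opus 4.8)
The plan is to prove the transfer-function identity \eqref{e-55-1} first, and then to read off \eqref{e-56-1} from it by the scalar Möbius correspondence \eqref{e6-3-6}. The starting observation is that $\Theta_1$ and $\Theta_2$ carry the same symmetric operator $\dA$ and the same main operator $T$, so the characteristic functions $S_1(z)=S(\dA,T,\hat A_1)$ and $S_2(z)=S(\dA,T,\hat A_2)$ of the associated triples differ only through the self-adjoint extension occupying the third slot. Since $\hat A_1$ realizes the first relation of Hypothesis \ref{setup} and $\hat A_2$ the first relation of Hypothesis \ref{setup-1}, the sign rule \eqref{e-49-1} applies and gives $S_1(z)=-S_2(z)$ on $\dC_+$.

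Next I would use the reciprocity between transfer and characteristic functions. For $\Theta_1$ this is \eqref{e-60}, i.e. $W_{\Theta_1}(z)=1/S_1(z)$. The corresponding identity $W_{\Theta_2}(z)=1/S_2(z)$ for $\Theta_2$ is the Hypothesis \ref{setup-1} analogue of \eqref{e-60}; since \eqref{e-60} is recorded only under Hypothesis \ref{setup}, I would secure it by identifying $\Theta_2$ with the member $\Theta_0$ (at $\alpha=0$) of the family \eqref{e-62-alpha}--\eqref{e-63-alpha} built over $\Theta_1$. Indeed, the proof of Theorem \ref{t-6ha} shows precisely that this $\Theta_0$ satisfies Hypothesis \ref{setup-1} and that $W_{\Theta_0}(z)=-W_{\Theta_1}(z)$; as a minimal L-system of the form \eqref{e-62} is, up to bi-unitary equivalence, determined by its main operator together with the quasi-kernel of $\RE\bA$ (its $(*)$-extension being fixed by the parametrization of Section \ref{s3}, the scalar channel vector only up to an irrelevant phase), and $\Theta_0$, $\Theta_2$ agree in both data, one concludes $W_{\Theta_2}=W_{\Theta_0}$. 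Combining $W_{\Theta_j}=1/S_j$ with $S_1=-S_2$ then yields $W_{\Theta_1}(z)=1/S_1(z)=-1/S_2(z)=-W_{\Theta_2}(z)$, which is \eqref{e-55-1}.

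To pass to the impedance functions I would specialize \eqref{e6-3-6} to $E=\dC$ and $J=1$, where it reads $V_{\Theta_j}(z)=i\,[W_{\Theta_j}(z)-1]/[W_{\Theta_j}(z)+1]$. Writing $w=W_{\Theta_1}(z)$, so that $W_{\Theta_2}(z)=-w$ by \eqref{e-55-1}, a direct simplification gives $V_{\Theta_1}=i(w-1)/(w+1)$ and $V_{\Theta_2}=i(w+1)/(w-1)$, whence $V_{\Theta_1}(z)\,V_{\Theta_2}(z)=i^2=-1$. This is exactly \eqref{e-56-1}.

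The only delicate point is the reciprocity $W_{\Theta_2}=1/S_2$ under Hypothesis \ref{setup-1}, i.e. that the unimodular factor $\eta$ of \eqref{e-56-arb} equals $1$ for $\Theta_2$; this is where the normalization attached to the anti-Hypothesis must be controlled, and the identification $\Theta_2\cong\Theta_0$ through Theorem \ref{t-6ha} is the cleanest way to do so. Alternatively one may recompute $W$ directly from the explicit $(*)$-extension $\bA_2$ and channel vector $\chi_2$ of Section \ref{s3}, whose positive normalization forces $\eta=1$. Once the sign of the transfer functions is pinned down, the remainder is routine algebra.
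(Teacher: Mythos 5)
Your proof is correct and follows essentially the same route as the paper's: both arguments pin down the sign of $W_{\Theta_2}$ by identifying $\Theta_2$ with the $\alpha=0$ member of the family $\Theta_\alpha$ built over $\Theta_1$ (the paper gets there via Lemma \ref{l-7} and Theorem \ref{t-6ha}, you via the fact that $\Theta_0$ satisfies Hypothesis \ref{setup-1} together with uniqueness of the $(*)$-extension given $T$ and the quasi-kernel), and then both obtain \eqref{e-56-1} from the scalar M\"obius relation \eqref{e6-3-6}. Your additional detour through $S_1=-S_2$ and the reciprocity \eqref{e-60} is sound but not needed once the identification $\Theta_2\cong\Theta_0$ is in hand.
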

\begin{proof}
Using Lemma \ref{l-7} and the reasoning above we conclude that since $V_{\Theta_1}(z)$ and $V_{\Theta_2}(z)$ do not have constant terms in their integral representation and $\Theta_1$ is different from $\Theta_2$, equation \eqref{e-55-1} must take place. Then we use relations \eqref{e6-3-6} to obtain \eqref{e-56-1}.
\end{proof}

Next, we present a criterion for the impedance function of a system  to belong to    the generalized Donoghue class $\sM^{-1}_\kappa$.
(cf. \cite[Theorem 12]{BMkT} for the   $\sM_\kappa$-membership criterion).

\begin{theorem}\label{t-22}
Let $\Theta$ of the form \eqref{e-62} be a minimal L-system with the main operator $T$ that has the von Neumann parameter $\kappa$, $(0<\kappa<1)$ and impedance function $V_\Theta(z)$ which is not an identical constant in $\dC_+$.
Then  $V_\Theta(z)$ belongs to the generalized Donoghue class $\sM^{-1}_\kappa$  if and only if the quasi-kernel $\hat A$ of $\RE\bA$ satisfies the conditions of Hypothesis \ref{setup-1}.
\end{theorem}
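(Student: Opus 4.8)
The plan is to characterize membership in $\sM_\kappa^{-1}$ through the normalization $V_\Theta(i) = i\,\frac{1+\kappa}{1-\kappa}$ from \eqref{e-39-kap}, and to show this is equivalent to the quasi-kernel $\hat A$ of $\RE\bA$ satisfying Hypothesis \ref{setup-1}. First I would reduce to the explicit model of Section \ref{s3}. Since the von Neumann parameter of $T$ is $\kappa$ (i.e. $\calK=\kappa$), the quasi-kernel $\hat A$ is determined by the parameter $U=\pm 1$ in \eqref{DOMHAT}; Hypothesis \ref{setup} corresponds to $U=1$ (giving $\Theta_1$, $\hat A_1$) and Hypothesis \ref{setup-1} corresponds to $U=-1$ (giving $\Theta_2$, $\hat A_2$), as established at the end of Section \ref{s3}. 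The key computation already available is \eqref{e-17-1}--\eqref{e-18-1}: under $U=-1$ we have $\IM\bA_2=(\cdot,\chi_2)\chi_2$ with $\chi_2=\sqrt{\tfrac{1+\kappa}{1-\kappa}}\,\bigl(\tfrac{1}{\sqrt2}\varphi+\tfrac{1}{\sqrt2}\psi\bigr)$, so the scaling factor $\tfrac{1+\kappa}{1-\kappa}$ is built into the channel vector.

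For the direction \emph{($\Leftarrow$)}, suppose $\hat A$ satisfies Hypothesis \ref{setup-1}. By Lemma \ref{l-7} (applied to the role of $\Theta_2$) the impedance function has no constant term, so
\begin{equation*}
V_\Theta(z)=\int_\bbR\left(\frac{1}{t-z}-\frac{t}{1+t^2}\right)d\mu(t),
\end{equation*}
and it remains to verify the normalization of the generating measure. Using the representation \eqref{e-52-1}, $V_\Theta(z)=((\hat A_2-zI)^{-1}\chi_2,\chi_2)$, together with $\|\chi_2\|^2=\tfrac{1+\kappa}{1-\kappa}$, I would compute $V_\Theta(i)$ by comparing with the base case $\kappa=0$. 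When $\kappa=0$ the vector $\chi_{2,0}=\tfrac{1}{\sqrt2}(\varphi+\psi)$ gives an $\sM$-function (Donoghue class) with $V(i)=i$, exactly as in the proof of Lemma \ref{l-7}. Since $\hat A_2$ is independent of $\kappa$ and $\chi_2=\sqrt{\tfrac{1+\kappa}{1-\kappa}}\,\chi_{2,0}$, bilinearity of \eqref{e-52-1} yields
\begin{equation*}
V_\Theta(i)=\frac{1+\kappa}{1-\kappa}\,V_{\Theta_0}(i)=i\,\frac{1+\kappa}{1-\kappa},
\end{equation*}
which is precisely \eqref{e-39-kap}; hence $V_\Theta(z)\in\sM_\kappa^{-1}$.

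For the direction \emph{($\Rightarrow$)}, I would argue by elimination. A minimal L-system of the form \eqref{e-62} with main operator $T$ of von Neumann parameter $\kappa$ has its quasi-kernel governed by the parameter $U$, and Theorem \ref{t-6ha} shows that among the one-parameter family $\Theta_\alpha$ only $\alpha=0$ and $\alpha=\pi/2$ produce impedance functions with no constant term (i.e. genuine integral representations \eqref{hernev}). These two cases correspond, via Theorem \ref{t-7ha}, exactly to the two hypotheses: the $\sM_\kappa$-case (Hypothesis \ref{setup}) and the $\sM_\kappa^{-1}$-case (Hypothesis \ref{setup-1}), linked by $V_{\Theta_1}=-1/V_{\Theta_2}$ as in \eqref{e-56-1}. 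If $V_\Theta(z)\in\sM_\kappa^{-1}$ then its normalization $V_\Theta(i)=i\,\tfrac{1+\kappa}{1-\kappa}>i$ rules out the $\sM_\kappa$ normalization $i\,\tfrac{1-\kappa}{1+\kappa}<i$ (these are distinct for $\kappa\ne 0$), forcing the $U=-1$ alternative, i.e. Hypothesis \ref{setup-1}.

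The main obstacle I anticipate is the uniqueness/rigidity step in the forward direction: one must be sure that no quasi-kernel other than the $U=\pm1$ pair can yield an impedance function in the integral class \eqref{hernev} for a system with this fixed $T$. This is exactly what Theorem \ref{t-6ha} supplies (only $\alpha\in\{0,\pi/2\}$ work), so the argument hinges on correctly identifying the two admissible systems with the two hypotheses via \eqref{e-56-1} and the explicit normalizations in \eqref{e-38-kap}--\eqref{e-39-kap}. Because the $\sM_\kappa$ and $\sM_\kappa^{-1}$ normalizations are strictly separated for $0<\kappa<1$, the matching is unambiguous and the equivalence follows.
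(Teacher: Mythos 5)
Your proposal is correct and follows essentially the same route as the paper: Lemma \ref{l-7} to kill the constant term, the normalization $V_\Theta(i)=i\,\tfrac{1+\kappa}{1-\kappa}$ to identify the class, and Theorem \ref{t-6ha} plus the strict separation of the $\sM_\kappa$ and $\sM_\kappa^{-1}$ normalizations to force Hypothesis \ref{setup-1} in the converse. The only cosmetic difference is that for the value $V_\Theta(i)$ the paper invokes the reciprocal relation \eqref{e-56-1} with a companion system satisfying Hypothesis \ref{setup}, whereas you rerun the $\chi_2$-scaling computation already embedded in the proof of Lemma \ref{l-7}; both yield the same conclusion.
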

\begin{proof}
Suppose $\Theta$ is such that the quasi-kernel $\hat A$ of $\RE\bA$ satisfies the conditions of Hypothesis \ref{setup-1}. Then according to Lemma \ref{l-7} it has an integral representation
\begin{equation}\label{e-63-+1}
V_{\Theta}(z)=\int_\bbR \left(\frac{1}{t-z}-\frac{t}{1+t^2}\right )d\mu(t).
\end{equation}
Also, if $\Theta'$ is another system of the form \eqref{e-62} with the same main operator that satisfies the conditions of Hypothesis \ref{setup}, then $V_{\Theta'}(z)\in\sM_\kappa$. Then  we can utilize \eqref{e-56-1} to get
\begin{equation}\label{e-63-+2}
V_{\Theta}(i)=\int_\bbR \frac{1}{1+t^2}\,d\mu(t)=-\frac{1}{V_{\Theta'}(i)}=-\frac{1}{\frac{1-\kappa}{1+\kappa}i}=\frac{1+\kappa}{1-\kappa}i.
\end{equation}
Therefore, $V_{\Theta}(z)\in\sM_\kappa^{-1}$ and we have shown necessity.

Now let us assume $V_{\Theta}(z)\in\sM_\kappa^{-1}$ and hence satisfies \eqref{e-63-+1} and \eqref{e-63-+2}. Then, according to Theorem \ref{t-6ha}, equation \eqref{e-63-+1} implies that $V_{\Theta}(z)$ should either match the case $\alpha=0$ or $\alpha=\pi/2$. But $\alpha=\pi/2$ would cause $\Theta$ to comply with Hypothesis \ref{setup} which would violate \eqref{e-63-+2} since (see \cite{BMkT}) then $V_{\Theta}(z)\in\sM_\kappa$. Thus, $\alpha=0$ and $\Theta$  complies with Hypothesis \ref{setup-1}.
\end{proof}

\begin{remark}\label{r-10}
Let us consider the case when the condition of $V_{\Theta}(z)$ not being an identical constant in $\dC_+$ is omitted in the statement of Theorem \ref{t-22}.
Suppose $V_{\Theta}(z)$ is an identical constant and $\Theta$ satisfies the conditions of Hypothesis \ref{setup-1}. Let $\Theta'$ be the L-system described in the proof of Theorem \ref{t-22} that satisfies the conditions of Hypothesis \ref{setup}. Then, because of  \eqref{e-56-1}, its impedance function $V_{\Theta'}(z)$ is also an identical constant in $\dC_+$ and hence, as it was shown in \cite[Remark 13]{BMkT}
\begin{equation}\label{e-old-remark}
V_{\Theta'}(z)=i\frac{1-\kappa}{1+\kappa},\quad z\in\dC_+.
\end{equation}
Applying \eqref{e-56-1} combined with \eqref{e-old-remark} we obtain
\begin{equation}\label{e-new-remark}
V_{\Theta}(z)=i\frac{1+\kappa}{1-\kappa},\quad z\in\dC_+,
\end{equation}
and hence $V_{\Theta}(z)\in\sM^{-1}_\kappa$.

Now let us assume that $V_{\Theta}(z)\in\sM^{-1}_\kappa$.
We will show that in this case the L-system $\Theta$ from the statement of Theorem \ref{t-22} is bi-unitarily equivalent to an L-system $\Theta_a$ that satisfies the conditions of Hypothesis \ref{setup-1}.
Let $V_{\Theta}(z)$ from Theorem \ref{t-22} takes a form \eqref{e-new-remark}. Let also $\mu(\lambda)$ be a Borel measure on $\mathbb R$ given by the simple formula
\begin{equation}\label{e-75-mu}
    \mu(\lambda)=\frac{\lambda}{\pi},\quad \lambda\in\mathbb R,
\end{equation}
and let $V_0(z)$ be a function with integral representation \eqref{hernev} with the measure $\mu$, i.e.,
\begin{equation*}\label{e-76-V0}
V_0(z)=\int_\bbR \left
(\frac{1}{\lambda-z}-\frac{\lambda}{1+\lambda^2}\right )
d\mu.
\end{equation*}
Then by direct calculations one immediately finds that $V_0(i)=i$ and that $V_0(z_1)-V_0(z_2)=0$ for any $z_1\ne z_2$ in $\dC_+$. Therefore, $V_0(z)\equiv i$ in $\dC_+$ and hence using \eqref{e-new-remark}   we obtain  \begin{equation}\label{e-73-id-const-1}
V_{\Theta}(z)=i\frac{1+\kappa}{1-\kappa}=\frac{1+\kappa}{1-\kappa}\,V_0(z),\quad z\in\dC_+.
\end{equation}
Let us construct a model triple $(\dot \cB,   \whB ,\cB)$ defined by \eqref{nacha1}--\eqref{nacha3} in the Hilbert space $L^2(\bbR;d\mu)$ using the measure $\mu$ from \eqref{e-75-mu} and our value of $\kappa$ (see Apendix \ref{A1} for details). Using the formula \eqref{e-52-def} for the deficiency elements $g_z(\lambda)$ of $\dot B$  and the definition of $s(\dot B, \cB)(z)$ in \eqref{charsum}  we evaluate that $s(\dot B, \cB)(z)\equiv0$ in $\dC_+$. Then, \eqref{ch1} yields $S(\dot \cB,   \whB ,\cB)(z)\equiv \kappa$ in $\dC_+$. Moreover, applying formulas \eqref{e-46-res-form-1} and \eqref{e-47-res-form-2} to the operator $\whB$ in our triple we obtain
\begin{equation}\label{e-77-resolv}
(\whB - zI )^{-1}=(\cB- zI )^{-1}+i\left(\frac{\kappa-1}{2\kappa}\right)(\cdot\, ,g_{\overline{z}})g_z.
\end{equation}
Consider the  operator $\cB_a$ which another self-adjoint extension of $\dot\cB$ and whose domain is given by the formula
\begin{equation}\label{nacha1-a}
\dom(\cB_a)=\dom (\dot \cB)\dot +\linspan\left\{\,\frac{2(\cdot)}{(\cdot)^2 +1}\right \}.
\end{equation}
The model triple $(\dot \cB,   \whB ,\cB_a)$ is now consistent with Hypothesis \ref{setup-1}. Also, \eqref{e-49-1} yields
$$
S(\dot \cB,   \whB ,\cB_a)(z)=-S(\dot \cB,   \whB ,\cB)(z)\equiv -\kappa.
$$

Let us now follow Step 2 of the proof of \cite[Theorem 7]{BMkT} to construct a model L-system $\Theta_a$  corresponding to our model triple $(\dot \cB,   \whB ,\cB_a)$ (see Appendix \ref{A1} for details). Note, that this L-system $\Theta_a$ is minimal by construction, its main operator $\whB$ has regular points in $\dC_+$ due to \eqref{e-77-resolv}, and, according to \eqref{e-60} and \eqref{e-55-1}, $W_{\Theta_a}(z)\equiv -1/\kappa$. But formulas \eqref{e6-3-6} yield that in the case under consideration  $W_{\Theta}(z)\equiv -1/\kappa$. Therefore $W_{\Theta}(z)=W_{\Theta_a}(z)$ and we can (taking into account the properties of $\Theta_a$ we mentioned) apply the Theorem on bi-unitary equivalence \cite[Theorem 6.6.10]{ABT} for L-systems $\Theta$ and $\Theta_a$. Thus we have successfully constructed an L-system $\Theta_a$ that  is bi-unitarily equivalent to the L-system $\Theta$ and satisfies the conditions of Hypothesis \ref{setup-1}.
\end{remark}
\begin{figure}
  \begin{center}
  \includegraphics[width=70mm]{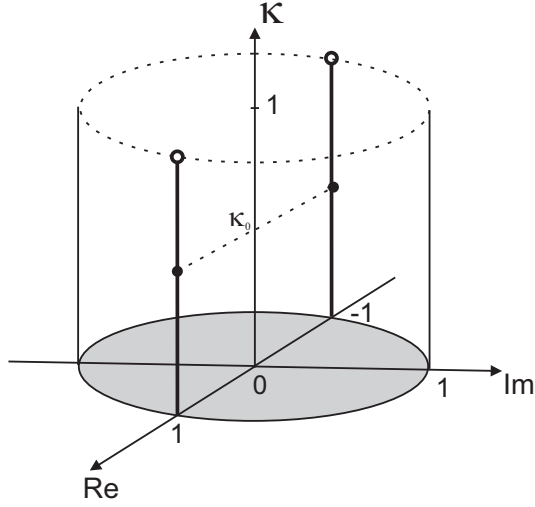}
  \caption{Parametric region $0\le\kappa<1$, $0\le\alpha<\pi$}\label{fig-1}
  \end{center}
\end{figure}
The results of Theorem \ref{t-22} can be illustrated with the help of Figure \ref{fig-1} describing the parametric region for the family of L-systems $\Theta_\alpha$ in \eqref{e-62-alpha}.  When $\kappa=0$ and $\alpha$ changes from $0$ to $\pi$,  every point on the unit circle with cylindrical coordinates $(1,\alpha,0)$, $\alpha\in[0,\pi)$ describes an L-system $\Theta_\alpha$ with $\kappa=0$. Then in this case according to \cite[Theorem 11]{BMkT}  $V_{\Theta_\alpha}(z)$ belongs to the class $\sM$ for any $\alpha\in[0,\pi)$. On the other hand, for any $\kappa_0$ such that $0<\kappa_0<1$ we apply \cite[Theorem 12]{BMkT} to conclude that only the point $(1,0,\kappa_0)$ on the wall of the cylinder is responsible for an L-system  $\Theta_{0}$ such that $V_{\Theta_{0}}(z)$ belongs to the class $\sM_{\kappa_0}$. Similarly, Theorem \ref{t-22} yields that only the point $(-1,0,\kappa_0)$ corresponds to an L-system  $\Theta_{\pi/2}$ such that $V_{\Theta_{\pi/2}}(z)$ belongs to the class $\sM_{\kappa_0}^{-1}$.

We should also note that Theorem \ref{t-6ha} and the reasoning above imply that for the family of L-systems $\Theta_\alpha$ with a fixed value of $\kappa$, ($0<\kappa<1$) \textit{the only two} family members $\Theta_{0}$ and $\Theta_{\pi/2}$ are such that the impedance functions $V_{\Theta_{0}}(z)$ and $V_{\Theta_{\pi/2}}(z)$ belong to the classes $\sM_{\kappa}$  and $\sM_{\kappa}^{-1}$, respectively.

\begin{theorem}\label{t-14}
Let  $V(z)$ belong to the generalized Donoghue class $\sM_\kappa^{-1}$, $0\le\kappa<1$. Then $V(z)$ can be realized as the impedance function $V_{\Theta_\kappa}(z)$ of an L-system $\Theta_\kappa$
 of the form \eqref{e-62} with  the triple $(\dot A, T, \hat A)$ that satisfies Hypothesis \ref{setup-1} with $A=\hat A$, the quasi-kernel of $\RE\bA$. Moreover,
 \begin{equation}\label{e-73-new}
    V(z)=V_{\Theta_\kappa}(z)= \frac{1+\kappa}{1-\kappa}\,M(\dA, \hat A)(z),\quad z\in\dC_+,
 \end{equation}
 where $M(\dA, \hat A)(z)$ is the Weyl-Titchmarsh function associated with the pair $(\dA, \hat A)$.
\end{theorem}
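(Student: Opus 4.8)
The plan is to reduce the realization to the standard Donoghue class and then to ``dress'' the resulting pair with the von Neumann parameter $\kappa$ by means of the explicit $(*)$-extension $\bA_2$ built in Section \ref{s3}. First, since $V\in\sM_\kappa^{-1}$ we have $V(i)=i\frac{1+\kappa}{1-\kappa}$, so the rescaled function
\[
M(z):=\frac{1-\kappa}{1+\kappa}\,V(z)
\]
satisfies $M(i)=i$ and inherits the representation \eqref{hernev}; hence $M\in\sM$. By the Donoghue characterization recalled after \eqref{hernev} (see \cite{D}, \cite{MT-S}), $M$ can be realized as the Weyl-Titchmarsh function $M(\dA,\hat A)$ of a pair consisting of a prime symmetric operator $\dA$ with deficiency indices $(1,1)$ and a self-adjoint extension $\hat A$; concretely one may take the model triple of Appendix \ref{A1} in $L^2(\bbR;d\nu)$ built from the normalized measure $d\nu=\frac{1-\kappa}{1+\kappa}\,d\mu$ of $M$, for which $\int_\bbR\frac{d\nu(\lambda)}{1+\lambda^2}=1$. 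Using the phase freedom of Remark \ref{r-12}, I would fix normalized deficiency elements $g_\pm\in\Ker(\dA^*\mp iI)$ so that $g_++g_-\in\dom(\hat A)$, i.e. the first condition of Hypothesis \ref{setup-1} holds; since $M(\dA,\hat A)$ depends only on $g_+$ and $\hat A$, this choice leaves $M$ unchanged.

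Next I would introduce the main operator. Define the maximal dissipative extension $T$ of $\dA$ by requiring $g_+-\kappa g_-\in\dom(T)$ as in \eqref{parpar}, so that $T$ has von Neumann parameter $\kappa$ and the triple $(\dA,T,\hat A)$ complies with Hypothesis \ref{setup-1}. Following Section \ref{s3} in the case $\calK=\kappa$, $U=-1$, I would form the $(*)$-extension $\bA=\bA_2$ of $T$ furnished by \eqref{e4-61}--\eqref{e-H2}, whose real part $\RE\bA$ has $\hat A$ as its quasi-kernel by \eqref{e-32-real}, and assemble the minimal L-system
\[
\Theta_\kappa=\begin{pmatrix} \bA & K & 1\cr \calH_+\subset\calH\subset\calH_- & & \dC\cr\end{pmatrix},\qquad Kc=c\cdot\chi_2,
\]
with $\chi_2$ given by \eqref{e-18-1}. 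By construction the quasi-kernel of $\RE\bA$ satisfies Hypothesis \ref{setup-1}, so Theorem \ref{t-22} already guarantees $V_{\Theta_\kappa}\in\sM_\kappa^{-1}$.

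It remains to pin down the impedance function exactly. Writing $V_{\Theta_\kappa}(z)=((\hat A-zI)^{-1}\chi_2,\chi_2)$ as in \eqref{e-52-1} and using $\chi_2=\sqrt{\frac{1+\kappa}{1-\kappa}}\,\chi_{2,0}$ from \eqref{e-18-1}--\eqref{e-20-1}, the fact that the quasi-kernel $\hat A$ is independent of $\kappa$ yields the scaling
\[
V_{\Theta_\kappa}(z)=\frac{1+\kappa}{1-\kappa}\,V_{\Theta_0}(z),
\]
exactly as in the proof of Lemma \ref{l-7}, where $\Theta_0$ is the companion L-system with $\kappa=0$. The key remaining step, and the main obstacle, is the identification $V_{\Theta_0}(z)=M(\dA,\hat A)(z)$ of the $\kappa=0$ impedance function with the Weyl-Titchmarsh function of the realizing pair; this is the $\kappa=0$ instance of the realization results of \cite{BMkT} and requires matching the normalization of $\chi_{2,0}=\frac{1}{\sqrt2}(\varphi+\psi)$ in \eqref{e-20-1} with that of the Donoghue vector $g_+$ through the Riesz--Berezansky operator $\calR$ and the deficiency-element formula \eqref{e-52-def} of the appendix. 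Granting this identification,
\[
V_{\Theta_\kappa}(z)=\frac{1+\kappa}{1-\kappa}\,M(\dA,\hat A)(z)=\frac{1+\kappa}{1-\kappa}\,M(z)=V(z),
\]
which is simultaneously the asserted realization and formula \eqref{e-73-new}.

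Finally, I would dispose of the degenerate case in which $V\equiv i\frac{1+\kappa}{1-\kappa}$ is an identical constant: there the main operator produced by the generic recipe may fail to have regular points in $\dC_+$, and instead one invokes the explicit model triple $(\dot\cB,\whB,\cB_a)$ of Remark \ref{r-10}, whose self-adjoint extension $\cB_a$ defined by \eqref{nacha1-a} satisfies Hypothesis \ref{setup-1} and whose L-system has precisely this constant impedance function. Thus the construction covers the full range $0\le\kappa<1$.
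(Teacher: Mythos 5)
Your argument is essentially correct but follows a genuinely different route from the paper's. The paper inverts: since $V\in\sM_\kappa^{-1}$ implies $V_1=-1/V\in\sM_\kappa$, it invokes the already-established realization theorem for $\sM_\kappa$ (\cite[Theorem 13]{BMkT}) to produce a system $\Theta'_\kappa$ satisfying Hypothesis \ref{setup} with $V_1=\frac{1-\kappa}{1+\kappa}M(\dA,\hat A_1)$, and then passes to the companion system with the Hypothesis \ref{setup-1} quasi-kernel via Theorem \ref{t-7ha} and \eqref{e-56-1}, finishing with the M\"obius relation $M(\dA,\hat A)=-1/M(\dA,\hat A_1)$ at $\alpha=\pi/2$. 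You instead rescale, $M=\frac{1-\kappa}{1+\kappa}V\in\sM$, realize $M$ by the classical Donoghue theorem, flip the sign of $g_-$ so that the same self-adjoint extension satisfies the Hypothesis \ref{setup-1} normalization, and then build the system explicitly through the $U=-1$, $\calK=\kappa$ machinery of Section \ref{s3}. What your route buys is an explicit, self-contained construction of $\Theta_\kappa$ (and a cleaner treatment of the constant case via Remark \ref{r-10}, which the paper's proof does not address separately); what it costs is that the burden of proof migrates to the identity $V_{\Theta_0}(z)=M(\dA,\hat A)(z)$, which you explicitly leave as ``granted.''

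That granted step is the one place you should be careful: the results you point to in \cite{BMkT} (Theorem 7 and the $\kappa=0$ instance of Theorem 13, reflected here in \eqref{e-imp-m}) identify the impedance with the Weyl--Titchmarsh function for systems satisfying Hypothesis \ref{setup}, i.e.\ with the quasi-kernel $\hat A_1$ for which $g_+-g_-\in\dom(\hat A_1)$, whereas your $\Theta_0$ has the Hypothesis \ref{setup-1} quasi-kernel. The gap closes, but not by normalization-matching alone: one gets $V_{\Theta_{0}}=-1/V_{\Theta_{1,0}}=-1/M(\dA,\hat A_1)=M(\dA,\hat A)$ by combining Theorem \ref{t-7ha} with the transformation formula for Weyl--Titchmarsh functions at $\alpha=\pi/2$ quoted at the end of the paper's proof --- which is precisely the paper's own closing argument. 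So your proof is sound once this is spelled out, but at that point the two approaches converge on the same final lemma; the paper simply reaches it without detouring through the explicit Section \ref{s3} construction.
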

\begin{proof}
It is easy to see that since $V(z)\in\sM_\kappa^{-1}$, then  $V_1(z)=-(1/V(z))\in\sM_\kappa$. Indeed, by direct check $V_1(z)$ is a Herglotz-Nevanlinna function and condition \eqref{e-39-kap} for $V(z)$ immediately implies condition \eqref{e-38-kap} written for $V_1(z)$, i.e., $V_1(i)=i$. Hence, $V_1(z)\in\sM_\kappa$. Consequently, according to \cite[Theorem 13]{BMkT}, $V_1(z)$ can be realized as the impedance function  of an L-system $\Theta'_\kappa$  of the form \eqref{e-62} with  the triple $(\dot A, T, \hat A_1)$ that satisfies Hypothesis \ref{setup} with $A=\hat A_1$, the quasi-kernel of $\RE\bA$. Moreover,
 \begin{equation}\label{e-73-old}
    V_1(z)= \frac{1-\kappa}{1+\kappa}\,M(\dA, \hat A_1)(z),\quad z\in\dC_+,
 \end{equation}
 where $M(\dA, \hat A_1)(z)$ is the Weyl-Titchmarsh function associated with the pair $(\dA, \hat A_1)$. Let $\Theta_\kappa$ be another L-system with the same  operators $\dA$ and $T$ as $\Theta'_\kappa$ but with the quasi-kernel $\hat A$ of $\RE\bA$ that satisfies the conditions of Hypothesis \ref{setup-1}. Then Theorem \ref{t-7ha} and \eqref{e-56-1} yield
$$
V_{\Theta_\kappa}(z)=-\frac{1}{V_{\Theta'_\kappa}(z)}=-\frac{1}{V_{1}(z)}=V(z).
$$
Therefore, $V(z)$ is realized by $\Theta_\kappa$ as required.
In order to show \eqref{e-73-new} we use \eqref{e-73-old} together with the fact that $M(\dA, \hat A)(z)=-1/M(\dA, \hat A_1)(z)$. The latter follows from the formula
\begin{equation*}\label{transm}
M(\dA, \hat A_1)=\frac{\cos \alpha \, M(\dot A, \hat A)-\sin \alpha}{
\cos\alpha +\sin \alpha \,M(\dot A, \hat A)},
\quad\alpha \in [0,\pi),
\end{equation*}
(see \cite[Subsection 2.2]{MT10}) applied for $\alpha=\pi/2$.
\end{proof}

As a closing remark to this section we will make one important observation. Let $\Theta_1$ and $\Theta_2$ be two minimal L-systems given by \eqref{e-62-1} and \eqref{e-62-2} described in the statement of Theorem \ref{t-7ha}. Both L-systems share the same main operator $T$ with the von Neumann parameter $\kappa$ while $\Theta_1$ satisfies the conditions of Hypothesis \ref{setup} and $\Theta_2$ satisfies the conditions of Hypothesis \ref{setup-1}. As we have shown above in this case the impedance functions  $V_{\Theta_1}(z)$ and $V_{\Theta_2}(z)$ belong to the classes $\sM_\kappa$ and $\sM_\kappa^{-1}$, respectively. Consequently, $V_{\Theta_1}(z)$ and $V_{\Theta_2}(z)$ admit integral representations \eqref{e-60-nu-1} with the measures $\mu_1(\lambda)$ and $\mu_2(\lambda)$, respectively. Thus,
\begin{equation}\label{e-73-remark}
   \int_\bbR\frac{d\mu_1(\lambda)}{1+\lambda^2}=\frac{1-\kappa}{1+\kappa}<1,\quad\textrm{ and }\quad  \int_\bbR\frac{d\mu_2(\lambda)}{1+\lambda^2}=\frac{1+\kappa}{1-\kappa}>1,
\end{equation}
for the same fixed value of $\kappa$. Setting,
$$
L_1=\int_\bbR\frac{d\mu_1(\lambda)}{1+\lambda^2}\quad\textrm{ and }\quad L_2=\int_\bbR\frac{d\mu_1(\lambda)}{1+\lambda^2},
$$
and solving both parts of \eqref{e-73-remark} for $\kappa$ we obtain
$$
\kappa=\frac{1-L_1}{1+L_1}=\frac{L_2-1}{L_2+1}.
$$
It also follows from \eqref{e-73-remark} that $L_1L_2=1$ or
$$
\int_\bbR\frac{d\mu_1(\lambda)}{1+\lambda^2}=1/\int_\bbR\frac{d\mu_2(\lambda)}{1+\lambda^2}.
$$

\section{The coupling of two L-systems}\label{s6}

In this Section we will heavily rely on the concept of coupling of two quasi-self-adjoint dissipative unbounded operators introduced in \cite{MT10}. Let us denote the set of all such  dissipative unbounded operators satisfying Hypothesis \ref{setup} by $\sD$.
\begin{definition}\label{op-coup}
Suppose   that
  $T_1\in \sD(\cH_1)$   and $T_2\in \sD(\cH_2)$ are maximal dissipative unbounded operators acting in the Hilbert spaces $\cH_1$ and $\cH_2$, respectively.
We say that the maximal dissipative  operator  $T\in \sD(\cH_1\oplus\cH_2)$ is an operator coupling   of $T_1$ and $T_2$, in writing, $$T=T_1\uplus T_2,$$
 if the Hilbert space $\cH_1$ is invariant for $T$, i.e.,
 $$
   \dom(T)\cap \cH_1=\dom (T_1),\quad   T\big|_{\dom(T_1)}=T_1,
$$
   and  the symmetric operator  $\dot A=   T|_{\dom(T)\cap \dom (T^*)}$ has the property
$$
\dot A\subset T_1\oplus T_2^*.
 $$
\end{definition}

 The following procedure  provides an explicit algorithm for constructing an operator coupling of two dissipative operators $T_1$ and $T_2$.
Introduce the notation,
$$
\dot A_k=T_k|_{\dom(T_k)\cap\dom(T_k^*)},\quad k=1,2,
$$and fix a basis
$g_\pm^k \in (\Ker (\dA_k^*\mp iI)$, $\|g_\pm\|=1$, $k=1,2$, in the corresponding deficiency subspaces
such that
$$
g_+^k-\kappa_kg_-^k\in \dom (T_k),\quad \text{with} \quad 0\le \kappa_k<1.
$$
It was shown in \cite[Lemma 5.3]{MT10} that there exists a one parameter family $[0, 2\pi)\ni \theta\mapsto \dot A_\theta$ of symmetric restrictions   with deficiency indices $(1,1)$ of the operator  $(\dot A_1\oplus\dot A_2)^*$ such that
$$
\dot A_1\oplus \dot A_2\subset \dot A_\theta\subset T_1\oplus T_2^*,\quad \theta\in [0,2\pi).
 $$
Without loss of generality we can pick the case when $\theta=0$ and set $\dA=\dA_0$. Then by \cite[Lemma 5.3]{MT10}
the domain of $\dot A$ admits the representation
$$\dom (\dot A)=\dom(\dot A_1\oplus \dot A_2)\dot +\cL_0,
$$
where
\begin{equation}\label{sravnieche}
\cL_0=
\begin{cases}
\linspan\left \{ \sin \alpha \,\, (g_+^1-\kappa_1 g_-^1)
\oplus\cos \alpha \left (
g_+^2- \frac{1}{\kappa_2}g_-^2 \right )\right \},& \kappa_2\ne 0
\\
\linspan\left  \{   (g_+^1-\kappa_1 g_-^1)
\oplus
\left (-\sqrt{1-\kappa_1^2}g_-^2 \right )\right \},&   \kappa_2=0
\end{cases}
\end{equation}
and
\begin{equation}\label{tana11}
\tan \alpha=\frac{1}{\kappa_2}\sqrt{\frac{1-\kappa_2^2}{1-\kappa_1^2}}, \quad \kappa_2\ne 0.
\end{equation}
In accordance with \cite[Theorem 5.4]{MT10}, we introduce the maximal dissipative extension $T$ of $\dot A$
defined as the restriction of $(\dot A_1\oplus\dot A_2)^*$ on
\begin{equation}\label{e-68-new}
\dom (T)=\dom (\dot A)\dot +\linspan \left \{ G_+-\kappa_1\kappa_2 G_-\right \},
\end{equation}
where the deficiency elements $G_\pm$ of $\dot A$ are given by
\begin{equation}\label{GGG}
G_+=  \cos \alpha \, g_+^1-\sin  \alpha \,g_+^2,
\end{equation}
$$
G_-= \cos  \, \beta  g_-^1- \sin \beta \,g^2_-.
$$
Here
$$\sin \beta =\kappa_1\sin\alpha \quad \text{and }\quad \cos \beta=\frac{1}{\kappa_2}\cos \alpha,
\quad \text{if }  \kappa_2\ne 0,
$$ and
$$\sin \beta =\kappa_1 \quad \text{and }\quad \cos \beta=\sqrt{1-\kappa_1^2},\quad  \text{if }  \kappa_2= 0.$$
By construction,
\begin{equation}\label{222}
\dot A=T|_{\dom (T_1)\cap \dom(T_2^*)},
\end{equation}
and the operator $T$ is a unique coupling of $T_1$ and $T_2$.

We should also mention that the self-adjoint extension $A$ of $\dA$ is also uniquely defined by the requirements
\begin{equation}\label{refop}
G_+-G_-\in \dom(A),\quad A=\dA^*\Big|_{\dom(A)}.
\end{equation}

The following theorem is proved in \cite{MT10}.
\begin{theorem}\label{opcoup}
Suppose   that $T=T_1\uplus T_2$ is an operator coupling  of maximal dissipative operators
  $T_k \in \sD(\cH_k)$, $k=1,2$. Denote by  $\dot A $, $\dot A_1$ and $\dot A_2$  the corresponding underlying symmetric operators with deficiency indices $(1,1)$, respectively.
That is,  $$\dot A=T|_{\dom(T)\cap\dom(T^*)}
$$
and
$$
\dot A_k=T_k|_{\dom(T_k)\cap\dom(T_k^*)}\quad k=1,2.
$$
  Then there exist self-adjoint reference operators  $A$, $A_1$,and $A_2$,   extensions of
  $\dot A$, $\dot A_1$ and $\dot A_2$, respectively,  such that
\begin{equation}\label{proizvas}S(T_1\uplus T_2,  A)=S(T_1,A_1)S(T_2, A_2).
\end{equation}
\end{theorem}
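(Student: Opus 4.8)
The plan is to reduce the operator identity \eqref{proizvas} to a single scalar identity among Livšic functions and then verify it by a direct computation with the deficiency elements \eqref{GGG}. First I would fix the three reference operators explicitly. Using the \emph{same} deficiency bases $g_\pm^k$ that enter the coupling construction (those for which $g_+^k-\kappa_k g_-^k\in\dom(T_k)$), let $A_k$ be the self-adjoint extension of $\dot A_k$ singled out by $g_+^k-g_-^k\in\dom(A_k)$, so that $s_k:=s(\dot A_k,A_k)$ is given by \eqref{charsum}; and let $A$ be the extension of $\dot A$ determined by \eqref{refop}, $G_+-G_-\in\dom(A)$. Since the two components of $G_\pm$ live in the mutually orthogonal spaces $\cH_1$ and $\cH_2$, one checks $\|G_\pm\|=1$, so $s:=s(\dot A,A)$ is well defined. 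The domain description \eqref{e-68-new} shows that the von Neumann parameter of the coupling relative to $(\dot A,A)$ is exactly $\kappa_1\kappa_2\in[0,1)$; hence by \eqref{ch1} we have $S(T_1\uplus T_2,A)=\dfrac{s-\kappa_1\kappa_2}{\kappa_1\kappa_2\,s-1}$ and $S(T_k,A_k)=\dfrac{s_k-\kappa_k}{\kappa_k s_k-1}$, and \eqref{proizvas} becomes the purely scalar assertion that these three M\"obius transforms satisfy $S=S_1S_2$. Solving $S=S_1S_2$ for $s$ (it is linear in $s$), this is equivalent to one rational identity expressing $s$ through $s_1$ and $s_2$.

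The engine of the computation is the remark that, writing $[u,v]=(\dA^*u,v)-(u,\dA^*v)$ for the Lagrange form and using $[g_z,g_\pm]=(z\pm i)(g_z,g_\pm)$, formula \eqref{charsum} recasts as $s_k(z)=[g_z^k,g_-^k]/[g_z^k,g_+^k]$ and likewise $s(z)=[G_z,G_-]/[G_z,G_+]$. Because $\dot A_1\oplus\dot A_2\subset\dot A$, the deficiency element $G_z\in\Ker(\dA^*-zI)$ is forced to be a combination $G_z=a\,g_z^1\oplus b\,g_z^2$, and the single extra requirement $G_z\in\dom(\dA^*)$ — namely that the Lagrange form of $G_z$ against the generator of $\cL_0$ in \eqref{sravnieche} vanish — fixes the ratio $a/b$. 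Substituting this $G_z$, together with the values $\cos\beta=\cos\alpha/\kappa_2$ and $\sin\beta=\kappa_1\sin\alpha$ from \eqref{GGG}, into $s=[G_z,G_-]/[G_z,G_+]$ and dividing through by $[g_z^1,g_+^1]\,[g_z^2,g_+^2]$, I obtain $s$ as a rational function of $s_1$ and $s_2$ whose coefficients involve only $\cos^2\alpha$, $\sin^2\alpha$, $\kappa_1$, and $\kappa_2$.

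The last step is to simplify these coefficients using \eqref{tana11}. From $\tan^2\alpha=\kappa_2^{-2}(1-\kappa_2^2)/(1-\kappa_1^2)$ one derives $\sin^2\alpha=(1-\kappa_2^2)/(1-\kappa_1^2\kappa_2^2)$ and $\cos^2\alpha=\kappa_2^2(1-\kappa_1^2)/(1-\kappa_1^2\kappa_2^2)$, and hence the two working identities $\cos^2\alpha+\kappa_1^2\kappa_2^2\sin^2\alpha=\kappa_2^2$ and $\sin^2\alpha(1-\kappa_1^2\kappa_2^2)=1-\kappa_2^2$. Feeding these in collapses the rational expression precisely to
$$
s=\frac{\kappa_1\kappa_2\,(\kappa_1 s_1-1)(\kappa_2 s_2-1)-(s_1-\kappa_1)(s_2-\kappa_2)}{(\kappa_1 s_1-1)(\kappa_2 s_2-1)-\kappa_1\kappa_2\,(s_1-\kappa_1)(s_2-\kappa_2)},
$$
which is exactly the value of $s$ forced by $S=S_1S_2$. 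This is the heart of the argument and its only genuine obstacle: the verification is elementary but the bookkeeping is delicate, so I would organize it by clearing the factor $\kappa_2^2$ and matching the coefficients of $s_1s_2$, $s_1$, $s_2$, and $1$ on the two sides separately, each reduction resting on one of the two boxed trigonometric identities above.

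Finally I would treat the degenerate case $\kappa_2=0$, where \eqref{sravnieche} and \eqref{GGG} take their second forms ($\sin\beta=\kappa_1$, $\cos\beta=\sqrt{1-\kappa_1^2}$). Here $\kappa_1\kappa_2=0$, so $S=-s$ and $S_2=-s_2$, and the same Lagrange-bracket computation yields $s=S_1(z)\,s_2(z)$, whence $S=-s=S_1\cdot(-s_2)=S_1S_2$; alternatively this case is recovered as the limit $\kappa_2\to0^+$ of the generic identity. Assembling the generic and the degenerate computations establishes \eqref{proizvas}.
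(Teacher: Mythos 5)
The paper itself offers no proof of Theorem \ref{opcoup}: it is quoted verbatim from \cite{MT10} (``The following theorem is proved in \cite{MT10}''), so there is nothing internal to compare against. Your proposal is a correct, self-contained verification, and it follows exactly the route that the coupling construction of Section \ref{s6} is set up for: reduce \eqref{proizvas} to a scalar identity among Liv\v{s}ic functions via \eqref{ch1}, read off the von Neumann parameter $\kappa_1\kappa_2$ of the coupling from \eqref{e-68-new}, and compute $s(\dot A,A)$ from the explicit deficiency elements. I checked the central computation. Writing $G_z=a\,g_z^1\oplus b\,g_z^2$ and imposing $[G_z,v_0]=0$ against the generator $v_0$ of $\cL_0$ gives $a\sin\alpha\,(g_z^1,g_+^1)(1-\kappa_1 s_1)=-b\cos\alpha\,(g_z^2,g_+^2)(1-\kappa_2^{-1}s_2)$; substituting into $s=\frac{z-i}{z+i}\,(G_z,G_-)/(G_z,G_+)$ with $\cos\beta=\cos\alpha/\kappa_2$, $\sin\beta=\kappa_1\sin\alpha$ and then using $\sin^2\alpha=(1-\kappa_2^2)/(1-\kappa_1^2\kappa_2^2)$, $\cos^2\alpha=\kappa_2^2(1-\kappa_1^2)/(1-\kappa_1^2\kappa_2^2)$ yields numerator $-(1-\kappa_1^2\kappa_2^2)s_1s_2+\kappa_2(1-\kappa_1^2)s_1+\kappa_1(1-\kappa_2^2)s_2$ over denominator $1-\kappa_1^2\kappa_2^2-\kappa_1(1-\kappa_2^2)s_1-\kappa_2(1-\kappa_1^2)s_2$, which is precisely the expansion of your displayed fraction, i.e.\ of the $s$ forced by $S=S_1S_2$; the $\kappa_2=0$ case likewise gives $s=S_1s_2$ and hence $S=-s=S_1S_2$. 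Two cosmetic remarks only: the Lagrange form against $\cL_0$ should be taken with respect to $(\dot A_1\oplus\dot A_2)^*$ (of which $\dA^*$ is the restriction cut out by that very condition), not with respect to $\dA^*$ itself; and it is worth recording that $\|G_-\|=1$ is itself a consequence of the two trigonometric identities you isolate, so the normalization needed for \eqref{charsum} is automatic. Neither affects the argument.
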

One important corollary to Theorem \ref{opcoup} was presented in \cite{MT10} and can be summarized as
  the following multiplication formula for the von Neumann parameters
\begin{equation}\label{mult-for}
\kappa({T_1\uplus T_2})=\kappa({T_1}) \cdot \kappa({T_2}).
\end{equation}
In this formula each $\kappa$ represents the corresponding von Neumann parameter for the operator in parentheses.

Our goal is to give a definition of a coupling of two L-systems of the form \eqref{e-62}.
Let us explain how all elements of this coupling are constructed. The main operator $T$ of the system $\Theta$ is defined to be  the coupling of main operators $T_1$ and $T_2$ of L-systems $\Theta_1$ and $\Theta_2$. That is
\begin{equation}\label{e-70-cup}
    T=T_1\uplus T_2,
\end{equation}
where operator coupling is defined by Definition \ref{op-coup} and its construction is described earlier in this section. The symmetric operator $\dA$ of the system $\Theta$ is naturally given by
$$
\dot A=T|_{\dom(T)\cap\dom( T^*)}.
$$
The rigged Hilbert space $\calH_{+} \subset \calH_1\oplus\calH_2 \subset\calH_{-}$ is constructed as the operator based rigging (see \cite[Section 2.2]{ABT}) such that $\calH_+=\dom(\dA^*)$. A self-adjoint extension $\hat A$ of $\dA$ is the quasi-kernel of $\RE\bA$ and is defined by \eqref{refop}. The $(*)$-extension $\bA$ of the system is defined uniquely based on operators $T$ and $\hat A$ and is described by formula \eqref{e3-40}. Then using formulas \eqref{e-17} and \eqref{e-18}  one gets
\begin{equation}\label{e-73-un}
       \IM\bA=(\cdot,\chi)\,\chi,\quad \chi=\sqrt{\frac{1-\kappa}{1+\kappa}}\left(\frac{1}{\sqrt2}\,\Phi- \frac{1}{\sqrt2}\,\Psi\right).
    \end{equation}
Here, $\kappa=\kappa_1\cdot\kappa_2$ (see \eqref{mult-for}) is the von Neumann parameter of the operator $T$ in \eqref{e-70-cup} while $\kappa_1$ and $\kappa_2$ are corresponding von Neumann parameters of $T_1$ and $T_2$ of $\Theta_1$ and $\Theta_2$, respectively. Also, $\Phi=\calR^{-1}G_+$ and $\Psi=\calR^{-1}(g_-)$, where $\calR$ is the Riesz-Berezansky   operator for the rigged Hilbert state space of $\Theta$.
The channel operator $K$ of the system is then given by $K c=c\cdot \chi$, $c\in\dC$ and $K^*f=(f,\chi)$, $(f\in\calH_+)$.
Now we are ready to give a definition of a coupling of two L-systems of the form \eqref{e-62}.
\begin{definition}\label{d6-7-2}
Let 
\begin{equation}\label{e6-7-31}
\Theta_i =\begin{pmatrix}
\bA_i &K_i &1\\
\calH_{+i}\subset  \calH_i\subset  \calH_{-i} &{ } &\dC
\end{pmatrix}, \quad i=1,2,
\end{equation}
be two minimal L-systems of the form \eqref{e-62} that satisfy the conditions of Hypothesis \ref{setup}.
 An L-system
 \begin{equation*}
\Theta= \begin{pmatrix} \bA&K&\ 1\cr \calH_{+} \subset \calH_1\oplus\calH_2 \subset
\calH_{-}& &\dC\cr
\end{pmatrix}
\end{equation*}
is called a \textbf{coupling of two L-systems} if the operators $\bA$, $K$ and the rigged space $\calH_{+} \subset \calH_1\oplus\calH_2 \subset
\calH_{-}$ are defined as described above. In this case we will also write
$$
\Theta=\Theta_1\cdot\Theta_2.
$$
\end{definition}
Clearly, by construction, the elements of L-system $\Theta$ satisfy the conditions of Hypothesis \ref{setup} for $\kappa=\kappa_1\cdot\kappa_2$.
\begin{theorem}\label{t6-7-3} Let an L-system $\Theta$ be the coupling of two L-systems $\Theta_1$
and $\Theta_2$ of the form \eqref{e6-7-31} that satisfy the conditions of Hypothesis \ref{setup}. Then if $z\in\rho(T_1)\cap\rho(T_2)$,  we have
\begin{equation}\label{e-91-mult}
W_\Theta(z)=W_{\Theta_1}(z)\cdot W_{\Theta_2}(z).
\end{equation}
\end{theorem}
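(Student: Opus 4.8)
The plan is to reduce the multiplicativity of the transfer functions to the already-established multiplicativity of the characteristic functions under operator coupling (Theorem \ref{opcoup}), using the reciprocal relation \eqref{e-60} as the bridge. Since $\Theta_1$ and $\Theta_2$ satisfy Hypothesis \ref{setup} by assumption, and since the coupling $\Theta$ satisfies Hypothesis \ref{setup} with $\kappa=\kappa_1\kappa_2$ by its very construction (as noted just before the theorem), each of the three transfer functions is the reciprocal of the corresponding characteristic function. Writing $S_\Theta(z)=S(\dA,T,\hat A)(z)$ and $S_{\Theta_j}(z)=S(\dA_j,T_j,\hat A_j)(z)$, relation \eqref{e-60} gives $W_\Theta(z)=1/S_\Theta(z)$ and $W_{\Theta_j}(z)=1/S_{\Theta_j}(z)$ on the relevant subsets of $\dC_+$.

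The point I would verify with care is that the self-adjoint reference operators $A,A_1,A_2$ furnished by Theorem \ref{opcoup} coincide with the quasi-kernels $\hat A,\hat A_1,\hat A_2$ of $\RE\bA,\RE\bA_1,\RE\bA_2$ that enter the characteristic functions of the L-systems; this is where the argument really hinges. Because each L-system satisfies Hypothesis \ref{setup}, its quasi-kernel is singled out by the condition $g_+^j-g_-^j\in\dom(\hat A_j)$, and a self-adjoint extension of a symmetric operator with deficiency indices $(1,1)$ is uniquely determined by the phase $\eta$ in $g_+^j-\eta g_-^j\in\dom(\,\cdot\,)$; with $\eta=1$ this forces $\hat A_j=A_j$. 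The coupling's reference operator $A$ is fixed through \eqref{refop} by $G_+-G_-\in\dom(A)$, which is precisely the defining condition of the quasi-kernel $\hat A$ of $\RE\bA$, so $\hat A=A$ as well. With this identification in place, the multiplication formula \eqref{proizvas} of Theorem \ref{opcoup} reads $S_\Theta(z)=S_{\Theta_1}(z)\,S_{\Theta_2}(z)$.

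Combining these ingredients, I would write, on the open subset of $\dC_+$ where all the reciprocals are legitimate (i.e.\ where $z\in\rho(T)\cap\rho(T_1)\cap\rho(T_2)$ and the characteristic functions are nonzero),
\begin{equation*}
W_\Theta(z)=\frac{1}{S_\Theta(z)}=\frac{1}{S_{\Theta_1}(z)\,S_{\Theta_2}(z)}=\frac{1}{S_{\Theta_1}(z)}\cdot\frac{1}{S_{\Theta_2}(z)}=W_{\Theta_1}(z)\,W_{\Theta_2}(z).
\end{equation*}

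Finally, I would upgrade this from a subset of $\dC_+$ to all $z\in\rho(T_1)\cap\rho(T_2)$ by analytic continuation. The right-hand side $W_{\Theta_1}(z)\,W_{\Theta_2}(z)$ is analytic on $\rho(T_1)\cap\rho(T_2)$, while $W_\Theta(z)$ is analytic on $\rho(T)$; since the resolvent set of the coupling contains $\rho(T_1)\cap\rho(T_2)$ (the invariance $\dom(T)\cap\cH_1=\dom(T_1)$ gives $T$ a block-triangular structure relative to $\cH_1\oplus\cH_2$), both sides are analytic on $\rho(T_1)\cap\rho(T_2)$ and agree on a set with an accumulation point, hence everywhere there. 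The main obstacle is not any single computation but the reference-operator matching of the second paragraph: once one knows the characteristic functions of $\Theta,\Theta_1,\Theta_2$ are built from exactly the self-adjoint extensions that Theorem \ref{opcoup} employs, the product formula is immediate. A purely computational alternative—writing $(\bA-zI)^{-1}$ in $2\times2$ block form from the coupling data and expanding $W_\Theta$ directly—is available but would be far more laborious and is best avoided.
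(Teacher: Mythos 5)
Your proposal is correct and follows exactly the route the paper takes: the paper's proof is the single sentence that the result ``immediately follows from Theorem \ref{opcoup} and formula \eqref{e-60}.'' Your additional care in matching the reference operators of Theorem \ref{opcoup} with the quasi-kernels fixed by \eqref{refop} and Hypothesis \ref{setup}, and in extending the identity by analytic continuation, simply makes explicit the details the paper leaves to the reader.
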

\begin{proof}
The proof of the Theorem immediately follows from Theorems \ref{opcoup} and formula \eqref{e-60}.
\end{proof}
A different type of L-system coupling with property \eqref{e-91-mult} was constructed in \cite{BT33} (see also \cite[Section 7.3]{ABT}).
\subsection*{Remark 6}\label{r-7} Definition \ref{d6-7-2} for the coupling of two L-systems can be meaningfully extended to the case when participating L-systems $\Theta_1$ and $\Theta_2$ do not satisfy the requirements of Hypothesis \ref{setup}. We use the following procedure to explain the system $\Theta=\Theta_1\cdot\Theta_2$. We begin by introducing L-systems $\Theta_1'$ and $\Theta_2'$ which are only different from $\Theta_1$ and $\Theta_2$ by the fact that their quasi-kernels $\hat A_1'$ of $\RE\bA'_1$ and $\hat A_2'$ of $\RE\bA'_2$ satisfy the conditions of Hypothesis \ref{setup} for the same values of $\kappa_1$ and $\kappa_2$. Then Definition \ref{d6-7-2} applies to them and we can obtain the coupling $\Theta'=\Theta_1'\cdot\Theta_2'$. Note that $\Theta'$ is an L-system that satisfies the conditions of Hypothesis \ref{setup}. Theorem \ref{t6-7-3} also applies and we have
$$
W_{\Theta'}(z)=W_{\Theta_1'}(z)\cdot W_{\Theta_2'}(z)=\nu_1 W_{\Theta_1}(z)\cdot \nu_2 W_{\Theta_2}(z),
$$
where $|\nu_1|=|\nu_2|=1$. Equivalently,
$$
    (\nu_1\nu_2)^{-1}W_{\Theta'}(z)=W_{\Theta_1}(z)\cdot W_{\Theta_2}(z).
$$
We set
\begin{equation}\label{e-73-nu}
W_\Theta(z)=(\nu_1\nu_2)^{-1}W_{\Theta'}(z),
\end{equation}
and recover the L-system $\Theta$ whose transfer function is $W_\Theta(z)$ and that is different from the system $\Theta'$ by changing the quasi-kernel $\hat A'$ to $\hat A$ in a way that produces the unimodular factor $(\nu_1\nu_2)^{-1}$ in \eqref{e-73-nu}. This construction of $\Theta$ is well defined and unique due to the theorems about a constant $J$-unitary factor \cite[Theorem 8.2.1]{ABT} and \cite[Proposition 3]{BMkT}.

\begin{theorem}\label{t28} Let an L-system $\Theta$ be the coupling of two L-systems $\Theta_1$
and $\Theta_2$ of the form \eqref{e6-7-31} that satisfy the conditions of Hypothesis \ref{setup}. Suppose also that $V_{\Theta_1}(z)\in\sM_{\kappa_1}$ and $V_{\Theta_2}(z)\in\sM_{\kappa_2}$. Then  $V_\Theta(z)\in\sM_{\kappa_1\kappa_2}$.
\end{theorem}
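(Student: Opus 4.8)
The plan is to reduce the assertion to an identification of the von Neumann parameter of the coupled main operator $T=T_1\uplus T_2$. Indeed, the $\sM_\kappa$-membership criterion \cite[Theorem 12]{BMkT} says that for a minimal L-system of the form \eqref{e-62} whose quasi-kernel $\hat A$ of $\RE\bA$ satisfies Hypothesis \ref{setup}, the impedance function lies in $\sM_\kappa$ exactly when $\kappa$ is the von Neumann parameter of its main operator. Thus it suffices to check two things: that the coupling $\Theta$ satisfies Hypothesis \ref{setup}, and that the von Neumann parameter of $T$ equals $\kappa_1\kappa_2$.

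The first point is built into the construction. By Definition \ref{d6-7-2} the quasi-kernel $\hat A$ of $\RE\bA$ is the self-adjoint extension singled out by \eqref{refop}, i.e. $G_+-G_-\in\dom(\hat A)$, while $\dom(T)$ is parametrized by \eqref{e-68-new}; with respect to the deficiency basis $G_\pm$ of $\dA$ these are precisely the requirements \eqref{ddoomm14} of Hypothesis \ref{setup}, as already noted immediately before the theorem. For the second point I would first observe that the hypotheses $V_{\Theta_j}(z)\in\sM_{\kappa_j}$ identify, again via \cite[Theorem 12]{BMkT}, the von Neumann parameters of $T_1$ and $T_2$ as $\kappa_1$ and $\kappa_2$. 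The cleanest way to obtain the parameter of the coupling is then the multiplication formula \eqref{mult-for}, $\kappa(T_1\uplus T_2)=\kappa(T_1)\,\kappa(T_2)=\kappa_1\kappa_2$.

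Alternatively, and more in the spirit of the transfer-function calculus developed here, I would combine Theorem \ref{t6-7-3} with \eqref{e-60}: since $W_\Theta=W_{\Theta_1}W_{\Theta_2}$ and each $W_{\Theta_j}=1/S_{\Theta_j}$, the characteristic functions multiply, $S_\Theta(z)=S_{\Theta_1}(z)\,S_{\Theta_2}(z)$, in agreement with \eqref{proizvas} of Theorem \ref{opcoup}; evaluating at $z=i$ and using \eqref{assa} that the value $S(i)$ returns the von Neumann parameter, I get $\kappa=S_\Theta(i)=S_{\Theta_1}(i)\,S_{\Theta_2}(i)=\kappa_1\kappa_2$. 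Either way, once $\Theta$ is seen to satisfy Hypothesis \ref{setup} with parameter $\kappa_1\kappa_2$, a final application of \cite[Theorem 12]{BMkT} to $\Theta$ delivers $V_\Theta(z)\in\sM_{\kappa_1\kappa_2}$. The only genuinely nontrivial ingredient is the multiplicativity of the characteristic function under coupling, equivalently the formula \eqref{mult-for}; but this is already secured by Theorem \ref{opcoup}, so the remaining work is the bookkeeping that places the coupling under Hypothesis \ref{setup} for the product parameter.
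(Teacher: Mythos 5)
Your proposal is correct and follows essentially the same route as the paper: the paper's proof likewise observes that by construction the coupling satisfies Hypothesis \ref{setup} with von Neumann parameter $\kappa=\kappa_1\kappa_2$ (via the multiplication formula \eqref{mult-for}) and then invokes \cite[Theorem 12]{BMkT} to conclude $V_\Theta(i)=i\,\frac{1-\kappa}{1+\kappa}$, hence $V_\Theta\in\sM_{\kappa_1\kappa_2}$. Your alternative derivation of $\kappa=\kappa_1\kappa_2$ through $S_\Theta=S_{\Theta_1}S_{\Theta_2}$ and \eqref{assa} is a harmless restatement of the same fact already contained in Theorem \ref{opcoup}.
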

\begin{proof}
As we mentioned earlier, the elements of L-system $\Theta$ satisfy the conditions of Hypothesis \ref{setup} for $\kappa=\kappa_1\cdot\kappa_2$. Thus,  according to \cite[Theorem 12]{BMkT}, $V_\Theta(z)$ satisfies \eqref{e-imp-m} and consequently
$$V_\Theta(i)= i\,\frac{1-\kappa}{1+\kappa},$$
and hence $V_\Theta(z)\in\sM_{\kappa_1\kappa_2}$.
\end{proof}
The following statement immediately follows from Theorem \ref{t28}.
\begin{corollary}\label{c-29} Let an L-system $\Theta$ be the coupling of two L-systems $\Theta_0$
and $\Theta_1$ of the form \eqref{e6-7-31} that satisfy the conditions of Hypothesis \ref{setup}.  Suppose also that $V_{\Theta_0}(z)\in\sM$ and $V_{\Theta_2}(z)\in\sM_{\kappa}$. Then  $V_\Theta(z)\in\sM$.
\end{corollary}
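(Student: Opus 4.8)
The plan is to read this off directly from Theorem~\ref{t28}, the only extra ingredient being the identification $\sM=\sM_0$ recorded right after \eqref{e-39-kap}. First I would translate the hypothesis on the first factor into a statement about its von Neumann parameter: since its impedance function lies in $\sM=\sM_0$, the corresponding main operator has von Neumann parameter equal to $0$, by the $\sM_\kappa$-membership correspondence already exploited in the proof of Theorem~\ref{t28} (namely \cite[Theorem 12]{BMkT}, which ties membership in $\sM_\kappa$ to the value of the parameter under Hypothesis~\ref{setup}). The second L-system contributes a main operator whose von Neumann parameter is $\kappa$, since its impedance function lies in $\sM_\kappa$.

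Next I would invoke the multiplication formula \eqref{mult-for} for the von Neumann parameter of a coupling. Writing $\kappa_1=0$ for the first system and $\kappa_2=\kappa$ for the second, the coupling $\Theta$ has main operator with von Neumann parameter $\kappa_1\kappa_2=0\cdot\kappa=0$. Feeding $\kappa_1=0$ and $\kappa_2=\kappa$ into Theorem~\ref{t28} then yields $V_\Theta(z)\in\sM_{\kappa_1\kappa_2}=\sM_0=\sM$, which is exactly the claimed absorption property.

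There is essentially no genuine obstacle here; the statement is a one-line specialization of Theorem~\ref{t28}. The only point deserving care is the bookkeeping around the identification $\sM=\sM_0$: one must make explicit that lying in the standard Donoghue class is the same as having a vanishing von Neumann parameter, so that Theorem~\ref{t28} applies verbatim with one of its two factors set to zero. Once this is noted, the multiplicativity $\kappa_1\kappa_2$ absorbs the remaining (possibly nonzero) factor and forces the coupling back into $\sM$.
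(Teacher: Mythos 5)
Your proposal is correct and matches the paper exactly: the paper gives no separate argument for Corollary \ref{c-29}, stating only that it follows immediately from Theorem \ref{t28}, which is precisely your specialization $\kappa_1=0$, $\kappa_2=\kappa$, together with the identification $\sM=\sM_0$. The additional remarks about the von Neumann parameters and the multiplication formula \eqref{mult-for} are consistent with what underlies Theorem \ref{t28} and do not change the route.
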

The coupling ``absorbtion" property described by Corollary \ref{c-29} can be enhanced as follows.
\begin{corollary}\label{c-30} Let an L-system $\Theta$ be the coupling of two L-systems $\Theta_0$ and $\Theta_1$ of the form \eqref{e6-7-31} and let $V_{\Theta_0}(z)\in\sM$. Then  $V_\Theta(z)\in\sM$.
\end{corollary}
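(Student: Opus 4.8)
The plan is to reduce the claim to the multiplicativity of the von Neumann parameter under coupling, together with the $\kappa=0$ characterization of the class $\sM$. Since the two L-systems are not assumed to satisfy Hypothesis~\ref{setup}, the coupling $\Theta=\Theta_0\cdot\Theta_1$ must be read in the extended sense of Remark~6: one first passes to systems $\Theta_0',\Theta_1'$ sharing the same main operators $T_0,T_1$ (and hence the same von Neumann parameters $\kappa_0,\kappa_1$) but whose quasi-kernels obey Hypothesis~\ref{setup}, forms the coupling $\Theta'=\Theta_0'\cdot\Theta_1'$, and then reinstates the correct unimodular factor. The upshot relevant for us is that the main operator of $\Theta$ is the operator coupling $T=T_0\uplus T_1$, exactly as for $\Theta'$.

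First I would apply the membership criterion of \cite[Theorem~11]{BMkT}, according to which the impedance function of a minimal L-system of the form \eqref{e-62} belongs to $\sM$ if and only if the von Neumann parameter of its main operator vanishes, regardless of the quasi-kernel. The hypothesis $V_{\Theta_0}(z)\in\sM$ therefore forces $\kappa_0=0$. Next I would invoke the multiplication formula \eqref{mult-for}, namely $\kappa(T_0\uplus T_1)=\kappa(T_0)\kappa(T_1)$, which involves only the main operators and is insensitive to the reference self-adjoint extensions; it gives that the von Neumann parameter of $T$ equals $\kappa=\kappa_0\kappa_1=0$ no matter what $\kappa_1\in[0,1)$ is. Finally, applying \cite[Theorem~11]{BMkT} in the reverse direction to $\Theta$ itself, $\kappa=0$ yields $V_\Theta(z)\in\sM$, which is the assertion.

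The step demanding the most care is the legitimacy of using \cite[Theorem~11]{BMkT} and \eqref{mult-for} when the coupled systems fail Hypothesis~\ref{setup}; the resolution is that all of these statements concern only the main operators and their von Neumann parameters, quantities untouched by the choice of quasi-kernel. As an alternative route I would argue through Theorem~\ref{opcoup}: the construction of Remark~6 still produces $W_\Theta(z)=W_{\Theta_0}(z)W_{\Theta_1}(z)$, so by \eqref{e-56-arb} the characteristic function of the triple attached to $\Theta$ is the product $S(z)=S_0(z)S_1(z)$ up to a unimodular constant, whence $\kappa=S(i)=S_0(i)S_1(i)=0$ by \eqref{assa} together with $S_0(i)=\kappa_0=0$, once more placing $V_\Theta(z)$ in $\sM$. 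The genuinely new content beyond Corollary~\ref{c-29} is simply that neither a Hypothesis~\ref{setup} normalization of $\Theta_1$ nor any information about the class of $V_{\Theta_1}(z)$ is required, because the vanishing of $\kappa_0$ already annihilates the product $\kappa_0\kappa_1$.
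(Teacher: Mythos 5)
Your proposal is correct and follows essentially the same route as the paper's own proof: interpret the coupling via Remark 6, note that the main operator is still $T=T_0\uplus T_1$ so the multiplication formula \eqref{mult-for} gives $\kappa=\kappa_0\kappa_1=0$, and then apply the $\sM$-membership criterion of \cite[Theorem 11]{BMkT} in both directions. Your explicit justification that these facts depend only on the main operators (and your alternative transfer-function argument) merely spell out steps the paper leaves implicit.
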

\begin{proof}
The procedure of forming the coupling of two L-systems that don't necessarily obey the conditions of Hypothesis \ref{setup} is described in Remark \ref{r-7}. It is clear that the main operator $T$ of such coupling is also found via \eqref{e-70-cup} and its von Neumann parameter $\kappa$ is still a product given by \eqref{mult-for}. Hence in our case $\kappa=0$ regardless of whether $\Theta_0$ and $\Theta_1$ satisfy the conditions of Hypothesis \ref{setup}. Therefore applying \cite[Theorem 11]{BMkT} yields $V_\Theta(z)\in\sM$.
\end{proof}
\begin{corollary}\label{c-19} Let an L-system $\Theta$ be the coupling of two L-systems $\Theta_1$ and $\Theta_2$ of the form \eqref{e6-7-31} that satisfy the conditions of Hypothesis  \ref{setup-1}. Suppose also that $V_{\Theta_1}(z)\in\sM_{\kappa_1}^{-1}$ and $V_{\Theta_2}(z)\in\sM_{\kappa_2}^{-1}$. Then $\Theta$ satisfies the conditions of Hypothesis \ref{setup} and $V_\Theta(z)\in\sM_{\kappa_1\kappa_2}$.
\end{corollary}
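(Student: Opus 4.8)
The plan is to reduce the claim to its already proven $\sM_\kappa$-counterpart, Theorem~\ref{t28}, by exploiting the one-to-one correspondence between systems obeying Hypothesis~\ref{setup} and Hypothesis~\ref{setup-1} (Theorem~\ref{t-7ha}) and then carefully tracking the unimodular factors produced by the extended coupling construction of Remark~\ref{r-7}. Since $\Theta_1$ and $\Theta_2$ both satisfy Hypothesis~\ref{setup-1}, they lie outside the scope of Definition~\ref{d6-7-2}, so the coupling $\Theta=\Theta_1\cdot\Theta_2$ has to be assembled by the recipe of Remark~\ref{r-7}; the crux of the argument will be to show that the two sign factors arising there cancel, which simultaneously forces $\Theta$ into the class $\sM_{\kappa_1\kappa_2}$ and into Hypothesis~\ref{setup}.

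First I would attach to each $\Theta_k$ ($k=1,2$) a companion L-system $\Theta_k'$ built on the same symmetric operator $\dA_k$ and the same main operator $T_k$, but whose quasi-kernel $\hat A_k'$ of $\RE\bA_k'$ satisfies Hypothesis~\ref{setup} for the same von Neumann parameter $\kappa_k$. By Theorem~\ref{t-7ha} these companions satisfy $W_{\Theta_k'}(z)=-W_{\Theta_k}(z)$ and $V_{\Theta_k'}(z)=-1/V_{\Theta_k}(z)$, so in the notation of Remark~\ref{r-7} the relevant unimodular factor is $\nu_k=-1$. Moreover, the elementary computation used in the proof of Theorem~\ref{t-14} shows that $V_{\Theta_k}(z)\in\sM_{\kappa_k}^{-1}$ is equivalent to $V_{\Theta_k'}(z)\in\sM_{\kappa_k}$.

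Next I would note that passing from $\Theta_k$ to $\Theta_k'$ alters only the reference self-adjoint extension, not the main operators $T_1,T_2$ nor the operator coupling $T=T_1\uplus T_2$; hence the von Neumann parameter of $\Theta$ is again $\kappa=\kappa_1\kappa_2$ by the multiplication formula \eqref{mult-for}. Since $\Theta_1'$ and $\Theta_2'$ now meet the hypotheses of Definition~\ref{d6-7-2}, their genuine coupling $\Theta'=\Theta_1'\cdot\Theta_2'$ is a bona fide L-system satisfying Hypothesis~\ref{setup} with parameter $\kappa_1\kappa_2$, and because $V_{\Theta_1'}(z)\in\sM_{\kappa_1}$ and $V_{\Theta_2'}(z)\in\sM_{\kappa_2}$, Theorem~\ref{t28} gives $V_{\Theta'}(z)\in\sM_{\kappa_1\kappa_2}$.

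Finally, the recovery step \eqref{e-73-nu} of Remark~\ref{r-7} reads $W_\Theta(z)=(\nu_1\nu_2)^{-1}W_{\Theta'}(z)$. With $\nu_1=\nu_2=-1$ the factor $(\nu_1\nu_2)^{-1}$ equals $1$, so no correction of the quasi-kernel is required and $\Theta$ simply coincides with $\Theta'$ (equivalently, $W_\Theta(z)=W_{\Theta'}(z)$). Consequently $\Theta$ inherits Hypothesis~\ref{setup}, and by \eqref{e6-3-6} with $J=1$ the impedance functions agree, $V_\Theta(z)=V_{\Theta'}(z)\in\sM_{\kappa_1\kappa_2}$. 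The one genuinely delicate point is precisely this cancellation of the two factors $\nu_k=-1$: had only one of the systems been of Hypothesis~\ref{setup-1} type, a single surviving sign would flip the transfer function and land the coupling in the inverse class instead, so the bookkeeping of these factors is what the proof really turns on.
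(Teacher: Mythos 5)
Your proposal is correct and follows essentially the same route as the paper: build the companion systems $\Theta_k'$ obeying Hypothesis \ref{setup}, observe via Theorem \ref{t-7ha} that $\nu_1=\nu_2=-1$ so the unimodular factors in \eqref{e-73-nu} cancel, conclude $W_\Theta=W_{\Theta'}$ and hence $\Theta=\Theta'$ (the paper spells this last identification out slightly more via the constant $J$-unitary factor theorem and the matching of the quasi-kernels' von Neumann parameters), and finish with Theorem \ref{t28}. No gaps.
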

\begin{proof}
In order to create the coupling of two systems $\Theta_1$ and $\Theta_2$ that do not obey the conditions of Hypothesis \ref{setup} we follow the procedure described in Remark \ref{r-7}. We know, however, that according to Theorem \ref{t-7ha} and \eqref{e-55-1} both unimodular factors $\nu_1$ and $\nu_2$ in \eqref{e-73-nu} are such that $\nu_1=\nu_2=-1$. Then \eqref{e-73-nu} yields
$$
W_\Theta(z)=((-1)(-1))^{-1}W_{\Theta'}(z)=W_{\Theta'}(z)=W_{\Theta_1'}(z)\cdot W_{\Theta_2'}(z),
$$
where $W_{\Theta'}(z)$ is the transfer function of the L-system $\Theta'$ that obeys Hypothesis \ref{setup}.

Let us show that in fact the L-systems $\Theta$ and $\Theta'$ coincide.  Recall  (see Remark \ref{r-7}) that $\Theta$ and $\Theta'$ can only be different by the quasi-kernels $\hat A$ and $\hat A'$ and thus share the same operator $T$. Then according to the theorem about a constant $J$-unitary factor \cite[Theorem 8.2.1]{ABT},   $W_{\Theta}(z)=\nu W_{\Theta_1}(z)$, where $|\nu|=1$ is a unimodular complex number. In our case, however, we have just shown that $W_\Theta(z)=W_{\Theta'}(z)$ and hence $\nu=1$. As it can be seen in the proof of \cite[Theorem 8.2.1]{ABT}
then only possibility for  $\nu=1$ occurs when von Neumann's parameters for the quasi-kernels $\hat A$ and $\hat A'$ match (see also \cite[Theorem 4.4.6]{ABT}). But this implies that $\hat A=\hat A'$ and thus $\Theta$ and $\Theta'$ coincide. Consequently, $\Theta$  satisfies the conditions of Hypothesis \ref{setup}.
Also, both $\Theta_1'$ and $\Theta_2'$ share the same main operators $T_1$ and $T_2$ with $\Theta_1$ and $\Theta_2$, respectively. Therefore, according to Theorem \ref{t28},  $V_\Theta(z)=V_{\Theta'}(z)\in\sM_{\kappa_1\kappa_2}$.
\end{proof}
\begin{corollary}\label{c-23} Let an L-system $\Theta$ be the coupling of two L-systems $\Theta_1$ and $\Theta_2$ of the form \eqref{e6-7-31} that both satisfy the conditions of Hypotheses \ref{setup} and  \ref{setup-1}, respectively. Suppose also that $V_{\Theta_1}(z)\in\sM_{\kappa_1}$ and $V_{\Theta_2}(z)\in\sM_{\kappa_2}^{-1}$. Then $\Theta$ satisfies the conditions of Hypothesis \ref{setup-1} and $V_\Theta(z)\in\sM_{\kappa_1\kappa_2}^{-1}$.
\end{corollary}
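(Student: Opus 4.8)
The plan is to follow the route used for Corollary \ref{c-19}, but keeping careful track of the unimodular factors produced by the coupling procedure of Remark \ref{r-7}. Since $\Theta_2$ obeys Hypothesis \ref{setup-1} rather than Hypothesis \ref{setup}, the coupling $\Theta=\Theta_1\cdot\Theta_2$ is not covered directly by Definition \ref{d6-7-2}, so I would first pass to the auxiliary systems $\Theta_1'$ and $\Theta_2'$ of Remark \ref{r-7}: they share the main operators $T_1$, $T_2$ and the parameters $\kappa_1$, $\kappa_2$, but their quasi-kernels comply with Hypothesis \ref{setup}. Writing $W_{\Theta_j'}=\nu_j W_{\Theta_j}$ with $|\nu_j|=1$, I would read off the two factors from Theorem \ref{t-7ha}: because $\Theta_1$ already satisfies Hypothesis \ref{setup} we have $\Theta_1'=\Theta_1$ and $\nu_1=1$, whereas $\Theta_2$ satisfies Hypothesis \ref{setup-1}, so \eqref{e-55-1} gives $W_{\Theta_2'}=-W_{\Theta_2}$ and hence $\nu_2=-1$.

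Next I would form $\Theta'=\Theta_1'\cdot\Theta_2'$, which by Definition \ref{d6-7-2} and the observation following it is a genuine coupling satisfying Hypothesis \ref{setup} with von Neumann parameter $\kappa_1\kappa_2$; in particular $W_{\Theta'}=W_{\Theta_1'}\cdot W_{\Theta_2'}$ by Theorem \ref{t6-7-3}. Substituting into \eqref{e-73-nu} then gives
\begin{equation*}
W_\Theta(z)=(\nu_1\nu_2)^{-1}W_{\Theta'}(z)=\big((1)(-1)\big)^{-1}W_{\Theta'}(z)=-W_{\Theta'}(z).
\end{equation*}
By construction (Remark \ref{r-7}) the systems $\Theta$ and $\Theta'$ differ only in their quasi-kernels $\hat A$ and $\hat A'$, and therefore share the same main operator $T=T_1\uplus T_2$, whose von Neumann parameter equals $\kappa_1\kappa_2$ by \eqref{mult-for}.

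The decisive step, which I expect to be the main obstacle, is to convert the relation $W_\Theta=-W_{\Theta'}$ into a statement about the quasi-kernel of $\Theta$. Here I would invoke the theorem on a constant $J$-unitary factor \cite[Theorem 8.2.1]{ABT} exactly as in Corollary \ref{c-19}: two minimal L-systems with the same main operator have transfer functions related by a unimodular constant $\nu$, and the admissible value of $\nu$ is governed by the von Neumann parameter of the quasi-kernel. Whereas $\nu=1$ pins down $\hat A=\hat A'$ (the Hypothesis \ref{setup} case treated in Corollary \ref{c-19}), the present value $\nu=-1$ corresponds, through the sign relation \eqref{e-55-1} of Theorem \ref{t-7ha}, precisely to the passage from a quasi-kernel obeying Hypothesis \ref{setup} to the unique one obeying Hypothesis \ref{setup-1}. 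Since $\Theta'$ satisfies Hypothesis \ref{setup}, I conclude that $\Theta$ satisfies Hypothesis \ref{setup-1}.

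Finally, with $\Theta$ shown to comply with Hypothesis \ref{setup-1} while carrying the parameter $\kappa=\kappa_1\kappa_2$, the membership criterion of Theorem \ref{t-22} yields $V_\Theta(z)\in\sM_{\kappa_1\kappa_2}^{-1}$ at once. As an independent check one may instead apply \eqref{e-56-1} to the pair $(\Theta',\Theta)$ together with $V_{\Theta'}(z)\in\sM_{\kappa_1\kappa_2}$ (Theorem \ref{t28}), which gives $V_\Theta(i)=-1/V_{\Theta'}(i)=i\,\frac{1+\kappa_1\kappa_2}{1-\kappa_1\kappa_2}$, the defining normalization \eqref{e-39-kap} of $\sM_{\kappa_1\kappa_2}^{-1}$. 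If $V_\Theta$ should happen to reduce to an identical constant, the same conclusion follows from the constant-case analysis of Remark \ref{r-10}.
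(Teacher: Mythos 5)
Your proposal is correct and follows essentially the same route as the paper: the paper's proof likewise replicates the argument of Corollary \ref{c-19} with $\nu_1=1$, $\nu_2=-1$, obtains $W_\Theta(z)=-W_{\Theta'}(z)$, identifies this via Theorem \ref{t-7ha} with the transfer function of the system sharing $\dA$ and $T$ with $\Theta'$ but whose quasi-kernel obeys Hypothesis \ref{setup-1}, and then uses the constant $J$-unitary factor theorem to conclude that $\Theta$ itself satisfies Hypothesis \ref{setup-1} and $V_\Theta(z)\in\sM_{\kappa_1\kappa_2}^{-1}$. Your explicit appeal to Theorem \ref{t-22} for the final membership claim, and the independent check via \eqref{e-56-1}, simply make precise the ``similar reasoning'' the paper leaves implicit.
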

\begin{proof}
In order to prove the statement of the corollary we replicate all the steps of the proof of Corollary \ref{c-19} above with  $\nu_1=1$ and $\nu_2=-1$ implying
$$
W_\Theta(z)=((1)(-1))^{-1}W_{\Theta'}(z)=-W_{\Theta'}(z)=-W_{\Theta_1'}(z)\cdot W_{\Theta_2'}(z)=W_{\Theta''}(z),
$$
where $W_{\Theta''}(z)$ is the transfer function of the L-system $\Theta''$ that  has the same  operators $\dA$ and $T$ as $\Theta'$ but with the quasi-kernel $\hat A''$ of $\RE\bA''$ that satisfies the conditions of Hypothesis \ref{setup-1}.
Then similar reasoning  is used to show that the L-system $\Theta$ satisfies the conditions of Hypothesis \ref{setup-1}  and that $V_\Theta(z)\in\sM_{\kappa_1\kappa_2}^{-1}$.
\end{proof}
Note that the conclusion of Corollary \ref{c-23} remains valid if $V_{\Theta_1}(z)\in\sM_{\kappa_1}^{-1}$ and $V_{\Theta_2}(z)\in\sM_{\kappa_2}$. Figure \ref{fig-2} above is provided to illustrate Theorems \ref{t6-7-3}--\ref{t28} as well as Corollaries \ref{c-29}--\ref{c-23} including the ``absorbtion property".

\begin{figure}
  \begin{center}
  \includegraphics[width=120mm]{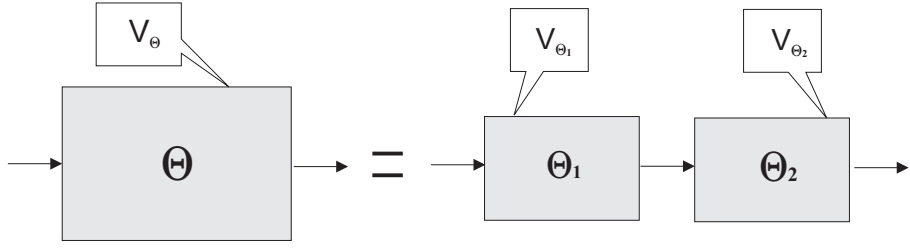}
  \caption{Coupling $\Theta=\Theta_1\cdot\Theta_2$ and its impedance}\label{fig-2}
  \end{center}
\end{figure}

\section{The Limit coupling}\label{s7}

Let $\Theta_1$, $\Theta_2$,\ldots, $\Theta_n$ be  L-systems of the form \eqref{e-62}. Relying on Definition \ref{d6-7-2} and  Remark \ref{r-7} from the previous section we can inductively define the coupling of $n$ L-systems by
\begin{equation}\label{e77-ind}
    \Theta^{(n)}=\prod_{k=1}^n \Theta_k=\Theta_1\cdot\Theta_2\cdot\ldots\cdot \Theta_n,
\end{equation}
with all the elements of the L-system $\Theta^{(n)}$  constructed according to the repeated use of the coupling procedure described in Section \ref{s6}. In particular, if each of the L-systems $\Theta_k$, ($k=1,\ldots,n$) satisfies Hypothesis \ref{setup}, then so does the L-system $\Theta^{(n)}$.

\begin{definition}\label{d17}
Let $\Theta_1,\, \Theta_2,\, \Theta_3,\ldots$  be an infinite sequence of L-systems with von Neumann's parameters $\kappa_1$, $\kappa_2$, $\kappa_3$, \ldots, respectively. We say that a minimal  L-system $\Theta$ is a \textbf{limit coupling } of the sequence $\{\Theta_n\}$ and write
\begin{equation}\label{e78-limit}
  \Theta=\lim_{n\to\infty}  \Theta^{(n)}=\lim_{n\to\infty}\prod_{k=1}^n \Theta_k=\Theta_1\cdot\Theta_2\cdot\ldots\cdot \Theta_n\cdot\ldots,
\end{equation}
if the von Neumann parameter $\kappa$ of $\Theta$ is such that $$\kappa=\lim_{n\to\infty}\prod_{k=1}^n\kappa_n,\quad |\kappa|<1,$$ and
$$
W_\Theta(z)=\lim_{n\to\infty}\prod_{k=1}^n W_{\Theta_n}(z), \quad z\in\dC_-.
$$
\end{definition}
Formula \eqref{e77-ind} and Definition \ref{d17} allow us to introduce the concepts of the ``power" of an L-system and of the {``system attractor"}. Indeed, let $\Theta$ be an L-system of the form \eqref{e-62} satisfying the conditions of Hypothesis \ref{setup}. We refer to the repeated self-coupling
\begin{equation}\label{e79-power}
    \Theta^{n}=\underbrace{\Theta\cdot\Theta\cdot\ldots\cdot \Theta}_{n\textrm{ times}}
\end{equation}
as an $n$-th \textbf{power} of an L-system $\Theta$. Furthermore, the L-system $\Xi$ such that
\begin{equation}\label{e80-limit}
  \Xi=\lim_{n\to\infty}  \Theta^{n},
\end{equation}
will be called a \textbf{system attractor}. Let $\kappa$ and $\kappa_\Xi$ be the von Neumann parameters of the L-systems $\Theta$ and $\Xi$, respectively.  It follows from Definition \ref{d17} and the fact that $|\kappa|<1$ that
$$
\kappa_\Xi=\lim_{n\to\infty}\kappa^n=0.
$$
Moreover, it is known (see \cite[Section 6.3]{ABT}) that $|W_\Theta(z)|<1$ for all $z\in\dC_-$. Hence
\begin{equation}\label{e-81-W}
   W_\Xi(z)=\lim_{n\to\infty} \big(W_{\Theta}(z)\big)^n=0, \quad z\in\dC_-.
\end{equation}
Consequently, for the impedance function $V_\Xi (z)$ relation \eqref{e6-3-6} yields
$$
V_\Xi (z) = i \frac{W_\Xi (z) - 1}{W_\Xi (z) + 1} \equiv-i, \quad z\in\dC_-.
$$
\begin{figure}
  \begin{center}
  \includegraphics[width=120mm]{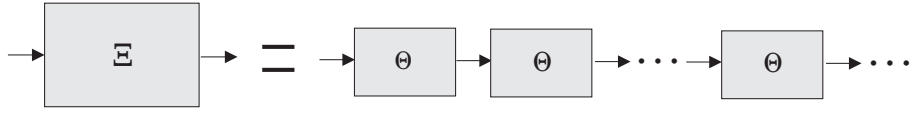}
  \caption{System attractor $\Xi$}\label{fig-3}
  \end{center}
\end{figure}
Since $V_\Xi (z)$ is a Herglotz-Nevanlinna function, then $V_\Xi (z)\equiv i$ for all $z\in\dC_+$. To show that a system attractor  exists we consider the model triple $(\dot\cB,\whBo,\cB)$ of the form \eqref{nacha1}--\eqref{nacha3} with $S(\dot\cB,\whBo,\cB)(i)=0$ (see Apendix \ref{A1} for details) and a model system
\begin{equation}\label{e-82-attract}
   \Xi=\begin{pmatrix} \dB_0&K_0&\ 1\cr \calH_+ \subset \calH \subset
\calH_-& &\dC\cr \end{pmatrix},
\end{equation}
constructed in \cite[Section 5]{BMkT} upon this triple. Recall, that this system $\Xi$ the state-space operator $\dB_0\in[\calH_+,\calH_-]$ is a  $(*)$-extension of $\whBo$ such that $\RE\dB_0\supset \cB=\cB^*$, $K_0^*f=(f,g_0)$, $(f\in\calH_+)$, and $\kappa=0$. It is shown in \cite[Theorem 7]{BMkT} that for the system $\Xi$ we have $V_\Xi(z)=M(\dot\cB,\cB)(z)$, $z\in\dC_+$. Taking into account that $\kappa=0$, we note that resolvent formula \eqref{e-46-res-form-1}--\eqref{e-47-res-form-2} takes the form
\begin{equation}\label{e-85-res-form}
(\whBo - zI )^{-1}=(\cB- zI )^{-1}-\left (M(\dot \cB, \cB)(z)-i\right )^{-1}(\cdot\, ,g_{\overline{z}})g_z,\;
\end{equation}
$$
 z\in\rho(\whBo)\cap \rho(\cB).
$$
In the case when  $M(\dot\cB,\cB)(z)= i$ for all $z\in\dC_+$, formula \eqref{blog} would imply that $s(\dot\cB,\cB)(z)\equiv0$ in the upper half-plane. Then, \eqref{e-85-res-form} (see also \cite[Lemma 5.1]{MT-S}) yields that all the points $z\in\dC_+$ are eigenvalues for $\whBo$. Consequently, the point spectrum of the dissipative operator $\whBo$ fills in the whole open upper half-plane $\bbC_+$.

The following lemma will be a useful tool in understanding the geometric structure of system attractors.
\begin{lemma}\label{l-22}
Let $\Theta_1$ and $\Theta_2$ be two minimal L-systems of the form \eqref{e6-7-31} such that the corresponding main operators $T_1$ and $T_2$ of these systems have the von Neumann parameters $\kappa_1=\kappa_2=0$. Let also $V_{\Theta_1}(z)=V_{\Theta_2}(z)= i$ for all $z\in\dC_+$. Then $\Theta_1$ and $\Theta_2$ are bi-unitary equivalent to each other.
\end{lemma}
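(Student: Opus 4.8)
The plan is to deduce bi-unitary equivalence from the coincidence of the transfer functions, invoking \cite[Theorem 6.6.10]{ABT}: since $\Theta_1$ and $\Theta_2$ are minimal, it suffices to exhibit a nonempty subset of $\rho(T_1)\cap\rho(T_2)\cap\dC_\pm$ on which $W_{\Theta_1}(z)=W_{\Theta_2}(z)$. The only delicate point is to choose the half-plane in which this comparison is actually meaningful.

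First I would observe that the comparison cannot be carried out in $\dC_+$. Indeed, as recorded above for the attractor model, the condition $V_\Theta(z)\equiv i$ in $\dC_+$ forces the point spectrum of the maximal dissipative main operator to fill the whole open upper half-plane, so that $\rho(T_j)\cap\dC_+=\emptyset$ and the transfer functions are simply not defined there. Instead I would work in $\dC_-$: because $T_1$ and $T_2$ are maximal dissipative, their spectra lie in $\overline{\dC_+}$, whence $\dC_-\subset\rho(T_1)\cap\rho(T_2)$ and both transfer functions are well defined on all of $\dC_-$.

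Next I would pin down the values of the impedance functions on $\dC_-$. Each $V_{\Theta_j}$ is a Herglotz--Nevanlinna function given by an integral representation of the form \eqref{hernev} against a real measure, hence satisfies the reflection symmetry $V_{\Theta_j}(\bar z)=\overline{V_{\Theta_j}(z)}$. Combined with the hypothesis $V_{\Theta_j}(z)=i$ on $\dC_+$, this yields $V_{\Theta_j}(z)=-i$ for every $z\in\dC_-$ and $j=1,2$. Feeding this into the second relation of \eqref{e6-3-6}, and using that the direction operator is $J=1$, I get
\[
W_{\Theta_j}(z)=(1+iV_{\Theta_j}(z))^{-1}(1-iV_{\Theta_j}(z))=(1+1)^{-1}(1-1)=0,\quad z\in\dC_-,\ j=1,2.
\]
Thus $W_{\Theta_1}(z)=W_{\Theta_2}(z)=0$ on the nonempty set $\dC_-\subset\rho(T_1)\cap\rho(T_2)$, and \cite[Theorem 6.6.10]{ABT} delivers the bi-unitary equivalence of $\Theta_1$ and $\Theta_2$.

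I expect the only conceptual obstacle to be the realization that the naive attempt to match transfer functions in $\dC_+$ is void (both are undefined because the spectrum of the dissipative main operator saturates $\dC_+$), and that one must therefore pass to the resolvent set in the lower half-plane and recover the value of the impedance function there from the Herglotz reflection principle. Once this is in place, the remaining computation is an immediate substitution.
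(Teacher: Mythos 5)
Your argument is correct, but it takes a genuinely different route from the paper's. You reduce everything to the coincidence of the transfer functions on $\dC_-$: maximal dissipativity of $T_1,T_2$ puts all of $\dC_-$ in $\rho(T_1)\cap\rho(T_2)$, the Herglotz reflection $V_{\Theta_j}(\bar z)=\overline{V_{\Theta_j}(z)}$ gives $V_{\Theta_j}\equiv -i$ there, hence $W_{\Theta_1}\equiv W_{\Theta_2}\equiv 0$ on $\dC_-$ by \eqref{e6-3-6}, and \cite[Theorem~6.6.10]{ABT} (as quoted in Section~\ref{s2}) finishes the job. Your insistence on working in the lower half-plane is exactly the right precaution: as the paper notes just before the lemma, $V\equiv i$ in $\dC_+$ forces the point spectrum of the main operator to fill $\dC_+$, so $\rho(T_j)\cap\dC_+=\emptyset$ and the transfer functions cannot be compared there. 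The paper instead argues at the level of the triples: from $V_{\Theta_k}\equiv i$ and $\kappa_k=0$ it deduces $s(\dA_k,\hat A_k)=S(\dA_k,T_k,\hat A_k)\equiv 0$, invokes \cite{MT-S} (the characteristic function is a complete unitary invariant of a prime triple) to obtain a unitary $U$ intertwining $(\dA_1,T_1,\hat A_1)$ and $(\dA_2,T_2,\hat A_2)$, and then explicitly lifts $U$ to $(U_+,U,U_-)$ on the rigged spaces, verifying $\IM\bA_2=U_-\IM\bA_1U_+^{-1}$ and $\RE\bA_2=U_-\RE\bA_1U_+^{-1}$, hence $\bA_2=U_-\bA_1U_+^{-1}$ together with $U_-K_1=K_2$. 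Your proof is shorter and uses the realization-uniqueness theorem as a black box; the paper's is constructive, producing the intertwining operators explicitly and, in particular, the unitary equivalence of the underlying prime triples, which is what is actually exploited in the subsequent discussion of the geometric structure of system attractors. Both establish the lemma as stated.
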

\begin{proof}
Let $\dA_k$ and $\hat A_k$, ($k=1,2$) be the corresponding symmetric operators and quasi-kernels of $\RE\bA_k$, ($k=1,2$) in L-systems $\Theta_1$ and $\Theta_2$. Then, as it was explained in Section \ref{s4} and Remark \ref{r-10}, $s(\dA_k,\hat A_k)=S(\dA_k,T_k,\hat A_k)\equiv0$, ($k=1,2$) in $\dC_+$. Hence, (see \cite{MT-S}) the triplets $(\dA_1,T_1,\hat A_1)$ and
$(\dA_2,T_2,\hat A_2)$ are unitarily equivalent to each other, that is there exists an isometric operator $U$ from $\calH_1$ onto $\calH_2$ such that
\begin{equation}\label{e-93-un-eq}
U T_1=T_2 U,\quad U \dA_1=\dA_2 U,\quad U \hat A_1=\hat A_2 U.
\end{equation}
Let $g_{\pm,k}$  be deficiency elements $g_{\pm,k}\in \Ker (\dot A^*_k\mp iI)$, $\|g_{\pm,k}\|=1$, ($k=1,2$). Then
$$
g_{+,1}- e^{2i\alpha}g_{-,1}\in \dom (\hat A_1),\quad g_{+,2}- e^{2i\beta}g_{-,2}\in \dom (\hat A_2),
$$
for some $\alpha,\,\beta\in[0,\pi)$. Also, $g_{+,1}\in\dom(T_1)$ and $g_{+,2}\in\dom(T_2)$. Moreover, \eqref{e-93-un-eq} implies $U g_{+,1}=g_{+,2}\in\dom(T_2)$ and $U e^{2i\alpha}g_{-,1}=e^{2i\beta}g_{-,2}$.

Following \cite[Theorem 6.6.10]{ABT} we introduce $U_+=U|_{\calH_{+1}}$ to be an isometry from $\calH_{+1}$ onto $\calH_{+2}$, and $U_-=(U_+^*)^{-1}$ is an isometry from $\calH_{-1}$ onto $\calH_{-2}$. Performing similar to Section \ref{s3} steps we obtain
\begin{equation*}\label{e-17-real-atr}
    \begin{aligned}
    \RE\bA_1&=\dA^*_1+\frac{i}{2}(\cdot,\varphi_1+e^{2i\alpha}\psi_1)(\varphi_1- e^{2i\alpha}\psi_1),\\
    \RE\bA_2&=\dA^*_2+\frac{i}{2}(\cdot,\varphi_2+e^{2i\beta}\psi_2)(\varphi_2- e^{2i\beta}\psi_2),
    \end{aligned}
\end{equation*}
and
\begin{equation*}\label{e-18-im-atr}
    \begin{aligned}
    \IM\bA_1&=\frac{1}{2}(\cdot,\varphi_1-e^{2i\alpha}\psi_1)(\varphi_1- e^{2i\alpha}\psi_1),\\
    \IM\bA_2&=\frac{1}{2}(\cdot,\varphi_2-e^{2i\beta}\psi_2)(\varphi_2- e^{2i\beta}\psi_2),
    \end{aligned}
\end{equation*}
where $\varphi_k$ and $\psi_k$, ($k=1,2$) are $(-)$-normalized elements in $\calR^{-1}(\Ker (\dot A^*_k- iI))$ and  $\calR^{-1}(\Ker (\dot A^*_k+ iI))$, ($k=1,2$), respectively.
It is not hard to show that $U_-(\varphi_1- e^{2i\alpha}\psi_1)=\varphi_2- e^{2i\beta}\psi_2$. Indeed, for any $f\in\calH_{+2}$ we have
$$
    \begin{aligned}
(f,\varphi_2- e^{2i\beta}\psi_2)&=(f,g_{+,2}- e^{2i\beta}g_{-,2})_{+2}=(f,U_+(g_{+,1}- e^{2i\alpha}g_{-,1}))_{+2}\\
&=(U_+U_+^{-1}f,U_+(g_{+,1}- e^{2i\alpha}g_{-,1}))_{+2}\\
&=(U_+^{-1}f,g_{+,1}- e^{2i\alpha}g_{-,1})_{+1}\\
&=(U_+^{-1}f,\varphi_1- e^{2i\alpha}\psi_1)=(f,(U_+^*)^{-1}(\varphi_1- e^{2i\alpha}\psi_1))\\
&=(f,U_-(\varphi_1- e^{2i\alpha}\psi_1)).
    \end{aligned}
$$
Combining this with the above formulas for $\IM\bA_1$ and $\IM\bA_2$ we obtain that $\IM\bA_2=U_-\IM\bA_1U_+^{-1}$. Taking into account that \eqref{e-93-un-eq} implies $$\dA_2^*=U\dA_1^*U^{-1}=U_-\dA_1^*U_+^{-1}$$ and using similar steps we also get $\RE\bA_2=U_-\RE\bA_1U_+^{-1}$. Thus, $\bA_2=U_-\bA_1U_+^{-1}$.
\end{proof}

Note  that the  L-system $\Xi$ in \eqref{e-82-attract} is minimal since the symmetric operator $\dot\cB$ is prime. Moreover, it follows from Definition \ref{d17} and Lemma \ref{l-22} that a system attractor $\Xi$ is not unique but all system attractors are bi-unitary equivalent to each other.  The L-system $\Xi$ in \eqref{e-82-attract} will be called the  \textbf{position system attractor}.

Now we are going to  construct another model for a system attractor. In the space $\calH=L^2_{\dR}=L^2_{(-\infty,0]}\oplus L^2_{[0,\infty)}$ we consider a prime symmetric operator
\begin{equation}\label{e-87-sym}
\dA x=i\frac{dx}{dt}
\end{equation}
on
$$
\begin{aligned}
\dom(\dA)&=\left\{x(t)=\left(
                       \begin{array}{c}
                         x_1(t) \\
                         x_2(t) \\
                       \end{array}
                     \right)\,\Big|\,x(t) -\text{abs. cont.},\right.\\
                     &\left. x'(t)\in L^2_{\dR},\, x(0-)=x(0+)=0\right\}.\\
\end{aligned}
$$
Its  deficiency vectors  are easy to find
\begin{equation}\label{e-87-def}
g_z=\left(
    \begin{array}{c}
      e^{-izt}  \\
                0 \\
      \end{array}
     \right),\; \IM z>0,\qquad
g_z=\left(
\begin{array}{c}
               0 \\
      e^{-izt} \\
       \end{array}
     \right),\; \IM z<0.
\end{equation}
In particular, for $z=\pm i$ the   (normalized) deficiency vectors  are
\begin{equation}\label{e-88-def}
g_+=\left(
    \begin{array}{c}
     \sqrt2\, e^t  \\
                0 \\
      \end{array}
     \right)
\in \sN_i,\qquad
g_-=\left(
\begin{array}{c}
               0 \\
     \sqrt2\, e^{-t} \\
       \end{array}
     \right)\in \sN_{-i}.
\end{equation}
Consider also,
\begin{equation}\label{e-89-ext}
    \begin{aligned}
A x&=i\frac{dx}{dt},\\
\dom(A)&=\left\{x(t)=\left(
                       \begin{array}{c}
                         x_1(t) \\
                         x_2(t) \\
                       \end{array}
                     \right)
\,\Big|\,x_1(t),\,x_2(t) -\text{abs. cont.},\right.\\
&\left. x'_1(t)\in L^2_{(-\infty,0]},\, x'_2(t)\in L^2_{[0,\infty)},\,x_1(0-)=-x_2(0+)\right\}.\\
    \end{aligned}
\end{equation}
Clearly, $g_+-g_-\in\dom(A)$ and hence $A$ is  a self-adjoint extension of $\dA$ satisfying the conditions of Hypothesis \ref{setup}. Furthermore,
\begin{equation}\label{e-90-T}
    \begin{aligned}
T x&=i\frac{dx}{dt},\\
\dom(T)&=\left\{x(t)=\left(
                       \begin{array}{c}
                         x_1(t) \\
                         x_2(t) \\
                       \end{array}
                     \right)
\,\Big|\,x_1(t),\,x_2(t) -\text{abs. cont.},\right.\\
&\left. x'_1(t)\in L^2_{(-\infty,0]},\, x'_2(t)\in L^2_{[0,\infty)},\,x_2(0+)=0\right\}.\\
    \end{aligned}
\end{equation}
By construction, $T$ is a quasi-self-adjoint extension of $\dA$ parameterized by a von Neumann parameter $\kappa=0$ that satisfies the conditions of Hypothesis \ref{setup}. Using direct check we obtain
\begin{equation}\label{e-91-T-star}
    \begin{aligned}
T^* x&=i\frac{dx}{dt},\\
\dom(T^*)&=\left\{x(t)=\left(
                       \begin{array}{c}
                         x_1(t) \\
                         x_2(t) \\
                       \end{array}
                     \right)
\,\Big|\,x_1(t),\,x_2(t) -\text{abs. cont.},\right.\\
&\left. x'_1(t)\in L^2_{(-\infty,0]},\, x'_2(t)\in L^2_{[0,\infty)},\,x_1(0-)=0\right\}.\\
    \end{aligned}
\end{equation}
Similarly one finds
\begin{equation}\label{e-92-adj}
    \begin{aligned}
\dot A^* x&=i\frac{dx}{dt},\\
\dom(\dot A^*)&=\left\{x(t)=\left(
                       \begin{array}{c}
                         x_1(t) \\
                         x_2(t) \\
                       \end{array}
                     \right)
\,\Big|\,x_1(t),\,x_2(t) -\text{abs. cont.},\right.\\
&\left. x'_1(t)\in L^2_{(-\infty,0]},\, x'_2(t)\in L^2_{[0,\infty)}\right\}.\\
    \end{aligned}
\end{equation}
Then $\calH_+=\dom(\dA^\ast)=W^1_2(-\infty,0]\oplus W^1_2[0,\infty)$, where $W^1_2$ is a Sobolev space.  Construct a rigged Hilbert space
$$\begin{aligned}
&\calH_+ \subset \calH \subset\calH_-\\
&=W^1_2(-\infty,0]\oplus W^1_2[0,\infty)\subset L^2_{(-\infty,0]}\oplus L^2_{[0,\infty)}\subset (W^1_2(-\infty,0]\oplus W^1_2[0,\infty))_-
\end{aligned}
$$
and consider operators
\begin{equation}\label{e-93-bA}
\begin{aligned}
\bA x&=i\frac{dx}{dt}+i x(0+)\left[\delta(t+)-\delta(t-)\right],\\
\bA^\ast x&=i\frac{dx}{dt}+i x(0-)\left[\delta(t+)-\delta(t-)\right],
\end{aligned}
\end{equation}
where $x(t)\in W^1_2(-\infty,0]\oplus W^1_2[0,\infty)$, $\delta(t+)$, $\delta(t-)$ are delta-functions and elements of $(W^1_2(-\infty,0]\oplus W^1_2[0,\infty))_-$ such that
$$
\delta(t+)=\left(
                       \begin{array}{c}
                         0 \\
                         \delta_2(t+) \\
                       \end{array}
                     \right), \qquad \delta(t-)=\left(
                       \begin{array}{c}
                        \delta_1(t-)  \\
                          0
                       \end{array}
                     \right),
$$
and generate functionals by the formulas
$$(x,\delta(t+))=(x_1,0)+(x_2,\delta_2(t+))=x_2(0+),$$
and
$$
(x,\delta(t-))=(x_1,\delta_1(t-))+(x_2,0)=x_1(0-).
$$
It is easy to see
that
$\bA\supset T\supset \dA$, $\bA^\ast\supset T^\ast\supset \dA,$
and
$$
\RE\bA x=i\frac{dx}{dt}+\frac{i }{2}(x(0+)+x(0-))\left[\delta(t+)-\delta(t-)\right].
$$
Clearly, $\RE\bA$ has its quasi-kernel equal to $A$ in \eqref{e-89-ext}. Moreover,
$$
\IM\bA x=\left(\cdot,\frac{1}{\sqrt 2}[\delta(t+)-\delta(t-)]\right) \frac{1}{\sqrt 2}[\delta(t+)-\delta(t-)]=(\cdot,\chi)\chi,
$$
where $\chi=\frac{1}{\sqrt 2}[\delta(t+)-\delta(t-)]$.
Now we can build
\begin{equation}\label{e6-125-mom}
\Xi= 
\begin{pmatrix}
\bA &K &1\\
&&\\
\calH_+ \subset \calH \subset\calH_- &{ } &\dC
\end{pmatrix},
\end{equation}
that is an L-system with
\begin{equation}\label{e7-62-new}
\begin{aligned}
Kc&=c\cdot \chi=c\cdot \frac{1}{\sqrt 2}[\delta(t+)-\delta(t-)], \quad (c\in \dC),\\
K^\ast x&=(x,\chi)=\left(x,  \frac{1}{\sqrt
2}[\delta(t+)-\delta(t-)]\right)=\frac{1}{\sqrt
2}[x(0+)-x(0-)],\\
\end{aligned}
\end{equation}
and $x(t)\in \calH_+= W^1_2(-\infty,0]\oplus W^1_2[0,\infty)$.

Taking into account \eqref{charsum}, \eqref{e-87-def}, and \eqref{e-87-def} we have that $s(\dA,A)(z)\equiv0$ for all $z$ in $\dC_+$. Consequently, \eqref{blog} yields $M(\dA,A)(z)\equiv i$ for all $z$ in $\dC_+$.
Therefore, $V_\Xi(z)=i$ for all $z\in\dC_+$ which implies that $\Xi$ in \eqref{e6-125-mom} is another model for a system attractor that will be called the  \textbf{momentum system attractor}.


\section{Examples}

\subsection*{Example 1}\label{ex-4}

This example is to illustrate the coupling of two L-systems and Theorem \ref{t6-7-3}.
First, we recall the construction of an L-systems from \cite[Example 1]{BMkT}.
Consider the prime symmetric operator
\begin{equation}\label{e-56}
\dA x=i\frac{dx}{dt},
\end{equation}
on
$$
\dom(\dA)=\left\{x(t)\,\Big|\,x(t) -\text{abs. cont.}, x'(t)\in L^2_{[0,\ell]},\, x(0)=x(\ell)=0\right\}.\\
$$
Its   (normalized) deficiency vectors  are (see \cite{AG93}, \cite{BMkT})
\begin{equation}\label{e-57}
g_+=\frac{\sqrt2}{\sqrt{e^{2\ell}-1}}e^t\in \sN_i,\qquad g_-=\frac{\sqrt2}{\sqrt{1-e^{-2\ell}}}e^{-t}\in \sN_{-i}.
\end{equation}
Also,
\begin{equation}\label{e-58'}
A x=i\frac{dx}{dt}
\end{equation}
on
$$
\dom(A)=\left\{x(t)\,\Big|\,x(t) -\text{abs. cont.}, x'(t)\in L^2_{[0,\ell]},\, x(0)=-x(\ell)\right\}.
$$
is  a self-adjoint extension of $\dA$ satisfying the conditions of Hypothesis \ref{setup}. The Liv\v{s}ic characteristic function $s(z)$ for the pair $(\dA,A)$ is defined and equal 
\begin{equation}\label{e1-1}
s(z)=\frac{e^\ell-e^{-i\ell z}}{1-e^\ell e^{-i\ell z}}.
\end{equation}
Consider the operator
\begin{equation}\label{e1-3}
    \begin{aligned}
T x&=i\frac{dx}{dt}
    \end{aligned}
\end{equation}
on
$$
\dom(T)=\left\{x(t)\,\Big|\,x(t)-\text{abs. cont.}, x'(t)\in L^2_{[0,\ell]},\,  x(0)=0\right\}.
$$
By construction, $T$ is a quasi-self-adjoint extension of $\dA$ parameterized by a von Neumann parameter $\kappa=e^{-\ell}$.
The triple of operators $( \dot A, T, A)$ satisfies the conditions of Hypothesis \ref{setup} since $|\kappa|=e^{-\ell}<1$.
The characteristic function $S(z)$ for the triple $(\dA, T, A)$
\begin{equation}\label{e-64}
S(z)=e^{i\ell z}.
\end{equation}
By the direct check one gets
\begin{equation}\label{e1-3star}
T^* x=i\frac{dx}{dt}
\end{equation}
on
$$
\dom(T)=\left\{x(t)\,\Big|\,x(t)-\text{abs. cont.}, x'(t)\in L^2_{[0,\ell]},\,  x(\ell)=0\right\}.
$$
It was shown in \cite[Example 1]{BMkT} that
\begin{equation}\label{e7-83}
\dA^\ast x=i\frac{dx}{dt}
\end{equation}
on
$$
\dom(\dA^\ast)=\left\{x(t)\,\Big|\,x(t) \text{- abs. continuous}, x'(t)\in L^2_{[0,\ell]}\right\}.\\
$$
Then $\calH_+=\dom(\dA^\ast)=W^1_2$ is a Sobolev space with scalar
product
\begin{equation}\label{e-product}
(x,y)_+=\int_0^\ell x(t)\overline{y(t)}\,dt+\int_0^\ell
x'(t)\overline{y'(t)}\,dt.
\end{equation}
 Construct rigged Hilbert space
$W^1_2\subset L^2_{[0,\ell]}\subset (W_2^1)_-$
and consider operators
\begin{equation}\label{e7-84}
\begin{aligned}
\bA x&=i\frac{dx}{dt}+i x(0)\left[\delta(t)-\delta(t-\ell)\right],\\
\bA^\ast x&=i\frac{dx}{dt}+i x(l)\left[\delta(t)-\delta(t-\ell)\right],
\end{aligned}
\end{equation}
where $x(t)\in W_2^1$, $\delta(t)$, $\delta(t-\ell)$ are delta-functions and elements of 
$(W^1_2)_-$ that generate functionals by the formulas
$(x,\delta(t))=x(0)$ and $(x,\delta(t-\ell))=x(\ell)$. It is easy to see
that
$\bA\supset T\supset \dA$, $\bA^\ast\supset T^\ast\supset \dA,$
and
$$
\RE\bA x=i\frac{dx}{dt}+\frac{i }{2}(x(0)+x(\ell))\left[\delta(t)-\delta(t-\ell)\right].
$$
Clearly, $\RE\bA$ has its quasi-kernel equal to $A$ in \eqref{e-58'}. Moreover,
$$
\IM\bA x=\left(\cdot,\frac{1}{\sqrt 2}[\delta(t)-\delta(t-\ell)]\right) \frac{1}{\sqrt 2}[\delta(t)-\delta(t-\ell)]=(\cdot,\chi)\chi,
$$
where $\chi=\frac{1}{\sqrt 2}[\delta(t)-\delta(t-\ell)]$.
Now we can build
\begin{equation*}
\Theta= 
\begin{pmatrix}
\bA &K &1\\
&&\\
W_2^1\subset L^2_{[0,\ell]}\subset (W^1_2)_- &{ } &\dC
\end{pmatrix},
\end{equation*}
that is an L-system with
\begin{equation}\label{e7-62}
\begin{aligned}
Kc&=c\cdot \chi=c\cdot \frac{1}{\sqrt 2}[\delta(t)-\delta(t-l)], \quad (c\in \dC),\\
K^\ast x&=(x,\chi)=\left(x,  \frac{1}{\sqrt
2}[\delta(t)-\delta(t-l)]\right)=\frac{1}{\sqrt
2}[x(0)-x(l)],\\
\end{aligned}
\end{equation}
and $x(t)\in W^1_2$.
Now consider two L-systems of the type constructed above. These are
\begin{equation*}
\Theta_1=
\begin{pmatrix}
\bA_1 &K_1 &1\\
&&\\
W_2^1([0,\gamma])\subset L^2_{[0,\gamma]}\subset (W^1_2)_-([0,\gamma]) &{ } &\dC
\end{pmatrix},
\end{equation*}
and
\begin{equation*}
\Theta_2=
\begin{pmatrix}
\bA_2 &K_2 &1\\
&&\\
W_2^1([\gamma,\ell])\subset L^2_{[\gamma,\ell]}\subset (W^1_2)_-([\gamma,\ell]) &{ } &\dC
\end{pmatrix}.
\end{equation*}
Before we give the description of the operators $\bA_k$ and $K_k$, ($k=1,2$), we note that $\gamma\in(0,\ell)$ and $[0,\ell]=[0,\gamma]\cup[\gamma,\ell]$. The main
operators of $T_1$ and $T_2$ of the L-systems $\Theta_1$ and $\Theta_2$ are defined as follows
\begin{equation}\label{e-T1}
T_1 x=i\frac{dx}{dt},
\end{equation}
on
$$
\dom(T_1)=\left\{x(t)\,\Big|\,x(t)-\text{abs. cont.}, x'(t)\in L^2_{[0,\gamma]},\,  x(0)=0\right\},
$$
and
\begin{equation}\label{e-T2}
T_2 x=i\frac{dx}{dt},
\end{equation}
on
$$
\dom(T_2)=\left\{x(t)\,\Big|\,x(t)-\text{abs. cont.}, x'(t)\in L^2_{[\gamma,\ell]},\,  x(\gamma)=0\right\}.
$$
Their maximal symmetric parts are respectively
\begin{equation}\label{e-dA1dA2}
\dA_1 x=i\frac{dx}{dt}
\end{equation}
on
$$
\dom(\dA_1)=\left\{x(t)\,\Big|\,x(t) -\text{abs. cont.}, x'(t)\in L^2_{[0,\gamma]},\, x(0)=x(\gamma)=0\right\},\\
$$
and
$$
\dA_2 x=i\frac{dx}{dt}
$$
on
$$\dom(\dA_2)=\left\{x(t)\,\Big|\,x(t) -\text{abs. cont.}, x'(t)\in L^2_{[\gamma,\ell]},\, x(\gamma)=x(\ell)=0\right\}.
$$
As it was shown in \cite[Example 1]{BMkT}, the von Neumann parameters of $T_1$ and $T_2$ are $\kappa_1=e^{-\gamma}$ and $\kappa_2=e^{-\ell+\gamma}$, respectively.
Moreover,
    \begin{align}
\bA_1 x&=i\frac{dx}{dt}+i x(0)\left[\delta(t)-\delta(t-\gamma)\right],\label{e-bA1bA2}
\\
\bA^\ast_1 x&=i\frac{dx}{dt}+i x(\gamma)\left[\delta(t)-\delta(t-\gamma)\right], \nonumber\\
\bA_2 x&=i\frac{dx}{dt}+i x(\gamma)\left[\delta(t-\gamma)-\delta(t-\ell)\right],\nonumber\\
\bA^\ast_2 x&=i\frac{dx}{dt}+i x(\ell)\left[\delta(t-\gamma)-\delta(t-\ell)\right],\nonumber
    \end{align}

The quasi-kernels of $\RE\bA_1$ and $\RE\bA_2$ are given by
\begin{equation}\label{e-hat}
\hat A_1 =i\frac{dx}{dt}
\end{equation}
on
$$
\dom(\hat A_1)=\left\{x(t)\,\Big|\,x(t) -\text{abs. cont.}, x'(t)\in L^2_{[0,\gamma]},\, x(0)=-x(\gamma)\right\}.\\
$$
and
$$
\hat A_2 x=i\frac{dx}{dt},\;
$$
on
$$
\dom(\hat A_2)=\left\{x(t)\,\Big|\,x(t) -\text{abs. cont.}, x'(t)\in L^2_{[\gamma,\ell]},\, x(\gamma)=-x(\ell)\right\},
$$
respectively. Operators $K_1$ and $K_2$ are found via the formulas
\begin{equation}\label{e-K1K2}
\begin{aligned}
K_1c&=c\cdot \chi_1=c\cdot \frac{1}{\sqrt 2}[\delta(t)-\delta(t-\gamma)], \quad (c\in \dC),\\
K_2c&=c\cdot \chi_2=c\cdot \frac{1}{\sqrt 2}[\delta(t-\gamma)-\delta(t-\ell)], \quad (c\in \dC).
\end{aligned}
\end{equation}

It was shown in \cite[Example 4.4]{MT10} that the coupling of operators $T_1$ and $T_2$ is defined as follows
$$T=T_1\uplus T_2,$$
where
\begin{equation}\label{e-T}
  T x=i\frac{dx}{dt},
  \end{equation}
on
$$
\dom(T)=\left\{x(t)\,\Big|\,x(t)-\text{abs. cont.}, x'(t)\in L^2_{[0,\ell]},\,  x(0)=0\right\}.
$$
As one can see from \cite[Example 1]{BMkT}, the von Neumann parameter $\kappa$ of the operator $T$ in \eqref{e-T} is
$$
\kappa=e^{-\ell}=e^{-\gamma}\cdot e^{-\ell+\gamma}=\kappa_1\cdot\kappa_2.
$$
 We will use this $T$ to construct the coupling of our L-systems $\Theta_1$ and $\Theta_2$. Following \cite[Example 1]{BMkT}, we notice that the operator $\dA$ defined by \eqref{e-56} is the maximal symmetric part of $T$ and $T^*$ and the self-adjoint extension $A$ of $\dA$ defined by \eqref{e-58'} can serve as the quasi-kernel of $\RE\bA$, where $\bA$ is given by \eqref{e7-84}. Thus we have
\begin{equation}\label{e6-125}
\Theta= \Theta_1\cdot\Theta_2=
\begin{pmatrix}
\bA &K &1\\
&&\\
W_2^1\subset L^2_{[0,\ell]}\subset (W^1_2)_- &{ } &\dC
\end{pmatrix},
\end{equation}
where operator $K$ is given via \eqref{e7-62} and $T$ and $\bA$ are described above. Following the procedure of \cite[Example 1]{BMkT} or, alternatively, using calculations of $S(\dA_1,T_1,A_1)$ and $S(\dA_2,T_2,A_2)$ and formula \eqref{e-60} one finds that
\begin{equation}\label{e-84-new}
W_{\Theta_1}(z)=e^{-i\gamma z}\quad\textrm{ and }\quad W_{\Theta_2}(z)=e^{-i(\ell-\gamma) z}.
\end{equation}

Trivially then
$$
W_{\Theta}(z)=e^{-i\ell z}=e^{-i\gamma z}\cdot e^{-i(\ell-\gamma) z}=W_{\Theta_1}(z)\cdot W_{\Theta_2}(z),
$$
which confirms the conclusion of Theorem \ref{t6-7-3}. On a side note, \eqref{e6-3-6} implies that the corresponding impedance functions of L-systems $\Theta_1$, $\Theta_2$, and $\Theta$ are
$$
V_{\Theta_1}(z)=i\frac{e^{-i\gamma z}-1}{e^{-i\gamma z}+1}, \quad V_{\Theta_2}(z)=i\frac{e^{-i(\ell-\gamma) z}-1}{e^{-i(\ell-\gamma) z}+1},\quad V_{\Theta}(z)=i\frac{e^{-i\ell z}-1}{e^{-i\ell z}+1},
$$
respectively. Consequently,
$$
V_{\Theta_1}(i)=i\frac{1-e^{-\gamma}}{1+e^{-\gamma }}, \quad V_{\Theta_2}(i)=i\frac{1-e^{-\ell+\gamma }}{1+e^{-\ell+\gamma }},\quad V_{\Theta}(i)=i\frac{1-e^{-\ell}}{1+e^{-\ell }},
$$
confirming that $V_{\Theta_1}(z)\in\sM_{\kappa_1}$, $V_{\Theta_2}(z)\in\sM_{\kappa_2}$, and $V_{\Theta}(z)\in\sM_{\kappa}$ where $\kappa_1=e^{-\gamma}$, $\kappa_2=e^{-\ell+\gamma}$,
and $\kappa=\kappa_1\cdot\kappa_2=e^{-\ell}$. This illustrates Theorem \ref{t28}.

\subsection*{Example 2}\label{ex-5}

Here we will rely on Example 1  to illustrate the coupling of two L-systems that obey Hypothesis \ref{setup-1}.
Let $\dA$ be the symmetric operator defined by \eqref{e-56} but choose
\begin{equation}\label{e-58''}
    A x=i\frac{dx}{dt}
\end{equation}
on
$$
\dom(A)=\left\{x(t)\,\Big|\,x(t) -\text{abs. cont.}, x'(t)\in L^2_{[0,\ell]},\, x(0)=x(\ell)\right\}.
$$

that is  another self-adjoint extension of $\dA$ (with periodic conditions) satisfying the conditions of Hypothesis \ref{setup-1}.
Take the operator $T$ to be  still defined via \eqref{e1-3} and having a von Neumann parameter $\kappa=e^{-\ell}$.
The triple of operators $(\dA, T, A)$ then satisfies the conditions of Hypothesis \ref{setup-1}.
According to \eqref{e-49-1} and \eqref{e-64}, the characteristic function $S(z)$ for the triple $(\dA, T, A)$ is given by
\begin{equation}\label{e-64'}
S(z)=-e^{i\ell z}.
\end{equation}
Repeating the steps of Example 1 we construct the rigged Hilbert space
$W^1_2\subset L^2_{[0,\ell]}\subset (W_2^1)_-$
and consider operators
\begin{equation}\label{e7-84'}
\begin{aligned}
\bA x&=i\frac{dx}{dt}+i x(0)\left[\delta(t)+\delta(t-\ell)\right],\\
\bA^\ast x&=i\frac{dx}{dt}-i x(l)\left[\delta(t)+\delta(t-\ell)\right],
\end{aligned}
\end{equation}
where $x(t)\in W_2^1$, $\delta(t)$, $\delta(t-\ell)$ are delta-functions and elements of 
$(W^1_2)_-$ that generate functionals by the formulas
$(x,\delta(t))=x(0)$ and $(x,\delta(t-\ell))=x(\ell)$. It is easy to see
that
$\bA\supset T\supset \dA$, $\bA^\ast\supset T^\ast\supset \dA,$
and
$$
\RE\bA x=i\frac{dx}{dt}+\frac{i }{2}(x(0)-x(\ell))\left[\delta(t)+\delta(t-\ell)\right].
$$
Clearly, $\RE\bA$ has its quasi-kernel equal to $A$ in \eqref{e-58''}. Moreover,
$$
\IM\bA x=\left(\cdot,\frac{1}{\sqrt 2}[\delta(t)+\delta(t-\ell)]\right) \frac{1}{\sqrt 2}[\delta(t)+\delta(t-\ell)]=(\cdot,\chi)\chi,
$$
where $\chi=\frac{1}{\sqrt 2}[\delta(t)+\delta(t-\ell)]$.
Now we can build
\begin{equation*}
\Theta= 
\begin{pmatrix}
\bA &K &1\\
&&\\
W_2^1\subset L^2_{[0,\ell]}\subset (W^1_2)_- &{ } &\dC
\end{pmatrix},
\end{equation*}
that is an  L-system with
\begin{equation}\label{e7-62'}
\begin{aligned}
Kc&=c\cdot \chi=c\cdot \frac{1}{\sqrt 2}[\delta(t)+\delta(t-l)], \quad (c\in \dC),\\
K^\ast x&=(x,\chi)=\left(x,  \frac{1}{\sqrt
2}[\delta(t)+\delta(t-l)]\right)=\frac{1}{\sqrt
2}[x(0)+x(l)],\\
\end{aligned}
\end{equation}
and $x(t)\in W^1_2$.

Now we consider two such L-systems of the type \eqref{e7-62'} that satisfy Hypothesis \ref{setup-1}. These are
\begin{equation*}
\Theta_1=
\begin{pmatrix}
\bA_1 &K_1 &1\\
&&\\
W_2^1([0,\gamma])\subset L^2_{[0,\gamma]}\subset (W^1_2)_-([0,\gamma]) &{ } &\dC
\end{pmatrix},
\end{equation*}
and
\begin{equation*}
\Theta_2=
\begin{pmatrix}
\bA_2 &K_2 &1\\
&&\\
W_2^1([\gamma,\ell])\subset L^2_{[\gamma,\ell]}\subset (W^1_2)_-([\gamma,\ell]) &{ } &\dC
\end{pmatrix}.
\end{equation*}
As in Example 1 the main operators of $T_1$ and $T_2$ of the L-systems $\Theta_1$ and $\Theta_2$ are defined by \eqref{e-T1}-\eqref{e-T2}
and their maximal symmetric parts are given respectively by \eqref{e-dA1dA2}.
Moreover,

    \begin{align}
\bA_1 x&=i\frac{dx}{dt}+i x(0)\left[\delta(t)+\delta(t-\gamma)\right],\label{e-bA1bA2'}\\
\bA^\ast_1 x&=i\frac{dx}{dt}-i x(\gamma)\left[\delta(t)+\delta(t-\gamma)\right],\nonumber\\
\bA_2 x&=i\frac{dx}{dt}+i x(\gamma)\left[\delta(t-\gamma)+\delta(t-\ell)\right],\nonumber\\
\bA^\ast_2 x&=i\frac{dx}{dt}-i x(\ell)\left[\delta(t-\gamma)+\delta(t-\ell)\right].\nonumber
    \end{align}

The quasi-kernels of $\RE\bA_1$ and $\RE\bA_2$ are given by
\begin{equation}\label{e-hat'}
   \hat A_1 x=i\frac{dx}{dt}
   \end{equation}
   on
   $$
\dom(\hat A_1)=\left\{x(t)\,\Big|\,x(t) -\text{abs. cont.}, x'(t)\in L^2_{[0,\gamma]},\, x(0)=x(\gamma)\right\},
$$
and
$$
\hat A_2 =i\frac{dx}{dt}
$$
on
$$
\dom(\hat A_2)=\left\{x(t)\,\Big|\,x(t) -\text{abs. cont.}, x'(t)\in L^2_{[\gamma,\ell]},\, x(\gamma)=x(\ell)\right\},
$$
respectively. Operators $K_1$ and $K_2$ are found via the formulas
\begin{equation}\label{e-K1K2'}
\begin{aligned}
K_1c&=c\cdot \chi_1=c\cdot \frac{1}{\sqrt 2}[\delta(t)+\delta(t-\gamma)], \quad (c\in \dC),\\
K_2c&=c\cdot \chi_2=c\cdot \frac{1}{\sqrt 2}[\delta(t-\gamma)+\delta(t-\ell)], \quad (c\in \dC).
\end{aligned}
\end{equation}
Following the procedure of \cite[Example 1]{BMkT} or, alternatively, using calculations of $S(\dA_1,T_1,A_1)$ and $S(\dA_2,T_2,A_2)$ and formulas \eqref{e-49-1}, \eqref{e-60} one finds that
\begin{equation}\label{e-84-new'}
W_{\Theta_1}(z)=-e^{-i\gamma z} \quad\textrm{ and }\quad  W_{\Theta_2}(z)=-e^{-i(\ell-\gamma) z}.
\end{equation}

As it was shown in Example 1  the coupling $T=T_1\uplus T_2$ of operators $T_1$ and $T_2$ is defined by \eqref{e-T}. We are going to  use this $T$ to construct the coupling of our L-systems $\Theta_1$ and $\Theta_2$. Formation of the coupling of two L-systems that do not obey the conditions of Hypothesis \ref{setup}
is explained in Remark 7. Let us, however, note first that
$$
W_{\Theta_1}(z)\cdot W_{\Theta_2}(z)=(-e^{-i\gamma z})\cdot (-e^{-i(\ell-\gamma) z})=e^{-i\ell z}=W_{\Theta}(z),
$$
where $\Theta$ is the L-system given by \eqref{e6-125} in Example 1. Then applying the coupling construction of Remark \ref{r-7}  we conclude that the L-system $\Theta$ of the form \eqref{e6-125} satisfying the conditions of Hypothesis \ref{setup} is indeed the coupling of two L-systems $\Theta_1$ and $\Theta_2$ that obey  Hypothesis \ref{setup-1}. Among other things, this Example confirms that the conditions of Hypothesis \ref{setup-1} are not invariant under the operation of coupling.

Similarly to Example 1 we note that \eqref{e6-3-6} implies that the corresponding impedance functions of L-systems $\Theta_1$, $\Theta_2$, and $\Theta$ are
$$
V_{\Theta_1}(z)=i\frac{-e^{-i\gamma z}-1}{-e^{-i\gamma z}+1}, \quad V_{\Theta_2}(z)=i\frac{-e^{-i(\ell-\gamma) z}-1}{-e^{-i(\ell-\gamma) z}+1},\quad V_{\Theta}(z)=i\frac{e^{-i\ell z}-1}{e^{-i\ell z}+1},
$$
respectively. Consequently,
$$
V_{\Theta_1}(i)=i\frac{1+e^{-\gamma}}{1-e^{-\gamma }}, \quad V_{\Theta_2}(i)=i\frac{1+e^{-\ell+\gamma }}{1-e^{-\ell+\gamma }},\quad V_{\Theta}(i)=i\frac{1-e^{-\ell}}{1+e^{-\ell }},
$$
confirming that $V_{\Theta_1}(z)\in\sM_{\kappa_1}^{-1}$, $V_{\Theta_2}(z)\in\sM_{\kappa_2}^{-1}$, and $V_{\Theta}(z)\in\sM_{\kappa}$ where $\kappa_1=e^{-\gamma}$, $\kappa_2=e^{-\ell+\gamma}$, and $\kappa=\kappa_1\cdot\kappa_2=e^{-\ell}$. This illustrates Corollary \ref{c-19}.


\subsection*{Example 3}\label{ex-3}

This example is to illustrate the coupling of two L-systems and Corollary \ref{c-29}. Consider two L-systems similar to the ones constructed in  Example 1. These are
\begin{equation*}
\Theta_1=
\begin{pmatrix}
\bA_1 &K_1 &1\\
&&\\
W_2^1([0,\gamma])\subset L^2_{[0,\gamma]}\subset (W^1_2)_-([0,\gamma]) &{ } &\dC
\end{pmatrix},
\end{equation*}
and
\begin{equation*}
\Theta_2=
\begin{pmatrix}
\bA_2 &K_2 &1\\
&&\\
W_2^1([\gamma,\ell])\subset L^2_{[\gamma,\ell]}\subset (W^1_2)_-([\gamma,\ell]) &{ } &\dC
\end{pmatrix}.
\end{equation*}
Once again, we note that $\gamma\in(0,\ell)$ and $[0,\ell]=[0,\gamma]\cup[\gamma,\ell]$. The main
operators of $T_1$ and $T_2$ of the L-systems $\Theta_1$ and $\Theta_2$ are defined as follows
\begin{equation}\label{e-T1'}
    T_1 x=i\frac{dx}{dt},
 \end{equation}
 on
 $$
\dom(T_1)=\left\{x(t)\,\Big|\,x(t)-\text{abs. cont.}, x'(t)\in L^2_{[0,\gamma]},\,  x(\gamma)=e^{\gamma}x(0)\right\},
$$
and
\begin{equation}\label{e-T2'}
T_2 x=i\frac{dx}{dt},
\end{equation}
on
$$
\dom(T_2)=\left\{x(t)\,\Big|\,x(t)-\text{abs. cont.}, x'(t)\in L^2_{[\gamma,\ell]},\,  x(\gamma)=0\right\}.
 $$
Their maximal symmetric parts are given by \eqref{e-dA1dA2}.

As we have shown in \cite[Example 1 and 2]{BMkT}, the von Neumann parameters of $T_1$ and $T_2$ are $\kappa_1=0$ and $\kappa_2=e^{-\ell+\gamma}$, respectively.
Moreover,
\begin{equation}\label{e-bA1bA2''}
    \begin{aligned}
\bA_1 x&=i\frac{dx}{dt}+i \frac{x(\gamma)-e^\gamma x(0)}{e^\gamma-1} \left[\delta(t-\gamma)-\delta(t)\right],\\
\bA^\ast_1 x&=i\frac{dx}{dt}+i \frac{x(0)-e^{\gamma} x(\gamma)}{e^\gamma-1} \left[\delta(t-\gamma)-\delta(t)\right],\\
\bA_2 x&=i\frac{dx}{dt}+i x(\gamma)\left[\delta(t-\gamma)-\delta(t-\ell)\right],\\
\bA^\ast_2 x&=i\frac{dx}{dt}+i x(\ell)\left[\delta(t-\gamma)-\delta(t-\ell)\right].
    \end{aligned}
\end{equation}
The quasi-kernels of $\RE\bA_1$ and $\RE\bA_2$ are given by \eqref{e-hat}.
 Operators $K_1$ and $K_2$ are found via the formulas
\begin{equation}\label{e-K1K2''}
\begin{aligned}
K_1c&=c\cdot \chi_1=c\cdot \sqrt{\frac{e^\ell+1}{2(e^\ell-1)}}\,[\delta(t)-\delta(t-\gamma)], \quad (c\in \dC),\\
K_2c&=c\cdot \chi_2=c\cdot \frac{1}{\sqrt 2}[\delta(t-\gamma)-\delta(t-\ell)], \quad (c\in \dC).
\end{aligned}
\end{equation}
First, we are going to follow the procedure developed in \cite{MT10} to construct  the coupling of operators $T_1$ and $T_2$, that is $T=T_1\uplus T_2$. This procedure begins with construction of the symmetric operator $\dA=T|_{\dom(T)\cap\dom( T^*)}$. Following the steps we obtain
\begin{equation}\label{e-108-A}
    \dA f=\left(
            \begin{array}{cc}
              i\frac{d}{dt} & 0 \\
              0 & i\frac{d}{dt} \\
            \end{array}
          \right)\left[
                   \begin{array}{c}
                     f_1 \\
                     f_2 \\
                   \end{array}
                 \right],\quad f=\left[
                   \begin{array}{c}
                     f_1 \\
                     f_2 \\
                   \end{array}
                 \right]\in\dom(\dA),
\end{equation}
where
\begin{equation}\label{e-109-dA}
\begin{aligned}
    \dom(\dA)=&\left\{ f=\left[\begin{array}{c}
                     f_1 \\
                     f_2 \\
                   \end{array}\right],\; f_1(\gamma)=e^\gamma f_1(0),\;f_2(\ell)=0,\right.\\
                  &\left. f_2(\gamma)=-e^{-\gamma}\sqrt{e^{2\gamma}-1} f_1(\gamma)   \right\}.
    \end{aligned}
\end{equation}
Direct calculations give the adjoint to $\dA$ operator
\begin{equation}\label{e-109-Astar}
    \dA^* f=\left(
            \begin{array}{cc}
              i\frac{d}{dt} & 0 \\
              0 & i\frac{d}{dt} \\
            \end{array}
          \right)\left[
                   \begin{array}{c}
                     f_1 \\
                     f_2 \\
                   \end{array}
                 \right],\quad f=\left[
                   \begin{array}{c}
                     f_1 \\
                     f_2 \\
                   \end{array}
                 \right]\in\dom(\dA^*),
\end{equation}
where
\begin{equation}\label{e-109-dA-star}
    \dom(\dA^*)=\left\{ f=\left[
                   \begin{array}{c}
                     f_1 \\
                     f_2 \\
                   \end{array}
                 \right],\; e^\gamma f_1(\gamma)-f_1(0)+ \sqrt{e^{2\gamma}-1} f_2(\gamma)=0 \right\}.
\end{equation}
In order to find the deficiency vectors $G_+$ and $G_-$ of $\dA$ we use \eqref{GGG}where (see \cite{MT10})
$$
\tan\alpha=\frac{1}{\kappa_2}
\sqrt{\frac{1-\kappa_2^2}{1-\kappa_1^2}}=\sqrt{e^{2(\ell-\gamma)}-1}.
$$
The above value gives
$$
\cos\alpha=e^{\gamma-\ell},\quad \sin\alpha=\frac{\sqrt{e^{2(\ell-\gamma)}-1}}{e^{\ell-\gamma}},
$$
and consequently
\begin{equation}\label{e-G+G-}
    G_+=\left[
          \begin{array}{c}
            \frac{\sqrt2 e^{\gamma-\ell}}{\sqrt{e^{2\gamma}-1}} \\
            \\
            -\sqrt2 e^{-\ell} \\
          \end{array}
        \right]e^t,\quad
        G_-=\left[
          \begin{array}{c}
            \frac{\sqrt2 e^{\gamma}}{\sqrt{e^{2\gamma}-1}} \\
            \\
            0 \\
          \end{array}
        \right]e^{-t}.
\end{equation}
Using the description for $\dom(\dA^*)$ we also find a general deficiency vector
\begin{equation}\label{e-Gz}
    G_z=\left[
          \begin{array}{c}
            \sqrt{e^{2\gamma}-1} \\
            \\
            e^{iz\gamma}-e^\gamma \\
          \end{array}
        \right]e^{-izt}.
\end{equation}
Having $G_+$ and $G_-$ written explicitly in \eqref{e-G+G-} we can construct the reference self-adjoint extension $\hat A$ of $\dA$ that will become the quasi-kernel of the real part
of the state-space operator  $\bA$ of the coupling system $\Theta$ we are building. We use the condition $G_+-G_-\in\dom(\hat A)$ and derive the boundary conditions taking into account that $\dom(\hat A)\subset\dom(\dA^*)$. This yields
\begin{equation}\label{e-hatA}
        \hat A f=\left(
            \begin{array}{cc}
              i\frac{d}{dt} & 0 \\
              0 & i\frac{d}{dt} \\
            \end{array}
          \right)\left[
                   \begin{array}{c}
                     f_1 \\
                     f_2 \\
                   \end{array}
                 \right],\quad f=\left[
                   \begin{array}{c}
                     f_1 \\
                     f_2 \\
                   \end{array}
                 \right]\in\dom(\hat A),
\end{equation}
where
\begin{equation}\label{e-dom-hatA}
    \begin{aligned}
    \dom(\hat A)&=\left\{ f=\left[
                   \begin{array}{c}
                     f_1 \\
                     f_2 \\
                   \end{array}
                 \right]\in\dom(\dA^*),\; \sqrt{e^{2\gamma}-1} f_1(\gamma)+e^\gamma f_2(\gamma)=f_2(\ell) ,\right.\\
                 &\left. \qquad \, \sqrt{e^{2\gamma}-1}f_2(\ell)+f_1(\gamma)=e^{\gamma}f_1(0) \right\}.
    \end{aligned}
\end{equation}
Now we use \eqref{e-68-new} to define $T=T_1\uplus T_2$. Here
\begin{equation}\label{e-TT}
        T f=\left(
            \begin{array}{cc}
              i\frac{d}{dt} & 0 \\
              0 & i\frac{d}{dt} \\
            \end{array}
          \right)\left[
                   \begin{array}{c}
                     f_1 \\
                     f_2 \\
                   \end{array}
                 \right],\quad f=\left[
                   \begin{array}{c}
                     f_1 \\
                     f_2 \\
                   \end{array}
                 \right]\in\dom(T),
\end{equation}
where
\begin{equation*}\label{e-domTT}
    \dom(T)=\left\{ f=\left[
                   \begin{array}{c}
                     f_1 \\
                     f_2 \\
                   \end{array}
                 \right],\;  f_1(\gamma)=e^\gamma f_1(0), f_2(\gamma)=-e^{-\gamma} \sqrt{e^{2\gamma}-1}\, f_1(\gamma) \right\}.
\end{equation*}
By direct calculations we also get
\begin{equation}\label{e-TTstar}
        T^* f=\left(
            \begin{array}{cc}
              i\frac{d}{dt} & 0 \\
              0 & i\frac{d}{dt} \\
            \end{array}
          \right)\left[
                   \begin{array}{c}
                     f_1 \\
                     f_2 \\
                   \end{array}
                 \right],\quad f=\left[
                   \begin{array}{c}
                     f_1 \\
                     f_2 \\
                   \end{array}
                 \right]\in\dom(T),
\end{equation}
where
\begin{equation}\label{e-domTTstar}
     \begin{aligned}
    \dom(T^*)&=\left\{f=\left[
                   \begin{array}{c}
                     f_1 \\
                     f_2 \\
                   \end{array}
                 \right],\;  e^\gamma f_1(\gamma)- f_1(0)+\sqrt{e^{2\gamma}-1}f_2(\gamma)=0,\right.
                 \\&\left. f_2(\ell)=0 \right\}.
    \end{aligned}
\end{equation}
Clearly, $\dA\subset T\subset \dA^*$ and $\dA\subset T^*\subset \dA^*$.

We know that according to \eqref{mult-for} the von Neumann parameter $\kappa$ of $T$ is
$$
\kappa=\kappa_1\cdot\kappa_2=0\cdot e^{-\ell+\gamma}=0.
$$
 We will use this $T$ to construct the coupling $\Theta$ of our L-systems $\Theta_1$ and $\Theta_2$. First, we need to write out the formulas for the $(*)$-extension $\bA$ of $T$ and its adjoint  $\bA^*$ such that $\RE\bA$ has the quasi-kernel $\hat A$. In order to do that we use the general formulas describing $(*)$-extensions of a differential operator (see \cite[Theorem 10.1.1]{ABT}).  After evaluating the appropriate coefficients we obtain
\begin{equation*}\label{e-bA}
        \bA f=\left(
            \begin{array}{cc}
              i\frac{d}{dt} & 0 \\
              0 & i\frac{d}{dt} \\
            \end{array}
          \right)\left[
                   \begin{array}{c}
                     f_1 \\
                     f_2 \\
                   \end{array}
                 \right]
                -i(e^\gamma f_1(0)+(e^\gamma\sqrt{e^{2\gamma}-1}-1)f_1(\gamma)+e^{2\gamma}f_2(\gamma))\chi
\end{equation*}
and its adjoint
\begin{equation*}\label{e-bA-star}
    \begin{aligned}
        \bA^* f&=\left(
            \begin{array}{cc}
              i\frac{d}{dt} & 0 \\
              0 & i\frac{d}{dt} \\
            \end{array}
          \right)\left[
                   \begin{array}{c}
                     f_1 \\
                     f_2 \\
                   \end{array}
                 \right]-i(\sqrt{e^{2\gamma}-1} f_1(0)-e^\gamma\sqrt{e^{2\gamma}-1}f_1(\gamma)\\
&-(e^{2\gamma}-1)f_2(\gamma)-(\sqrt{e^{2\gamma}-1}+e^\gamma)f_2(\ell))\chi,
\end{aligned}
\end{equation*}
where
\begin{equation}\label{e-chi}
    \chi=\frac{1}{\sqrt2}\left[
                   \begin{array}{c}
                     (\sqrt{e^{2\gamma}-1}-e^\gamma)\delta(t)+\delta(t-\gamma) \\
                     \delta(t-\gamma)+(\sqrt{e^{2\gamma}-1}+e^\gamma)\delta(t-\ell) \\
                   \end{array}
                 \right],
\end{equation}
and $f=\left[
                   \begin{array}{c}
                     f_1 \\
                     f_2 \\
                   \end{array}\right]\in\calH_+=\dom(\dA^*)$.
At this point we have all the ingredients to compose the coupling
\begin{equation*}
\Theta= \Theta_1\cdot\Theta_2=
\begin{pmatrix}
\bA &K &1\\
&&\\
\calH_+\subset L^2_{[0,\gamma]} \oplus L^2_{[\gamma,\ell]}\subset \calH_- &{ } &\dC
\end{pmatrix},
\end{equation*}
where operator $Kc=c\cdot\chi$ and $\chi$ is given by \eqref{e-chi}.

Now we have enough data to find $s(z)=s(\dA,\hat A)(z)$, where $\hat A$ is the self-adjoint extension $A$ of $\dA$ uniquely defined
by \eqref{refop}. Hence, \eqref{charsum}  after some tedious calculations yields for $z\in \bbC_+$
\begin{equation}\label{e-sAA}
s(z)=\frac{z-i}{z+i}\cdot \frac{(G_z, G_-)}{(G_z, G_+)}=e^{iz\ell}\cdot\frac{e^{-iz\ell}-e^\gamma}{e^\gamma-e^{iz\ell}}.
\end{equation}
We know that since $\kappa=0$ then (see \cite{MT-S}, \cite{BMkT}) $s(z)=-S(z)(=-S(\dA,T,\hat A)(z))$. Let $s_1(z)=s_1(\dA_1,\hat A_1)(z)$, $S_1(z)=S_1(\dA_1,T_1,\hat A_1)(z)$, and
$S_2(z)=$ \break $S_2(\dA_2,T_2,\hat A_2)(z)$.
 Then, using \eqref{e1-1}, \eqref{e-84-new}, and \eqref{e-sAA} give
$$
\begin{aligned}
s(z)&=-S(z)=-S_1(z)\cdot S_2(z)=s_1(z)\cdot S_2(z)=s_1(z)\cdot \frac{1}{W_{\Theta_2}(z)}\\
&=\frac{e^\gamma-e^{-i\gamma z}}{1-e^\gamma e^{-i\gamma z}}\cdot e^{-i(\ell-\gamma) z}=\frac{1}{W_{\Theta_1}(z)}\cdot \frac{1}{W_{\Theta_2}(z)}.
\end{aligned}
$$
This confirms that $W_\Theta(z)=W_{\Theta_1}(z)\cdot W_{\Theta_2}(z)$ and illustrates Theorem \ref{t6-7-3}. Moreover, this example also confirms the ``absorbtion" property of the coupling mentioned in Corollaries \ref{c-29} and \ref{c-30}. Clearly, the fact that $\kappa_1=0$ in the L-system $\Theta_1$ caused $\kappa=0$ in the coupling L-system $\Theta$ and as a result the impedance function of the coupling $V_\Theta(z)$ belongs to the Donoghue class $\sM$.

\appendix
\section{Functional model of a prime  triple}\label{A1}

In this Appendix we are going to explain the construction of a  functional model for a prime  dissipative triple.\footnote{We call a triple $(\dot A, \whA , A)$
 a prime triple if $\dot A$ is a prime symmetric operator.} parameterized by the characteristic function and obtained in \cite{MT-S}.

Given a contractive, analytic in $\bbC_+$ function $s$ that satisfies the Liv\v{s}ic criterion \cite[Theorem 15]{L}, introduce the function
$$
M(z)=\frac1i\cdot\frac{s(z)+1}{s(z)-1},\quad z\in \bbC_+,
$$
so that
$$
M(z)=\int_\bbR \left
(\frac{1}{\lambda-z}-\frac{\lambda}{1+\lambda^2}\right )
d\mu(\lambda), \quad z\in \bbC_+,
$$
for some infinite Borel measure with
$$
\int_\bbR\frac{d\mu(\lambda)}{1+\lambda^2}=1.
$$

In the Hilbert space $L^2(\bbR;d\mu)$ introduce
  the multiplication (self-adjoint)
operator by the  independent variable $\cB$
 on
\begin{equation}\label{nacha1}
\dom(\cB)=\left \{f\in \,L^2(\bbR;d\mu) \,\bigg | \, \int_\bbR
\lambda^2 | f(\lambda)|^2d \mu(\lambda)<\infty \right \},
\end{equation} denote by  $\dot \cB$  its
 restriction
on
\begin{equation}\label{nacha2}
\dom(\dot \cB)=\left \{f\in \dom(\cB)\, \bigg | \, \int_\bbR
f(\lambda)d \mu(\lambda) =0\right \},
\end{equation}
and let
 $\whB $ be   the dissipative restriction of the operator  $(\dot \cB)^*$
 on
\begin{equation}\label{nacha3}
\dom(\whB )=\dom (\dot \cB)\dot +\linspan\left
\{\,\frac{1}{\cdot -i}- S(i)\frac{1}{\cdot +i}\right \}.
\end{equation}

We will refer to the triple  $(\dot \cB,   \whB ,\cB)$ as {\it  the model  triple } in the Hilbert space $L^2(\bbR;d\mu)$.
It was established in \cite{MT-S} that a triple $(\dot A,\whA ,A)$ with the characteristic function $S$ is unitarily equivalent to the model triple
$(\dot \cB,   \whB ,\cB)$ in the Hilbert space $L^2(\bbR;d\mu)$ whenever the underlying symmetric operator $\dot A$ is prime.
The triple $(\dot \cB,   \whB ,\cB)$ will therefore be called {\it the functional model} for $(\dot A,A,\whA )$.

For the resolvents of the model dissipative operator $\whB$ and the self-adjoint (reference) operator $\cB$ from the model  triple $(\dot \cB, \whB , \cB)$ in the Hilbert space
$L^2(\bbR;d\mu) $ one gets  the following resolvent formula (see \cite{MT-S}).
\begin{equation}\label{e-46-res-form-1}
(\whB - zI )^{-1}=(\cB- zI )^{-1}-p(z)(\cdot\, ,g_{\overline{z}})g_z,
\end{equation}
with
\begin{equation}\label{e-47-res-form-2}
p(z)=\left (M(\dot \cB, \cB)(z)+i\frac{\kappa+1}{\kappa-1}\right )^{-1},\quad z\in\rho(\whB )\cap \rho(\cB).
\end{equation}
Here $M(\dot \cB,\cB)$ is the Weyl-Titchmarsh function associated with the pair  $(\dot \cB, \cB)$ continued to the lower half-plane by the Schwarz reflection
principle, and the deficiency elements $g_z$ are given by
\begin{equation}\label{e-52-def}
g_z(\lambda)=\frac{1}{\lambda-z}, \quad
\,\, \text{$\mu$-a.e. }.
\end{equation}
The same resolvent formula takes place \cite{MT-S} for a given  triple $(\dot A, \whA , A)$ satisfying  Hypothesis \ref{setup}.


\end{document}